    \renewcommand*{\bm}[1]{#1}%
\def\rank{\mbox{\rm rank}}
\def\NBC{{\rm NBC}}
\def\span{\mbox{\rm span}}
\def\min{\mbox{\rm min}}
\def\max{\mbox{\rm max}}
\def\And{\mbox{\rm ~and~}}
\def\If{\mbox{\rm ~if~}}
\def\For{\mbox{\rm ~for~}}
\def\i{\mbox{\rm (\hspace{0.2mm}i\hspace{0.2mm})}\,}
\def\ii{\mbox{\rm (\hspace{-0.1mm}i\hspace{-0.2mm}i\hspace{-0.1mm})}\,}
\def\Sum{\textstyle\sum\limits}
\def\({\mbox{\rm (}}\def\){\mbox{\rm )}}
\newcommand{\Rmnum}[1]{\expandafter\@slowromancap\romannumeral #1@}
\newtheorem{theorem}{Theorem}[section]
\newaliascnt{lemma}{theorem}
\newtheorem{lemma}[lemma]{Lemma}
\newaliascnt{proposition}{theorem}
\newaliascnt{fact}{theorem}
\newaliascnt{definition}{theorem}
\newaliascnt{conjecture}{theorem}
\newaliascnt{corollary}{theorem}
\newtheorem{corollary}[corollary]{Corollary}
\newaliascnt{claim}{theorem}
\newaliascnt{problem}{theorem}
\newaliascnt{remark}{theorem}
\newaliascnt{example}{theorem}
\newtheorem{example}[example]{Example}
\begin{document}
\normalsize
\title{{\bf One-element Extensions of Hyperplane Arrangements}}
\author{Hang Cai\\
\small School of Mathematics, Hunan University, Changsha, China\\
\small caihang@hnu.edu.cn
\and
Houshan Fu\thanks{Corresponding author}\\
\small School of Mathematics and Information Science\\
\small Guangzhou University, Guangzhou, China\\
\small fuhoushan@gzhu.edu.cn
\and
Suijie Wang\thanks{Supported by 12171487}\\
\small School of Mathematics\\
\small Hunan University, Changsha, China\\
\small  wangsuijie@hnu.edu.cn}
\date{}
\maketitle
\captionsetup[figure]{labelformat=empty}
\begin{abstract}
We classify one-element extensions of a hyperplane arrangement by the induced adjoint arrangement. Based on the classification, several kinds of combinatorial invariants including Whitney polynomials, characteristic polynomials, Whitney numbers and face numbers, are constants on those strata associated with the induced adjoint arrangement, and also order-preserving with respect to the intersection lattice of the induced adjoint arrangement.
As a byproduct, we obtain a convolution formula on the characteristic polynomials $\chi(\mathcal{A}+H_{\bm\alpha,a},t)$ when $\mathcal{A}$ is defined over a finite field $\mathbb{F}_q$ or a rational arrangement.
\vspace{1ex}\\
\noindent{\small {\bf Keywords:}} Hyperplane arrangement; Whitney polynomial; Whitney number; No broken circuit \vspace{1ex}\\
MSC classes: 52C35.
\end{abstract}
\section{Introduction}\label{SEC1}
\paragraph{1.1. Main purposes and background.}
The starting point of this paper is coming from Fu-Wang's work in \cite{Fu-Wang2021}, which has characterized the classification of the linear one-element extensions of a linear arrangement by the adjoint arrangement. We shall extend their work to general hyperplane arrangement. Our main goal is to classify intersection semi-lattices of one-element extensions of a hyperplane arrangement,  and the related combinatorial invariants involving in Whitney polynomials, characteristic polynomials, Whitney numbers and face numbers. According to the classification, we further obtain that these combinatorial invariants are order-preserving with respect to the intersection lattice of the induced adjoint arrangement, and a convolution formula on the characteristic polynomials.

For this purpose, we shall fix our notation and introduce the definition of one-element extension of hyperplane arrangement. Throughout this paper, we use $[m]$, $\mathbb{Z}$, $\mathbb{F}_q$, $\mathbb{Q}$, $\mathbb{R}$ and $\mathbb{F}$ to denote the set of $\{1,2,\ldots,m\}$, the set of integers,  the finite field with $q$ elements, the field of rational numbers, the field of real numbers and a general field respectively. For any vectors $\bm \alpha\in\mathbb{F}^d, (\bm\alpha, a)\in\mathbb{F}^{d+1}$, the notations $H_{\bm\alpha}$ and $H_{\bm\alpha,a}$ always refer to the linear hyperplane $H_{\bm\alpha}:\bm\alpha\cdot\bm x=0$ and  the affine hyperplane $H_{\bm\alpha,a}:\bm\alpha\cdot\bm x=a$ in $\mathbb{F}^d$ respectively. In particular, $H_{\bm\alpha}=H_{\bm\alpha,0}$. Let $\mathcal{A}$ be a hyperplane arrangement in the $d$-dimensional vector space $\mathbb{F}^d$. The hyperplane arrangement
\[
\mathcal{A}+ H_{\bm\alpha,a}:=\mathcal{A}\cup\{ H_{\bm\alpha,a}\}
\]
in $\mathbb{F}^d$ is called a {\em one-element extension} of $\mathcal{A}$. In particular, the hyperplane arrangement $\mathcal{A}+H_{\bm\alpha}$ is called a {\em linear one-element extension} of $\mathcal{A}$. Note that all normal vectors of hyperplanes in $\mathcal{A}$ naturally define an $\mathbb{F}$-vector matroid $\mathcal{M}(\mathcal{A})$ (a matroid represented by vectors over $\mathbb{F}$). In matroid language, when $\mathcal{A}$ is a linear arrangement, a linear one-element extension of $\mathcal{A}$ is indeed an $\mathbb{F}$-representable extension of the $\mathbb{F}$-vector matroid $\mathcal{M}(\mathcal{A})$, which can be viewed as a kind of single-element extensions of a matroid. It was first introduced by Crapo \cite{Crapo1965} in 1965, which gives a characterization of all single-element extensions by the linear subclasses of a matroid.

\paragraph{1.2. Outline} The remainder of this paper is organized as follows. After collecting the necessary concepts in Subsection 2.1 and Subsection 2.2, we shall state our main results in Subsection 2.3. \autoref{SEC2} proves \autoref{classification-1} and further investigates the classifications of  several special types of one-element extensions, see \autoref{classification-2}, \autoref{classification-3} and \autoref{classification-4}. In \autoref{SEC3}, we give a proof of \autoref{comparison-1} and also establish the order-preserving relations of some combinatorial invariants associated with the several special types of one-element extensions, see \autoref{comparison-2}, \autoref{comparison-3} and \autoref{comparison-4}. \autoref{SEC4-0} shows \autoref{Convolution-Formula}. As an application of one-element extension of hyperplane arrangement, in \autoref{SEC4} we study the restrictions of a hyperplane arrangement, and give the classification of their intersection semi-lattices in \autoref{contraction} and the order-preserving relations of their Whitney numbers of both kinds and region numbers in \autoref{restriction1}.

\section{Preliminaries}\label{SEC2-0}
\paragraph{2.1. Basic concepts.}
Our terminology and notations on hyperplane arrangement refer to \cite{Stanley2007}. A {\em hyperplane arrangement} $\mathcal{A}$ is a finite collection of hyperplanes in a $d$-dimensional vector space $V$. In particular, $\mathcal{A}$ is called a  {\em linear arrangement} if all hyperplanes in $\mathcal{A}$ pass through the origin. The {\em intersection semi-lattice} $L(\mathcal{A})$  consists of all nonempty intersections of some hyperplanes in $\mathcal{A}$, ordered by the reverse inclusion and including $V=\bigcap_{H\in\emptyset}H$ as the minimal element. For $X\in L(\mathcal{A})$, the {\em localization} $\mathcal{A}_X$ of $\mathcal{A}$ at $X$ is defined as
\[
\mathcal{A}_X:=\{H\in\mathcal{A}\mid X\subseteq H\},
\]
and the {\em restriction} $\mathcal{A}/X$ is a hyperplane arrangement in $X$ given by
\[
\mathcal{A}/X:=\{H\cap X\ne\emptyset\mid  H\in \mathcal{A}\setminus\mathcal{A}_X\}.
\]
The {\em complement} of $\mathcal{A}$ is $M(\mathcal{A})=V\setminus\bigcup_{H\in \mathcal{A}}H$. It is clear that the whole space $V$ has the set partition
\begin{equation}\label{complement-partition}
V=\bigsqcup_{X\in L(\mathcal{A})}M(\mathcal{A}/X).
\end{equation}
Namely, for each point ${\bm x}\in V$, there is a unique element $X\in L(\mathcal{A})$ satisfying ${\bm x}\in M(\mathcal{A}/X)$, denoted by $X_{\bm x}$. Indeed, $X_{\bm x}$ is the inclusion-minimal element of $L(\mathcal{A})$ containing ${\bm x}$.

The {\em characteristic polynomial} $\chi(\mathcal{A},t)$ of $\mathcal{A}$ is
\begin{equation*}\label{Characteristic-Polynomial}
\chi(\mathcal{A},t):=\sum\limits_{X\in L(\mathcal{A})}\mu(V,X)t^{{\rm dim}(X)}= w_0^+(\mathcal{A})t^d- w_1^+(\mathcal{A})t^{d-1}+\cdots+(-1)^d w_d^+(\mathcal{A}),
\end{equation*}
where $\mu$ is the M\"obius function of $L(\mathcal{A})$. In particular, we assume $\chi(\mathcal{A},t)=0$ if the ambient space $V$ is a member of $\mathcal{A}$. Each unsigned coefficient $ w_k^+(\mathcal{A})$ is called the  {\em $k$-th unsigned Whitney number of the first kind}.  The number  $ W_k(\mathcal{A})$ of  members in $L(\mathcal{A})$ of codimension $k$ is called the {\em $k$-th Whitney number of the second kind}. As a generalization of characteristic polynomial, Zaslavsky \cite{Zaslavsky1975} introduced the {\em Whitney polynomial} $ w(\mathcal{A};s,t)$ of $\mathcal{A}$ as
\begin{equation}\label{Whitney-Polynomial}
 w(\mathcal{A};s,t):=\sum\limits_{X\le Y {\rm\,in\,} L(\mathcal{A})}\mu(X,Y)s^{d-{\rm dim}(X)}t^{{\rm dim}(Y)}=\sum_{0\le j\le i\le d}(-1)^jc_{  ij}(\mathcal{A})s^{d-i}t^{i-j}.
\end{equation}
Likewise, we assume $ w(\mathcal{A};s,t)=0$ if $V\in\mathcal{A}$. Clearly it can be written as
\begin{equation}\label{Whitney-Polynomial-I}
 w(\mathcal{A};s,t)=\sum_{  X\in L(\mathcal{A})}\chi(\mathcal{A}/X,t)\cdot s^{d-{\rm dim}(X)}.
\end{equation}
Indeed,  $\chi(\mathcal{A},t)=w(\mathcal{A};0,t)$.  The {\em doubly indexed Whitney number of the first kind} was proposed by Zaslavsky \cite{Zaslavsky1981} and defined as
\[
w_{ij}(\mathcal{A}):=\sum_{X\le Y {\rm\,in\,} L(\mathcal{A})}\{\mu(X,Y):{\rm codim}(X)=i,\;{\rm codim}(Y)=j\}.
\]
More information about it,  see \cite{Greene-Zaslavsky1983,Zaslavsky2002}. Actually, each $c_{ij}(\mathcal{A})$ equals $(-1)^jw_{d-i,d+j-i}(\mathcal{A})$.

When $V$ is a Euclidean space,  $M(\mathcal{A})$ consists of finitely many connected components, called {\em regions} of $\mathcal{A}$. Denote by $r(\mathcal{A})$ the number of regions of $\mathcal{A}$. The celebrated Zaslavsky's formula \cite{Zaslavsky1975} states $r(\mathcal{A})=(-1)^d\chi(\mathcal{A},-1)$. Let $X\in L(\mathcal{A})$ be an affine subspace of dimension $k$. We call each region of $\mathcal{A}/X$ a {\em $k$-dimensional face of $\mathcal{A}$}. Denote by $f_k(\mathcal{A})$ the number of $k$-dimensional faces of $\mathcal{A}$. Then Zaslavsky's formula directly yields the consequence
\[
f_k(\mathcal{A})=\sum_{X\in L(\mathcal{A}),\,\dim(X)=k}(-1)^{k}\chi(\mathcal{A}/X,-1).
\]
In particular, $f_d(\mathcal{A})=r(\mathcal{A})$.

\paragraph{2.2. Induced adjoint arrangement.} To obtain our results, we shall introduce adjoint arrangement and induced adjoint arrangement. Unless otherwise stated, we always regard $\mathcal{A}$  as the hyperplane arrangement $\mathcal{A}:=\big\{H_{\bm \alpha_i,a_i}\mid i\in[m]\big\}$ in $\mathbb{F}^d$ later. The {\em linearization} of $\mathcal{A}$  is a linear arrangement $\mathcal{A}^o$ in $\mathbb{F}^d$ defined to be
\[
\mathcal{A}^o:=\big\{H_{\bm \alpha_i}\mid i\in[m]\big\}.
\]
When $\mathcal{A}$ is essential, i.e., the whole space $\mathbb{F}^d$ is exactly spanned by the normal vectors ${\bm\alpha}_1,{\bm\alpha}_2,\ldots,{\bm\alpha}_m$, then $\mathcal{A}^o$ is also essential. In this case, we always assume that
\begin{eqnarray*}
L_{0}(\mathcal{A}):=\{{\bm v}_1,{\bm v}_2,\ldots,{\bm v}_{n_0}\}\quad\And\quad
L_1(\mathcal{A}^o):=\{\mathbb{F}{\bm u}_1,\mathbb{F}{\bm u}_2,\ldots,\mathbb{F}{\bm u}_{n_1}\}
\end{eqnarray*}
consist of $0$-dimensional members of $L(\mathcal{A})$ and $1$-dimensional members of $L(\mathcal{A}^o)$ later respectively (unless otherwise stated), where each $\mathbb{F}{\bm u}_i$ denotes the subspace spanned by ${\bm u}_i$. The {\em adjoint arrangement} $\sigma\mathcal{A}^o$ of $\mathcal{A}^o$ is a linear arrangement in $\mathbb{F}^d$ defined as
\begin{equation}\label{adjoint-arrangement}
\sigma\mathcal{A}^o:=\big\{H_{\bm u_i}:\bm u_i\cdot\bm x=0\mid i\in[n_1]\big\},
\end{equation}
which was first constructed by Bixby-Coullard \cite{Bixby-Coullard1988} for vector matroids in 1988. Associated with $L_0(\mathcal{A})$ and $L_1(\mathcal{A}^o)$, the {\em induced adjoint arrangement} $\tilde{\mathcal{A}}$ in $\mathbb{F}^{d+1}$  is defined as
\begin{equation}\label{extension-adjoint-arrangement}
\tilde{\mathcal{A}}:=\big\{H_{\bm\tilde{\bm v}_i}:\tilde{\bm v}_i\cdot{\bm x}=0\mid i\in[n_0]\big\}\cup\big\{H_{\tilde{\bm u}_i}:\tilde{\bm u}_i\cdot{\bm x}=0\mid i\in[n_1]\big\},
\end{equation}
where $\tilde{{\bm v}}_i=({\bm v}_i,-1)$ and $\tilde{{\bm u}}_i=({\bm u}_i,0)$.
For convenience, let
\begin{equation}\label{A_01}
\tilde{\mathcal{A}}_0:=\big\{H_{\tilde{\bm v}_i}:\tilde{\bm v}_i\cdot{\bm x}=0\mid i\in[n_{0}]\big\}\quad\And \quad\tilde{\mathcal{A}}_1:=\big\{H_{\tilde{\bm u}_i}:\tilde{\bm u}_i\cdot{\bm x}=0\mid i\in[n_{1}]\big\}.
\end{equation}

\paragraph{2.3. Main results.}
Suppose $\mathcal{A}$ is essential. For any $(\bm\alpha, a)\in\mathbb{F}^{d+1}$, the  one-element extension $\mathcal{A}+H_{\bm\alpha,a}$ is obtained from $\mathcal{A}$ by adding a hyperplane $H_{\bm\alpha,a}$ to $\mathcal{A}$. Note from \eqref{complement-partition} that $\tilde{\mathcal{A}}$ makes a partition $\mathbb{F}^{d+1}=\bigsqcup_{X\in L(\tilde{\mathcal{A}})}M(\tilde{\mathcal{A}}/X)$, which will classify the intersection semi-lattices $L(\mathcal{A}+H_{\bm\alpha,a})$ for all $(\bm\alpha,a)\in\mathbb{F}^{d+1}$. In particular, $L\big(\mathcal{A}+ H_{\bm0,a}\big)=L(\mathcal{A})$ for $a\in\mathbb{F}\setminus\{0\}$.
\begin{theorem}\label{classification-1}
Suppose $\mathcal{A}$ is essential. Given $X\in L(\tilde{\mathcal{A}})$. For any vectors $({\bm\alpha},a),({\bm\alpha}',a')\in M(\tilde{\mathcal{A}}/X)$, we have
\[ L\big(\mathcal{A}+ H_{\bm\alpha,a}\big)\cong L\big(\mathcal{A}+ H_{\bm\alpha',a'}\big).
\]
\end{theorem}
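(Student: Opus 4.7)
The plan is to exhibit an explicit poset isomorphism $\varphi\colon L(\mathcal{A}+H_{\bm\alpha,a})\to L(\mathcal{A}+H_{\bm\alpha',a'})$ which fixes $L(\mathcal{A})$ and sends each new element in a canonical way. First, I would observe that every $Y\in L(\mathcal{A}+H_{\bm\alpha,a})\setminus L(\mathcal{A})$ is uniquely expressible as $Y=Z\cap H_{\bm\alpha,a}$, where $Z=\bigcap_{H\in\mathcal{A},\,Y\subseteq H}H$ is the smallest element of $L(\mathcal{A})$ containing $Y$. Consequently the new elements are in bijection with
\[
\mathcal{Z}(\bm\alpha,a):=\{Z\in L(\mathcal{A}):Z\cap H_{\bm\alpha,a}\text{ is a proper nonempty subspace of }Z\text{ and }Z\cap H_{\bm\alpha,a}\notin L(\mathcal{A})\}.
\]
The isomorphism will then be defined by $\varphi(Y)=Y$ for $Y\in L(\mathcal{A})$ and $\varphi(Z\cap H_{\bm\alpha,a})=Z\cap H_{\bm\alpha',a'}$ for $Z\in\mathcal{Z}(\bm\alpha,a)$, provided one proves $\mathcal{Z}(\bm\alpha,a)=\mathcal{Z}(\bm\alpha',a')$.

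The heart of the argument is to rewrite the two conditions defining $\mathcal{Z}(\bm\alpha,a)$ purely in terms of the stratum invariants $S_0:=\{i\in[n_0]:\bm v_i\in H_{\bm\alpha,a}\}=\{i:(\bm\alpha,a)\in H_{\tilde{\bm v}_i}\}$ and $S_1:=\{j\in[n_1]:\mathbb{F}\bm u_j\subseteq H_{\bm\alpha}\}=\{j:(\bm\alpha,a)\in H_{\tilde{\bm u}_j}\}$, which agree for $(\bm\alpha,a)$ and $(\bm\alpha',a')$ by the hypothesis that they lie in $M(\tilde{\mathcal{A}}/X)$. The intersection $Z\cap H_{\bm\alpha,a}$ is proper and nonempty iff $Z^o\not\subseteq H_{\bm\alpha}$; since $\mathcal{A}^o$ is essential, the classical matroid fact that every flat is the intersection of matroid hyperplanes (here the copoints of $\mathcal{M}(\mathcal{A}^o)$ are exactly the elements of $L_1(\mathcal{A}^o)$) yields the identity $Z^o=\Span\{\mathbb{F}\bm u_j\in L_1(\mathcal{A}^o):\mathbb{F}\bm u_j\subseteq Z^o\}$, so the condition becomes ``some $\mathbb{F}\bm u_j\subseteq Z^o$ has $j\notin S_1$''. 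For the second clause, since $Z\cap H_{\bm\alpha,a}$ has codimension one in $Z$, the statement $Z\cap H_{\bm\alpha,a}\in L(\mathcal{A})$ is equivalent to the existence of $Z'\in L(\mathcal{A})$ with $Z'\subsetneq Z$, $\dim Z'=\dim Z-1$, and $Z'\subseteq H_{\bm\alpha,a}$. Essentiality implies each such $Z'$ contains at least one $\bm v_i\in L_0(\mathcal{A})$, so the inclusion $Z'\subseteq H_{\bm\alpha,a}$ reduces to the stratum-invariant conjunction of $Z'^o\subseteq H_{\bm\alpha}$ (handled by the same spanning argument applied to $Z'^o$) and $i\in S_0$ for some $\bm v_i\in Z'$. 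Both ingredients are stratum-invariant, so $\mathcal{Z}(\bm\alpha,a)=\mathcal{Z}(\bm\alpha',a')$.

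The last step is to verify that $\varphi$ preserves order in both directions, which splits into three routine cases. Between elements of $L(\mathcal{A})$ the relation is already fixed; between two new elements $Y_\ell=Z_\ell\cap H_{\bm\alpha,a}$, we have $Y_1\subseteq Y_2$ iff $Z_1\subseteq Z_2$ by the minimality of each $Z_\ell$; and between a new $Y=Z\cap H_{\bm\alpha,a}$ and $W\in L(\mathcal{A})$, one has $Y\subseteq W$ iff $Z\subseteq W$, while $W\subseteq Y$ iff $W\subseteq Z$ together with the stratum-invariant condition $W\subseteq H_{\bm\alpha,a}$. The main obstacle is the second clause in the definition of $\mathcal{Z}(\bm\alpha,a)$: showing that ``$Z\cap H_{\bm\alpha,a}$ is itself already in $L(\mathcal{A})$'' admits a purely combinatorial translation into the covering data of $L(\mathcal{A})$ and membership in $S_0,S_1$. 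This is precisely where essentiality is indispensable, both for the spanning of each $Z^o$ by the $1$-dimensional elements of $L_1(\mathcal{A}^o)$ contained in it, and for the existence of a $0$-dimensional point of $L_0(\mathcal{A})$ in every element of $L(\mathcal{A})$.
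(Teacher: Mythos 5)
Your proposal is correct in substance and follows the same overall strategy as the paper --- an explicit order isomorphism that fixes $L(\mathcal{A})$ and matches each new flat $Z\cap H_{\bm\alpha,a}$ with $Z\cap H_{\bm\alpha',a'}$ through its minimal old flat $Z$ --- but you implement the key invariance step differently. The paper funnels everything through two asymmetric lemmas: a rank-comparison lemma (\autoref{lemma1}), proved by passing to the adjoint arrangement $\sigma\mathcal{A}^o$ and the normal spaces $Z^{\perp}\in L(\sigma\mathcal{A}^o)$, and a vertex lemma (\autoref{lemma0}) on $H_{\bm\alpha',a'}\cap L_0(\mathcal{A})$, and then rules out $\iota(Z)\in L(\mathcal{A})$ by a contradiction argument combining the two. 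You instead observe that the stratum data $S_0,S_1$ (which vertices of $L_0(\mathcal{A})$ lie on the added hyperplane, which lines of $L_1(\mathcal{A}^o)$ lie in its linearization) coincide for $(\bm\alpha,a)$ and $(\bm\alpha',a')$ in $M(\tilde{\mathcal{A}}/X)$, and you show that membership in $\mathcal{Z}(\bm\alpha,a)$ and all order relations are functions of $(S_0,S_1)$, with a clean codimension-one-subflat characterization of when $Z\cap H_{\bm\alpha,a}$ is already in $L(\mathcal{A})$; this is the primal counterpart of the paper's use of $Z^{\perp}$, and for the symmetric situation of this theorem it is arguably tidier (though note the paper's asymmetric lemmas are reused later for \autoref{comparison-1}, which your symmetric invariance would not directly give). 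Two small points need tightening: the spanning identity $Z^o=\Span\{\bm u_j:\mathbb{F}\bm u_j\subseteq Z^o\}$ is true, but it does not follow immediately from ``every flat is an intersection of matroid hyperplanes''; the quick proof is that the restriction of the essential central arrangement $\mathcal{A}^o$ to the flat $Z^o$ is again essential, and a dual-basis argument then produces $\dim Z^o$ independent lines of $L_1(\mathcal{A}^o)$ inside $Z^o$. Likewise, you should record the one-line argument that every $Z\in\mathcal{Z}(\bm\alpha,a)$ is automatically the minimum element of $L(\mathcal{A})$ containing $Z\cap H_{\bm\alpha,a}$ (otherwise that intersection would lie in $L(\mathcal{A})$), which is what makes your correspondence between new flats and $\mathcal{Z}(\bm\alpha,a)$ a genuine bijection and your map $\varphi$ well defined.
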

In fact, when $\mathcal{A}$ is not essential, the classification of the one-element extensions can be reduced to  classifying $\mathcal{A}+H_{\bm\alpha,a}$ for $(\bm\alpha,a)\in O\times\mathbb{F}$ ($O$ is the space spanned by the normal vectors $\bm\alpha_1,\ldots,\bm\alpha_m$ of hyperplanes in $\mathcal{A}$), which will be explained at the end of Subsection 3.1.

According to \autoref{classification-1}, each invariant (unsigned Whitney number of the first kind, Whitney number of the second kind, face number, etc.) of every one-element extension of a hyperplane arrangement can be regarded as a function on the intersection lattice of the induced adjoint arrangement. Next we will further establish the order-preserving relations of these combinatorial invariants with respect to $L(\tilde{\mathcal{A}})$, including the unsigned coefficients $c_{  ij}(\mathcal{A})$ of the  Whitney polynomial, the unsigned Whitney numbers $w_i^+(\mathcal{A})$ of the first kind, the Whitney numbers $ W_i(\mathcal{A})$ of the second kind, the face numbers $f_i(\mathcal{A})$ and the region number $r(\mathcal{A})$. For convenience, we use an united notation  ${\rm INV}(\mathcal{A})$ to denote any invariant among $c_{  ij}(\mathcal{A}), w_i^+(\mathcal{A}), W_i(\mathcal{A}),f_i(\mathcal{A})$, $r(\mathcal{A})$ for all $0\le j\le i\le d$. For example, if $\mathcal{A}$ and $\mathcal{A}'$ are two hyperplane arrangements in $\mathbb{F}^d$, the notation ${\rm INV}(\mathcal{A})\le {\rm INV}(\mathcal{A}')$ means $c_{  ij}(\mathcal{A})\le c_{  ij}(\mathcal{A}'), w_i^+(\mathcal{A})\le w_i^+(\mathcal{A}'), W_i(\mathcal{A})\le W_i(\mathcal{A}'),f_i(\mathcal{A})\le f_i(\mathcal{A}')$, $r(\mathcal{A})\le r(\mathcal{A}')$ for all $j\le i$.
\begin{theorem}\label{comparison-1}
Suppose $\mathcal{A}$ is essential. Given $(\bm\alpha,a),(\bm\alpha',a')\in\mathbb{F}^{d+1}$. If $X_{(\bm\alpha,a)}\subseteq X_{(\bm\alpha',a')}$ for $X_{(\bm\alpha,a)}, X_{(\bm\alpha',a')}\in L(\tilde{\mathcal{A}})$, then
\[
{\rm INV}(\mathcal{A}+H_{\bm\alpha,a})\le{\rm INV}(\mathcal{A}+H_{\bm\alpha',a'}).
\]
\end{theorem}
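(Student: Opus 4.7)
The plan is to combine \autoref{classification-1} with a chain-decomposition argument in $L(\tilde{\mathcal{A}})$ to reduce to a single covering step, then to construct an order- and codimension-preserving injection of intersection semi-lattices; the set-cardinality invariants follow at once, while the M\"obius-based invariants $w_i^+(\mathcal{A})$ and $c_{ij}(\mathcal{A})$ require an extra argument which I expect to run through the $\NBC$ model of unsigned Whitney numbers and the weak-map behaviour of matroids.

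First, by \autoref{classification-1}, every invariant in ${\rm INV}(\mathcal{A}+H_{\bm\alpha,a})$ depends only on the stratum in $L(\tilde{\mathcal{A}})$, so I may freely replace $(\bm\alpha,a)$ and $(\bm\alpha',a')$ by any representatives of their respective strata. The hypothesis $X_{(\bm\alpha,a)} \subseteq X_{(\bm\alpha',a')}$ is exactly $X_{(\bm\alpha,a)} \ge X_{(\bm\alpha',a')}$ in the reverse-inclusion order on $L(\tilde{\mathcal{A}})$; picking a saturated chain between them and using transitivity reduces the problem to the case where $X_{(\bm\alpha',a')}$ is covered by $X_{(\bm\alpha,a)}$. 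Unfolding the definitions \eqref{extension-adjoint-arrangement}--\eqref{A_01}, the stratum $M(\tilde{\mathcal{A}}/X_{(\bm\alpha,a)})$ is cut out from $M(\tilde{\mathcal{A}}/X_{(\bm\alpha',a')})$ by exactly one additional condition of the form $\tilde{\bm v}_i\cdot(\bm\alpha,a)=0$ or $\tilde{\bm u}_j\cdot(\bm\alpha,a)=0$, i.e.\ one extra $0$-flat $\bm v_i \in L_0(\mathcal{A})$ placed on $H_{\bm\alpha,a}$, or one extra $1$-flat $\mathbb{F}\bm u_j \in L_1(\mathcal{A}^o)$ placed parallel to $H_{\bm\alpha,a}$. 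I would then take $(\bm\alpha',a') = (\bm\alpha,a) + (\bm\gamma,c)$ for $(\bm\gamma,c) \in X_{(\bm\alpha',a')}\setminus X_{(\bm\alpha,a)}$, so that $H_{\bm\alpha',a'}$ is a small deformation of $H_{\bm\alpha,a}$ relaxing exactly that single incidence.

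Using the decomposition
\[
L(\mathcal{A}+H_{\bm\alpha,a}) = L(\mathcal{A}) \cup \{Y \cap H_{\bm\alpha,a} : Y \in L(\mathcal{A}),\, Y\cap H_{\bm\alpha,a}\ne\emptyset,\, Y\not\subseteq H_{\bm\alpha,a}\},
\]
I would map each new flat $Y \cap H_{\bm\alpha,a}$ to $Y \cap H_{\bm\alpha',a'}$ and verify that this yields an order- and codimension-preserving injection $L(\mathcal{A}+H_{\bm\alpha,a}) \hookrightarrow L(\mathcal{A}+H_{\bm\alpha',a'})$, the point being that the relaxed incidence only produces additional new flats on the right-hand side. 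Monotonicity of $W_i$ then follows directly, while that of $f_i$ and $r$ is obtained by applying Zaslavsky's formula to the restrictions of the two extensions and inducting on dimension. The main obstacle is the M\"obius-based invariants $w_i^+$ and $c_{ij}$, since a lattice injection does not in general preserve M\"obius values. To handle these I plan to pass to the $\NBC$ interpretation of unsigned Whitney numbers and lift the lattice injection to an injection of $\NBC$ sets between $\mathcal{M}(\mathcal{A}+H_{\bm\alpha,a})$ and $\mathcal{M}(\mathcal{A}+H_{\bm\alpha',a'})$, exploiting that the covering step corresponds to a weak map of matroids and hence to a termwise inequality on Whitney numbers of the first kind.
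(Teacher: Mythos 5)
Your reduction to a single covering step already rests on two claims that fail. First, instantiating a saturated chain in $L(\tilde{\mathcal{A}})$ requires a representative $(\bm\beta,b)\in M(\tilde{\mathcal{A}}/Z)$ for every intermediate flat $Z$; over a finite field these strata can be empty, while the theorem (and the paper's proof, which handles any pair with $X_{(\bm\alpha,a)}\subseteq X_{(\bm\alpha',a')}$ directly) is stated over an arbitrary $\mathbb{F}$. Second, and more seriously, a cover in $L(\tilde{\mathcal{A}})$ is a dimension drop by one, not the imposition of exactly one incidence: the smaller flat may lie on many hyperplanes of $\tilde{\mathcal{A}}$ that do not contain the larger one. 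Already in \autoref{example-1} the line $H_{\tilde{\bm u}_1}\cap H_{\tilde{\bm v}_1}\cap H_{\tilde{\bm v}_2}$ is covered (in the reverse-inclusion order) by $H_{\tilde{\bm v}_1}$, and passing between these strata relaxes two incidences simultaneously (one of $\tilde{\bm u}$-type and one of $\tilde{\bm v}$-type), so your picture of ``a small deformation relaxing exactly that single incidence'' is not available even for a covering step. In addition, the representative $(\bm\alpha,a)+(\bm\gamma,c)$ with $(\bm\gamma,c)\in X_{(\bm\alpha',a')}\setminus X_{(\bm\alpha,a)}$ need not lie in $M(\tilde{\mathcal{A}}/X_{(\bm\alpha',a')})$ at all (it can land on further hyperplanes of $\tilde{\mathcal{A}}$), so even over $\mathbb{R}$ a genericity argument is missing.

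The deeper problem is that everything you label ``verify'' or ``exploit'' is exactly the content of the theorem. That $Y\cap H_{\bm\alpha,a}\ne\emptyset$ and $Y\not\subseteq H_{\bm\alpha,a}$ force the same for $H_{\bm\alpha',a'}$, that the assignment $Y\cap H_{\bm\alpha,a}\mapsto Y\cap H_{\bm\alpha',a'}$ is independent of the chosen $Y$ and collision-free, are precisely what the paper proves via \autoref{lemma1}, \autoref{lemma0}, \autoref{NBC-lemma1} and \autoref{NBC-lemma2}; none of this follows from the deformation picture, and you supply no proof. The assertion that ``the covering step corresponds to a weak map of matroids'' is likewise unproved, and proving it is essentially the same rank-comparison work; note also that $\mathcal{A}+H_{\bm\alpha,a}$ is an affine arrangement (a semimatroid), so the weak-map inequality for Whitney numbers of the first kind does not apply verbatim --- you would have to cone, and then rule out a flat $Y\in L(\mathcal{A})$ with $Y\subseteq H_{\bm\alpha',a'}$ but $Y\cap H_{\bm\alpha,a}=\emptyset$ (here \autoref{lemma0} together with essentiality is what saves the day; without it the coned rank can drop). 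Finally, even granting such a weak-map inequality, it yields only the global numbers $w_i^+$; the theorem also claims monotonicity of $c_{ij}$, $f_i$ and $r$, which requires comparing the restricted arrangements $(\mathcal{A}+H_{\bm\alpha,a})/Y$ and $(\mathcal{A}+H_{\bm\alpha',a'})/Y'$ at corresponding flats of every dimension --- the content of \autoref{NBC-lemma3} and \autoref{NBC-lemma4} --- and your appeal to ``Zaslavsky's formula and induction on dimension'' for $f_i$ and $r$ silently presupposes exactly these unproven restricted comparisons. So while the overall architecture (a rank-preserving injection for $W_i$, an NBC-type monotonicity for the first-kind invariants) matches the paper's strategy, the proposal as written leaves the core comparison lemmas unproved and bases its reduction on a false structural claim about covers in $L(\tilde{\mathcal{A}})$.
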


\autoref{classification-1} says that given $X\in L(\tilde{\mathcal{A}})$, $\chi(\mathcal{A}+H_{\bm\alpha,a},t)$ are the same polynomial for any $(\bm\alpha,a)\in M(\tilde{\mathcal{A}}/X)$, denoted by $\chi(X,t)$. When $\mathbb{F}$ is defined over a finite field $\mathbb{F}_q$ or a rational arrangement, we have the following decomposition formula.
\begin{theorem}\label{Convolution-Formula}
Suppose $\mathcal{A}$ is essential. If (a) $\mathbb{F}=\mathbb{F}_q$ or (b) $\mathbb{F}=\mathbb{R}$ and $\mathcal{A}$ is a rational arrangement, then
\[
\sum_{X\in L(\tilde{\mathcal{A}})}\chi(\tilde{\mathcal{A}}/X,t)\chi(X,t)=t^d(t-1)\chi(\mathcal{A},t).
\]
\end{theorem}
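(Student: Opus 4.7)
The plan is to apply the finite field method of Crapo--Rota and Athanasiadis to both sides of the asserted identity, reducing it to a clean double-counting of pairs in $\mathbb{F}_q^d\times\mathbb{F}_q^{d+1}$. Case (b) will be reduced to case (a) via good reduction modulo $p$: for any rational arrangement $\mathcal{A}$, the intersection lattices of $\mathcal{A}$ and $\tilde{\mathcal{A}}$ coincide with those of their mod-$p$ reductions for all but finitely many primes $p$, so the characteristic polynomials on both sides persist, and it suffices to verify the identity at $t=p$ for all sufficiently large $p$.

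Now assume $\mathbb{F}=\mathbb{F}_q$. Since $\tilde{\mathcal{A}}$ is central, each $X\in L(\tilde{\mathcal{A}})$ is a vector subspace of $\mathbb{F}_q^{d+1}$, and the finite field formula gives $\chi(\tilde{\mathcal{A}}/X,q)=|M(\tilde{\mathcal{A}}/X)|$ inside $X$. By \autoref{classification-1}, for any representative $(\bm\alpha,a)\in M(\tilde{\mathcal{A}}/X)$ we have $\chi(X,q)=\chi(\mathcal{A}+H_{\bm\alpha,a},q)=|M(\mathcal{A}+H_{\bm\alpha,a})|$ inside $\mathbb{F}_q^d$. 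Combining these with the disjoint decomposition in \eqref{complement-partition} applied to $\tilde{\mathcal{A}}$, the left-hand side at $t=q$ rewrites as
\[
\sum_{X\in L(\tilde{\mathcal{A}})}|M(\tilde{\mathcal{A}}/X)|\cdot\chi(X,q)=\sum_{(\bm\alpha,a)\in\mathbb{F}_q^{d+1}}|M(\mathcal{A}+H_{\bm\alpha,a})|.
\]
Next I would swap the order of summation and count pairs $(\bm x,(\bm\alpha,a))$ with $\bm x\in M(\mathcal{A}+H_{\bm\alpha,a})$. Since $\mathcal{A}\subseteq\mathcal{A}+H_{\bm\alpha,a}$, such a pair forces $\bm x\in M(\mathcal{A})$ and $\bm\alpha\cdot\bm x\neq a$; for any fixed $\bm x$, the number of $(\bm\alpha,a)\in\mathbb{F}_q^{d+1}$ with $\bm\alpha\cdot\bm x\neq a$ is uniformly $q^{d+1}-q^d=q^d(q-1)$, which in one stroke absorbs the degenerate instances $(\bm 0,0)$ (the ambient space, contributing $|M|=0$) and $(\bm 0,a)$ with $a\neq 0$ (the empty hyperplane, contributing $|M(\mathcal{A})|$). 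Hence the left-hand side at $t=q$ equals $q^d(q-1)|M(\mathcal{A})|=q^d(q-1)\chi(\mathcal{A},q)$, matching the right-hand side.

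To upgrade this numerical equality to a polynomial identity, I would observe that base change from $\mathbb{F}_q$ to $\mathbb{F}_{q^k}$ leaves the intersection lattices of $\mathcal{A}$ and $\tilde{\mathcal{A}}$ unchanged (ranks of subsystems of linear equations are preserved under field extension), so the same argument yields the identity at every $t=q^k$; since both sides are polynomials in $t$ of degree $2d+1$, agreement at infinitely many values forces the polynomial identity. The main subtlety I anticipate lies in verifying that the stratum-indexed $\chi(X,t)$ from \autoref{classification-1}, as counted by $|M(\mathcal{A}+H_{\bm\alpha,a})|$, behaves coherently across all strata of $\tilde{\mathcal{A}}$ --- in particular, that the strata containing the degenerate vectors $(\bm 0,0)$ and $(\bm 0,a)$, $a\neq 0$, are reconciled cleanly with the conventions $\chi(\mathcal{A}+V,t)=0$ and $L(\mathcal{A}+H_{\bm 0,a})=L(\mathcal{A})$ highlighted in Subsection 2.3.
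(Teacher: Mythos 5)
Your proposal is correct and follows essentially the same route as the paper: the same double count of pairs $\big((\bm\alpha,a),\bm x\big)$ with $\bm x\in M(\mathcal{A}+H_{\bm\alpha,a})$, evaluated once via the stratification of $\mathbb{F}_q^{d+1}$ by $\tilde{\mathcal{A}}$ together with \autoref{classification-1} and the finite field method, and once by fixing $\bm x\in M(\mathcal{A})$, followed by the same upgrade to a polynomial identity through infinitely many field extensions and the same reduction of case (b) to case (a) by good reduction modulo large primes. Your uniform count $q^{d+1}-q^d$ of pairs $(\bm\alpha,a)$ with $\bm\alpha\cdot\bm x\ne a$ even spares the paper's separate treatment of whether the origin lies in $M(\mathcal{A})$, but the argument is otherwise the paper's own.
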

Below is a small example to give a brief illustration of the above results.
\begin{example}\label{example-1}
{\rm Let $\mathcal{A}=\{H_1: x_1=0, H_2: x_1=1, H_3: x_2=0  \}$ be a hyperplane arrangement in $\mathbb{R}^2$. Clearly
\[
L_{0}(\mathcal{A})=\{{\bm v}_1=(0,0),{\bm v}_2=(1,0)\}
\]
and
\[
L_1(\mathcal{A}^o)=\{\mathbb{F}{\bm u}_1,\mathbb{F}{\bm u}_2\} \quad {\rm with} \quad {\bm u}_1=(1,0),{\bm u}_2=(0,1).
\]
From {\rm(\ref{extension-adjoint-arrangement})}, we have that $\tilde{\mathcal{A}}$ is a linear arrangement in $\mathbb{R}^3$ consisting of
\[
H_{\tilde{{\bm u}}_1}: x_1=0,\quad
 H_{\tilde{{\bm u}}_2}: x_2=0 ,\quad H_{\tilde{{\bm v}}_1}:x_3=0,\quad    H_{\tilde{{\bm v}}_2}:x_1-x_3=0.
\]
The intersection lattice $L(\tilde{\mathcal{A}})$ is graded with
\[
L_0(\tilde{\mathcal{A}})=\big\{{\bm 0}\big\},\quad L_1(\tilde{\mathcal{A}})=\big\{H_{\tilde{{\bm u}}_1}\cap H_{\tilde{{\bm u}}_2}, H_{\tilde{{\bm u}}_1}\cap H_{\tilde{{\bm v}}_1}\cap H_{\tilde{{\bm v}}_2}, H_{\tilde{{\bm u}}_2}\cap H_{\tilde{{\bm v}}_1}, H_{\tilde{{\bm u}}_2}\cap H_{\tilde{{\bm v}}_2}\big\},
\]
\[
L_2(\tilde{\mathcal{A}})=\big\{H_{\tilde{{\bm u}}_1}, H_{\tilde{{\bm u}}_2}, H_{\tilde{{\bm v}}_1}, H_{\tilde{{\bm v}}_2}\big\},\quad L_3(\tilde{\mathcal{A}})=\big\{\mathbb{R}^3\big\},
\]
where $L_k(\tilde{\mathcal{A}})$ consists of $k$-dimensional members of $\tilde{\mathcal{A}}$. Suppose  $({\bm\alpha},a)\in M(\tilde{\mathcal{A}}/X)$ for some $X\in L(\tilde{\mathcal{A}})$. When $X$ runs over $L(\tilde{\mathcal{A}})$, there are six classes of $\mathcal{A}+ H_{\bm\alpha,a}$ except that $X\in L_0(\tilde{\mathcal{A}})$, where the red line is the hyperplane $ H_{\bm\alpha,a}$, see the figures below, where Fig.\ref{Fig.1} is for  $X\in L_1(\tilde{\mathcal{A}})$,  Fig.\ref{Fig.2} for  $X=H_{\tilde{{\bm u}}_1}$, Fig.\ref{Fig.3} for  $X=H_{\tilde{{\bm u}}_2}$, Fig.\ref{Fig.4} for  $X=H_{\tilde{{\bm v}}_1}$, Fig.\ref{Fig.5} for  $X=H_{\tilde{{\bm v}}_2}$, and Fig.\ref{Fig.6} for  $X\in L_3(\tilde{\mathcal{A}})$.
\begin{figure}[htbp]
	\centering

		\begin{minipage}[c]{0.15\linewidth}
			\centering
			\includegraphics[width=0.8in]{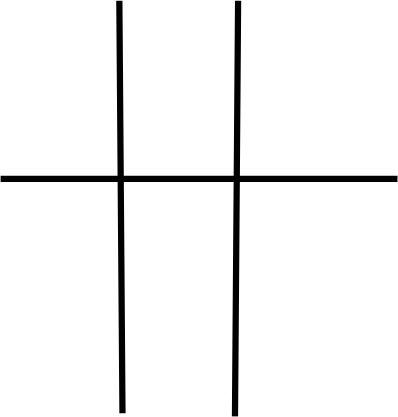}
			\caption{Fig.1} \label{Fig.1}
		\end{minipage}
        \begin{minipage}[c]{0.15\linewidth}
			\centering
			\includegraphics[width=0.8in]{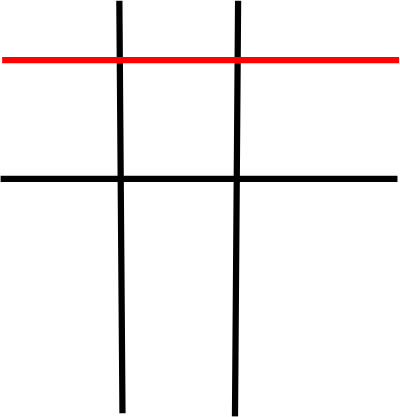}
			\caption{Fig.2} \label{Fig.2}
		\end{minipage}
		\begin{minipage}[c]{0.15\linewidth}
			\centering
			\includegraphics[width=0.8in]{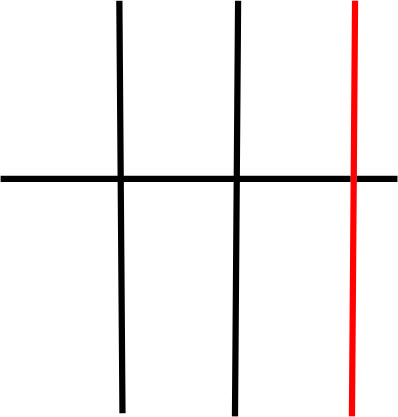}
			\caption{Fig.3} \label{Fig.3}
		\end{minipage}
		\begin{minipage}[c]{0.15\linewidth}
			\centering
			\includegraphics[width=0.8in]{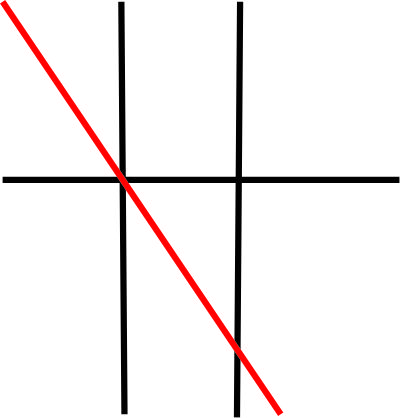}
			\caption{Fig.4} \label{Fig.4}
		\end{minipage}
		\begin{minipage}[c]{0.15\linewidth}
			\centering
			\includegraphics[width=0.8in]{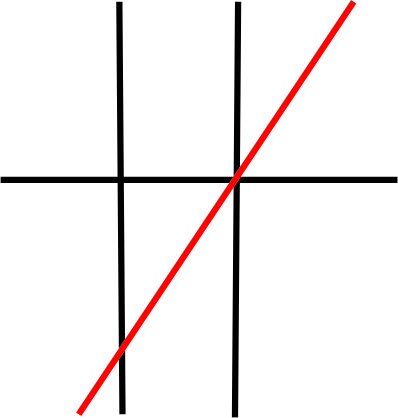}
			\caption{Fig.5} \label{Fig.5}
		\end{minipage}
		\begin{minipage}[c]{0.15\linewidth}
			\centering
			\includegraphics[width=0.8in]{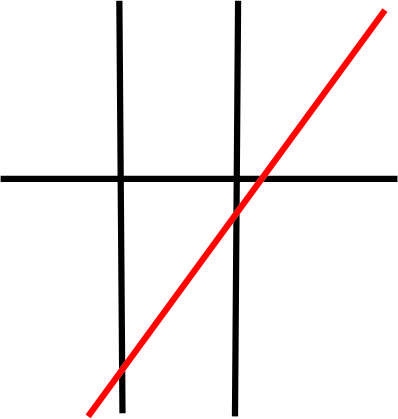}
			\caption{Fig.6} \label{Fig.6}
		\end{minipage}
\end{figure}

Their Whitney polynomials are
\begin{eqnarray*}
 w(\mathcal{A}+H_{\bm\alpha,a})&=&0\;\quad\quad\quad\quad\quad\quad\quad\quad\quad\quad\If  X\in L_0(\tilde{\mathcal{A}}),\\
 w(\mathcal{A}+H_{\bm\alpha,a})&=&3st-4s+t^2-3t+2\quad\;\,\If  X\in L_1(\tilde{\mathcal{A}}),\\
 w(\mathcal{A}+H_{\bm\alpha,a})&=&4st-8s+t^2-4t+4\quad\;\,\If  X=H_{\tilde{{\bm u}}_1},\\
 w(\mathcal{A}+H_{\bm\alpha,a})&=&4st-6s+t^2-4t+3\quad\;\,\If  X=H_{\tilde{{\bm u}}_2},\\
 w(\mathcal{A}+H_{\bm\alpha,a})&=&4st-7s+t^2-4t+4\quad\;\,\If X\in \{H_{\tilde{{\bm v}}_1}, H_{\tilde{{\bm v}}_2}\},\\
 w(\mathcal{A}+H_{\bm\alpha,a})&=&4st-10s+t^2-4t+5\quad\If X=\mathbb{R}^3.
\end{eqnarray*}
We can easily see that $c_{ij}(\mathcal{A}+H_{\bm\alpha,a})\le c_{ij}(\mathcal{A}+H_{\bm\alpha',a'})$ if $X_{(\bm\alpha,a)}\subseteq X_{(\bm\alpha',a')}$ with $X_{(\bm\alpha,a)},X_{(\bm\alpha',a')}\in L(\tilde{\mathcal{A}})$ .
It is easy to check that the remaining combinatorial invariants are order-preserving with respect to $L(\tilde{\mathcal{A}})$.}
\end{example}
\section{Classifications}\label{SEC2}
\paragraph{3.1. Proof of \autoref{classification-1}.}  For any $X\in L(\mathcal{A})$, the rank $\rank_{ \mathcal{A}}(X)$ of $X$ is defined as
\begin{equation*}\label{rank-flat}
\rank_{ \mathcal{A}}(X)=d-{\rm dim}(X).
\end{equation*}
Given a subset $\mathcal{B}\subseteq\mathcal{A}$, define $\rank_{\mathcal{A}}(\mathcal{B})$ to be
\begin{equation}\label{rank}
\rank_{ \mathcal{A}}(\mathcal{B})=\rank \big\{{\bm \alpha}_i\mid H_{\bm\alpha_i,a_i}\in \mathcal{B}\big\}.
\end{equation}
Obviously $\rank_{ \mathcal{A}}(X)=\rank_{ \mathcal{A}}(\mathcal{A}_{  X})$. Generally, if $\mathcal{B}\subseteq\mathcal{A}$ and $X=\bigcap_{H\in \mathcal{B}}H\in L(\mathcal{A})$, then
\begin{equation}\label{rank-identity}
\rank_{ \mathcal{A}}(\mathcal{B})=\rank_{ \mathcal{A}}(X)=d-{\rm dim}(X).
\end{equation}
Below is a key property to verifying \autoref{classification-1}.
\begin{lemma}\label{lemma1}
Suppose $\mathcal{A}$ is essential. Given $({\bm\alpha},a),({\bm\alpha'},a')\in \mathbb{F}^{d+1}$. If $X_{(\bm\alpha,a)}\subseteq X_{(\bm\alpha',a')}$ for $X_{(\bm\alpha,a)},X_{(\bm\alpha',a')}\in L(\tilde{\mathcal{A}})$, then
\[
\rank_{  \mathcal{A}+ H_{\bm\alpha,a}}(\mathcal{A}_{  Y}+ H_{\bm\alpha,a})\le\rank_{ \mathcal{A}+H_{\bm\alpha',a'}}(\mathcal{A}_{  Y}+H_{\bm\alpha',a'}),\quad Y\in L(\mathcal{A}).
\]
\end{lemma}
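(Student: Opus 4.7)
The plan is to translate the hypothesis $X_{(\bm\alpha,a)}\subseteq X_{(\bm\alpha',a')}$ into a purely linear condition on $\bm\alpha,\bm\alpha'$ via the $1$-dimensional flats of $\mathcal{A}^o$, and then deduce the rank inequality by a dimension comparison.

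First I would unpack the hypothesis. Since $X_{\bm x}$ is the inclusion-minimal flat of $\tilde{\mathcal{A}}$ containing $\bm x$, the set $\tilde{\mathcal{A}}_{X_{\bm x}}$ is exactly the set of hyperplanes of $\tilde{\mathcal{A}}$ passing through $\bm x$. The assumption $X_{(\bm\alpha,a)}\subseteq X_{(\bm\alpha',a')}$ therefore forces $\tilde{\mathcal{A}}_{X_{(\bm\alpha',a')}}\subseteq\tilde{\mathcal{A}}_{X_{(\bm\alpha,a)}}$, so every $H_{\tilde{\bm u}_i}\in\tilde{\mathcal{A}}_1$ through $(\bm\alpha',a')$ also passes through $(\bm\alpha,a)$. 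Using $\tilde{\bm u}_i=(\bm u_i,0)$ from \eqref{extension-adjoint-arrangement}, this is the implication
\[
\bm u_i\cdot\bm\alpha'=0\;\Longrightarrow\;\bm u_i\cdot\bm\alpha=0\quad\text{for every }\mathbb{F}\bm u_i\in L_1(\mathcal{A}^o).
\]
The analogous condition coming from $\tilde{\mathcal{A}}_0$ is affine and will turn out not to be needed for this particular inequality.

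Next I would translate both sides into spans of normal vectors. For $Y\in L(\mathcal{A})$, let $V_Y=\Span\{\bm\alpha_i:H_{\bm\alpha_i,a_i}\in\mathcal{A}_Y\}$; by \eqref{rank} the two ranks equal $\dim(V_Y+\mathbb{F}\bm\alpha)$ and $\dim(V_Y+\mathbb{F}\bm\alpha')$ respectively. Let $Y_0$ denote the direction space of $Y$, so $Y_0\in L(\mathcal{A}^o)$. One has $V_Y\subseteq Y_0^\perp$ since every normal $\bm\alpha_i$ of a hyperplane containing $Y$ annihilates $Y_0$, and both spaces have dimension $d-\dim Y$ by \eqref{rank-identity}, so in fact $V_Y=Y_0^\perp$. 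The inequality therefore reduces to the implication
\[
Y_0\subseteq H_{\bm\alpha'}\;\Longrightarrow\;Y_0\subseteq H_{\bm\alpha}.
\]

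The crux of the argument is the geometric claim that every $Y_0\in L(\mathcal{A}^o)$ is spanned by the elements of $L_1(\mathcal{A}^o)$ it contains. To prove this I would observe that the restriction $\mathcal{A}^o/Y_0$ is essential in $Y_0$, since $\bigcap_{H\in\mathcal{A}^o}H=\{\bm 0\}$ by essentialness of $\mathcal{A}$. A short induction on the ambient dimension then shows that in any essential linear arrangement the $1$-dimensional flats span the ambient space (restrict to a hyperplane of the arrangement and use the fact that two distinct hyperplanes already suffice to span). Each $1$-dimensional flat of $L(\mathcal{A}^o/Y_0)$ has the shape $(\bigcap_j H_{\bm\alpha_{i_j}})\cap Y_0$ and, after adjoining the hyperplanes of $\mathcal{A}^o$ already containing $Y_0$, lifts to an element of $L_1(\mathcal{A}^o)$ contained in $Y_0$. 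Combined with the displayed implication above, this finishes the argument: if $Y_0\subseteq H_{\bm\alpha'}$, then each generator $\mathbb{F}\bm u_i\subseteq Y_0$ satisfies $\bm u_i\cdot\bm\alpha'=0$, hence $\bm u_i\cdot\bm\alpha=0$, so $Y_0\subseteq H_{\bm\alpha}$. The main obstacle is precisely this spanning claim, which encodes why the adjoint data $L_1(\mathcal{A}^o)$ captures every perpendicularity imposed by flats of $\mathcal{A}^o$; once it is in hand, the remaining steps are routine bookkeeping.
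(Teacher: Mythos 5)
Your proposal is correct, and it arrives at the paper's key mechanism by a noticeably different route. Both arguments reduce the inequality to the purely linear statement that, writing $Y_0\in L(\mathcal{A}^o)$ for the direction space of $Y$, the implication $\bm\alpha'\in Y_0^{\perp}\Rightarrow\bm\alpha\in Y_0^{\perp}$ must hold; your identification $V_Y=Y_0^{\perp}$ via \eqref{rank} and \eqref{rank-identity} makes this reduction transparent, whereas the paper packages it as the impossibility of $r=k+1$, $r'=k$. The difference lies in how the adjoint lines are shown to detect this perpendicularity: the paper chooses $d$ linearly independent normals with $Z=\bigcap_{i=1}^{k}H_{\bm\alpha_i}$, forms the dual-basis lines $\mathbb{F}\bm u_j=\bigcap_{i\ne j}H_{\bm\alpha_i}\in L_1(\mathcal{A}^o)$, concludes $Z^{\perp}=\bigcap_{j>k}H_{\bm u_j}\in L(\sigma\mathcal{A}^o)$, and then transfers membership through the auxiliary minimal flats $X_1\subseteq X_1'$ of $L(\tilde{\mathcal{A}}_1)$ and the restriction to $H_{\tilde{\bm 0}}$, arguing by contradiction. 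You instead prove a general spanning lemma (every flat of an essential central arrangement is spanned by the rank-one flats it contains) by induction on dimension via restriction, and transfer orthogonality directly from the hypothesis: each $H_{\tilde{\bm u}_i}$ through $(\bm\alpha',a')$ contains $X_{(\bm\alpha',a')}\supseteq X_{(\bm\alpha,a)}\ni(\bm\alpha,a)$, which is precisely the $\tilde{\mathcal{A}}_1$-analogue of the paper's \autoref{lemma0}. The paper's explicit dual basis buys a one-stroke, induction-free proof of the slightly stronger fact $Z^{\perp}\in L(\sigma\mathcal{A}^o)$; your version buys a more self-contained and direct argument that avoids the detour through $L(\tilde{\mathcal{A}}_1/H_{\tilde{\bm 0}})$ and the contradiction. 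In a written version you should spell out the degenerate cases $Y_0=\{\bm 0\}$ and $Y=\mathbb{F}^d$ (where the implication is vacuous, respectively follows because $L_1(\mathcal{A}^o)$ spans $\mathbb{F}^d$ and forces $\bm\alpha=\bm 0$), but these are routine.
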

\begin{proof}
For simplicity, we denote $\mathcal{A}_Z^o:=(\mathcal{A}^o)_Z$ for $Z\in L(\mathcal{A}^o)$. Note from the definition of the linearization that for any $Y\in L(\mathcal{A})$, we have $(\mathcal{A}_Y)^o\subseteq \mathcal{A}^o$ and $Y^o=\bigcap_{H\in (\mathcal{A}_Y)^o}H\in L(\mathcal{A}^o)$. It follows from \eqref{rank} and \eqref{rank-identity} that
\[
\rank_{  \mathcal{A}+ H_{\bm\alpha,a}}\big(\mathcal{A}_{  Y}+ H_{\bm\alpha,a}\big)=\rank_{  \mathcal{A}^o+H_{\bm\alpha}}\big((\mathcal{A}_{  Y})^o+H_{\bm\alpha}\big)=\rank_{  \mathcal{A}^o+H_{\bm\alpha}}\big(\mathcal{A}_{Y^o}^o+H_{\bm\alpha}\big).
\]
Thus, the proof can reduced to verifying that for any $Z\in L(\mathcal{A}^o)$,
\[
\rank_{ \mathcal{A}^o+H_{\bm\alpha}}(\mathcal{A}_Z^o+H_{\bm\alpha})\le\rank_{ \mathcal{A}^o+H_{{\bm \alpha}'}}(\mathcal{A}_Z^o+H_{{\bm \alpha}'}).
\]
Notice from (\ref{complement-partition}) that there exist unique $X_1,X_1'\in L(\tilde{\mathcal{A}_1})$ such that $({\bm\alpha},a)\in M(\tilde{\mathcal{A}_1}/X_1)$ and $({\bm\alpha}',a')\in M(\tilde{\mathcal{A}_1}/X_1')$. Recall from (\ref{A_01}) that $\tilde{\mathcal{A}_1}$ is a subarrangement $\tilde{\mathcal{A}}$ in $\mathbb{F}^{d+1}$, which implies $L(\tilde{\mathcal{A}_1})\subseteq L(\tilde{\mathcal{A}})$. So we have $X_1,X_1'\in L(\tilde{\mathcal{A}})$. Moreover, the minimality of $X_{(\bm\alpha',a')}\in L(\tilde{\mathcal{A}})$ containing $({\bm\alpha}',a')$ means $X_{(\bm\alpha',a')}\subseteq X_1'$.  Since $X_{(\bm\alpha,a)}\subseteq X_{(\bm\alpha',a')}$ and $({\bm\alpha},a)\in X_{(\bm\alpha,a)}$, we obtain $({\bm\alpha},a)\in X_1'$. Likewise, the minimality of $X_1\in L(\tilde{\mathcal{A}_1})$ containing $({\bm\alpha},a)$ implies $X_1\subseteq X_1'$. Consider the hyperplane $H_{\tilde{\bm 0}}:x_{d+1}=0$. Then for any $U\in L(\tilde{\mathcal{A}_1})$, $({\bm\beta},b)\in U$ if and only if $({\bm\beta},0)\in U\cap H_{\tilde{\bm 0}}$ obviously. Hence, we arrive at $({\bm\alpha},0)\in X_1\cap H_{\tilde{\bm 0}}$ and $({\bm\alpha}',0)\in X_1'\cap H_{\tilde{\bm 0}}$. Let
\[
\tilde{\mathcal{A}}_1/H_{\tilde{\bm 0}}:=\{H\cap H_{\tilde{\bm 0}}\ne\emptyset\mid H\in \tilde{\mathcal{A}}_1\}
\]
be the hyperplane arrangement in $H_{\tilde{\bm 0}}$. Then $X_1\cap H_{\tilde{\bm 0}}$ and $X_1'\cap H_{\tilde{\bm 0}}$ are inclusion-minimal members in $L(\tilde{\mathcal{A}}_1/H_{\tilde{\bm 0}})$ containing $({\bm\alpha},0)$ and $({\bm\alpha}',0)$ respectively. Since  $\mathcal{A}$ is essential, we assume $\mathbb{F}^d:=\span\{{\bm\alpha}_1,\ldots,{\bm\alpha}_d\}$ spanned by ${\bm\alpha}_1,\ldots,{\bm\alpha}_d$,  and $Z=\bigcap_{i=1}^{k}H_{\bm\alpha_i}\in L(\mathcal{A}^o)$ if $\rank_{\mathcal{A}^o}(\mathcal{A}^o_Z)=k$. It is easily seen that for each $j\in [d]$, $\bigcap_{i\in [d];i\ne j}H_{\bm \alpha_i}\in L_1(\mathcal{A}^o)$. Assume $\mathbb{F}\bm u_j=\bigcap_{i\in [d];i\ne j}H_{\bm \alpha_i}$. Then the vectors $\bm u_j$ for $j\in [d]$ are linearly independent and the normal space $Z^\perp$ of $Z$ can be represented as
\[
Z^{\perp}=\span\{{\bm\alpha}_{1},\ldots,{\bm\alpha}_{k}\}=\bigcap_{j=k+1}^dH_{\bm u_j}\in L(\sigma\mathcal{A}^o),
\]
since ${\bm \alpha}_i\cdot\bm u_j=0$ for all $i=1,\ldots,k$ and $j=k+1\ldots,d$. Let
\[
\rank_{ \mathcal{A}^o+H_{\bm\alpha}}\big(\mathcal{A}_Z^o+H_{\bm\alpha}\big)=r\quad
\And\quad \rank_{ \mathcal{A}^o+H_{{\bm \alpha}'}}\big(\mathcal{A}_Z^o+
H_{{\bm \alpha}'}\big)=r'.
\]
We need to prove $r\le r'$. Since $\rank_{ \mathcal{A}^o}(\mathcal{A}^o_Z)=k$, we have $k\le r, r' \le \min\{k+1,d\}$. The case $k=d$ is trivial. If $k<d$, we will give a proof by contradiction. Suppose $r> r'$. Immediately, we have $r=k+1$ and $r'=k$. Note the fact that for any ${\bm \beta}\in \mathbb{F}^d$, $\rank_{ \mathcal{A}^o+H_{\bm\beta}}\big(\mathcal{A}_Z^o+H_{\bm\beta}\big)= \rank_{ \mathcal{A}^o}(\mathcal{A}^o_Z)$ if and only if $\bm\beta\in Z^{\perp}$. Then we have ${\bm \alpha}'\in Z^{\perp}$ and ${\bm \alpha}\notin Z^{\perp}$. Since the isomorphism $L(\sigma\mathcal{A}^o)\cong L(\tilde{\mathcal{A}}_1/H_{\tilde{\bm 0}})$
sends $Z^{\perp}$ to $Z^{\perp}\times \{0\}$, we obtain $({\bm \alpha}',0)\in Z^{\perp}\times \{0\}$. From the minimality of $X_1'\cap H_{\tilde{\bm 0}}\in L(\tilde{\mathcal{A}}_1/H_{\tilde{\bm 0}})$ containing $({\bm \alpha}',0)$, we have $X_1'\cap H_{\tilde{\bm 0}}\subseteq Z^{\perp}\times \{0\}$. Thus $({\bm \alpha},0)\in X_1\cap H_{\tilde{\bm 0}}\subseteq X_1'\cap H_{\tilde{\bm 0}}\subseteq Z^{\perp}\times \{0\}$, contradicting ${\bm \alpha}\notin Z^{\perp}$. It completes the proof.
\end{proof}
The following property will be used to prove \autoref{classification-1}.
\begin{lemma}\label{lemma0}
Suppose $\mathcal{A}$ is essential. Given $({\bm\alpha},a),({\bm\alpha'},a')\in \mathbb{F}^{d+1}$. If $X_{(\bm\alpha,a)}\subseteq X_{(\bm\alpha',a')}$ for $X_{(\bm\alpha,a)},X_{(\bm\alpha',a')}\in L(\tilde{\mathcal{A}})$, then
\[
H_{\bm\alpha',a'}\cap L_0(\mathcal{A})\subseteq H_{\bm\alpha,a}\cap L_0(\mathcal{A}).
\]
\end{lemma}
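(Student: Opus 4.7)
The plan is to exploit a simple point--hyperplane duality packaged into the definition of $\tilde{\mathcal{A}}_0$. The key observation is that for any covector $(\bm\beta,b)\in\mathbb{F}^{d+1}$ and any vertex $\bm v_i\in L_0(\mathcal{A})$, the affine incidence $\bm v_i\in H_{\bm\beta,b}$ is equivalent to the linear incidence $(\bm\beta,b)\in H_{\tilde{\bm v}_i}$, because both conditions amount to the single scalar equation $\bm\beta\cdot\bm v_i=b$ (recall $\tilde{\bm v}_i=(\bm v_i,-1)$). Hence the set $H_{\bm\beta,b}\cap L_0(\mathcal{A})$ is in canonical bijection with $\{i\in[n_0]:(\bm\beta,b)\in H_{\tilde{\bm v}_i}\}$, which converts the target inclusion into a statement about which hyperplanes of $\tilde{\mathcal{A}}_0$ contain a given point.

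First I would pick an arbitrary $\bm v_i\in H_{\bm\alpha',a'}\cap L_0(\mathcal{A})$ and use the duality above to rewrite this as $(\bm\alpha',a')\in H_{\tilde{\bm v}_i}$. Since $H_{\tilde{\bm v}_i}\in\tilde{\mathcal{A}}_0\subseteq\tilde{\mathcal{A}}$, the inclusion-minimal flat $X_{(\bm\alpha',a')}\in L(\tilde{\mathcal{A}})$ containing $(\bm\alpha',a')$ must itself be contained in $H_{\tilde{\bm v}_i}$; this is the standard characterization that $X_{\bm y}$ equals the intersection of all hyperplanes of $\tilde{\mathcal{A}}$ passing through $\bm y$, so any one of those hyperplanes is a superset of $X_{\bm y}$.

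Next I would invoke the hypothesis $X_{(\bm\alpha,a)}\subseteq X_{(\bm\alpha',a')}$ to conclude $X_{(\bm\alpha,a)}\subseteq H_{\tilde{\bm v}_i}$. Since $(\bm\alpha,a)\in X_{(\bm\alpha,a)}$ by the very definition of the minimal flat, this gives $(\bm\alpha,a)\in H_{\tilde{\bm v}_i}$, and reversing the duality yields $\bm v_i\in H_{\bm\alpha,a}\cap L_0(\mathcal{A})$. Since $\bm v_i$ was arbitrary, the desired inclusion follows.

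I do not anticipate any real obstacle: the whole proof hinges only on the very definition of $\tilde{\mathcal{A}}_0$ and the elementary fact that the minimal flat of $L(\tilde{\mathcal{A}})$ through a given point is cut out exactly by those hyperplanes of $\tilde{\mathcal{A}}$ on which the point lies. The essentiality assumption on $\mathcal{A}$ plays no active role in the argument; it merely justifies the standing conventions of Subsection 2.2 used to form $L_0(\mathcal{A})$ and $\tilde{\mathcal{A}}_0$. In the degenerate case $L_0(\mathcal{A})=\emptyset$ the inclusion is vacuously true.
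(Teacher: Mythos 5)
Your proposal is correct and follows essentially the same route as the paper: translate $\bm v_i\in H_{\bm\alpha',a'}$ into $(\bm\alpha',a')\in H_{\tilde{\bm v}_i}$ via the definition of $\tilde{\bm v}_i=(\bm v_i,-1)$, use minimality of $X_{(\bm\alpha',a')}$ to get $X_{(\bm\alpha',a')}\subseteq H_{\tilde{\bm v}_i}$, then apply the hypothesis $X_{(\bm\alpha,a)}\subseteq X_{(\bm\alpha',a')}$ and $(\bm\alpha,a)\in X_{(\bm\alpha,a)}$ to conclude $\bm v_i\in H_{\bm\alpha,a}$. Your side remarks (the explicit duality framing, the observation that essentiality is not actively used) are accurate but do not change the argument.
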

\begin{proof}
If $H_{\bm\alpha',a'}\cap L_0(\mathcal{A})=\emptyset$, the case is trivial. So, it is enough to show that for any ${\bm v}_k\in H_{\bm\alpha',a'}\cap L_0(\mathcal{A})$, ${\bm v}_k$ belongs to $H_{\bm\alpha,a}$. Note from ${\bm v}_k\in H_{\bm\alpha',a'}\cap L_0(\mathcal{A})$ that ${\bm\alpha}'\cdot{\bm v}_k=a'$. Together with the definition of $\tilde{\mathcal{A}}$ in \eqref{extension-adjoint-arrangement}, we have $({\bm\alpha}',a')\in H_{\tilde{\bm v}_k}$. In addition, the minimality of $X_{(\bm\alpha',a')}$ containing $({\bm\alpha}',a')$ yields $X_{(\bm\alpha',a')}\subseteq H_{\tilde{\bm v}_k}$. Applying the assumption $X_{(\bm\alpha,a)}\subseteq X_{(\bm\alpha',a')}$, we obtain $X_{(\bm\alpha,a)}\subseteq H_{\tilde{\bm v}_k}$. It follows from $({\bm\alpha},a)\in X_{(\bm\alpha,a)}$ that $({\bm\alpha},a)\in H_{\tilde{\bm v}_k}$. Hence, we have ${\bm\alpha}\cdot{\bm v}_k=a$, i.e., ${\bm v}_k\in H_{\bm\alpha,a}$. We finish this proof.
\end{proof}
\begin{proof}[Proof of \autoref{classification-1}] Given $({\bm\alpha},a),({\bm\alpha}',a')\in M(\tilde{\mathcal{A}}/X)$ for some $X\in L(\tilde{\mathcal{A}})$. Note that $L(\mathcal{A}+H_{\bm\alpha,a})$ and $L(\mathcal{A}+H_{\bm\alpha',a'})$ can be written as the following forms
\[
L(\mathcal{A}+ H_{\bm\alpha,a})=L(\mathcal{A})\sqcup L_{\bm\alpha,a},\quad L(\mathcal{A}+ H_{\bm\alpha',a'})=L(\mathcal{A})\sqcup L_{\bm\alpha',a'},
\]
where $L_{\bm\alpha,a}=\big\{Y\cap H_{\bm\alpha,a}\ne\emptyset\mid Y\in L(\mathcal{A}), Y\cap H_{\bm\alpha,a}\notin L(\mathcal{A})\big\}$ and
$L_{\bm\alpha',a'}=\big\{Y\cap H_{\bm\alpha',a'}\ne\emptyset\mid Y\in L(\mathcal{A}), Y\cap H_{\bm\alpha',a'}\notin L(\mathcal{A})\big\}$.
Define a map $\iota:\, L(\mathcal{A}+H_{\bm\alpha,a})\to L(\mathcal{A}+H_{\bm\alpha',a'})$ such that
\[
Y\mapsto Y \mbox{ if }Y\in L(\mathcal{A}), \And Y\cap H_{\bm\alpha,a}\mapsto Y\cap H_{\bm\alpha',a'}\mbox{ if } Y\cap H_{\bm\alpha,a}\in L_{\bm\alpha,a}.
\]
The proof of  \autoref{classification-1} reduces to showing that $\iota$ is an order-preserving bijection. Next we will check that $\iota$ is well-defined, order-preserving and bijective in turn.

To verify that $\iota$ is well-defined, from the definition of the map $\iota$, we only need to show that $Y\cap H_{\bm\alpha',a'}\in L_{\bm\alpha',a'}$ whenever $Y\cap H_{\bm\alpha,a}\in L_{\bm\alpha,a}$. Taking $Z=Y\cap H_{\bm\alpha,a}\in L_{\bm\alpha,a}$ with $Y\in L(\mathcal{A})$, we have $\dim(Z)=\dim(Y)-1$.
It follows from \eqref{rank-identity} and  \autoref{lemma1} that
\[\rank_{ \mathcal{A}+H_{\bm\alpha',a'}}\big(\mathcal{A}_{  Y}+H_{\bm\alpha',a'}\big)=\rank_{ \mathcal{A}+ H_{\bm\alpha,a}}\big(\mathcal{A}_{  Y}+ H_{\bm\alpha,a}\big)=\rank_{ \mathcal{A}+ H_{\bm\alpha,a}}\big(\mathcal{A}_{  Y}\big)+1.
\]
So we have $Y\cap H_{\bm\alpha',a'}\ne\emptyset$, $\iota(Z)\in L(\mathcal{A}+H_{\bm\alpha',a'})$ and $\dim(\iota(Z))=\dim(Y)-1$ from the elementary linear algebra. Next we will prove $\iota(Z)\notin L(\mathcal{A})$. Suppose $\iota(Z)\in  L(\mathcal{A})$, then there exists $H_{\bm\alpha_j,a_j}\in\mathcal{A}\setminus\mathcal{A}_{  Y}$ satisfying $\iota(Z)=Y\cap H_{\bm\alpha_j,a_j}$. Then we have $\iota(Z)\cap L_0(\mathcal{A})=Z\cap L_0(\mathcal{A})$ from \autoref{lemma0}. This implies that $\iota(Z)\cap L_0(\mathcal{A})\subseteq H_{\bm\alpha,a}$. Note that $\iota(Z)\cap L_0(\mathcal{A})\ne\emptyset$ since $\mathcal{A}$ is essential. So $\iota(Z)\cap H_{\bm\alpha,a}\ne\emptyset$. We claim $\iota(Z)\subseteq H_{\bm\alpha,a}$. Otherwise, we have $\dim(\iota(Z)\cap H_{\bm\alpha,a})=\dim(\iota(Z))-1$. Similarly,
from \eqref{rank-identity} and  \autoref{lemma1}, we have
\[\rank_{ \mathcal{A}+H_{\bm\alpha',a'}}\big(\mathcal{A}_{  \iota(Z)}+H_{\bm\alpha',a'}\big)=\rank_{ \mathcal{A}+ H_{\bm\alpha,a}}\big(\mathcal{A}_{  \iota(Z)}+ H_{\bm\alpha,a}\big)=\rank_{ \mathcal{A}+ H_{\bm\alpha,a}}\big(\mathcal{A}_{  \iota(Z)}\big)+1.
\]
This means that $\dim(\iota(Z)\cap H_{\bm\alpha',a'})=\dim(\iota(Z))-1$, which is in contradiction with $\iota(Z)=\iota(Z)\cap H_{\bm\alpha',a'}$. Note from $\iota(Z)\subseteq Y$ that $\iota(Z)\subseteq Y\cap H_{\bm\alpha,a}=Z$. It follows from $\dim(Z)=\dim(\iota(Z))$ that $Z=\iota(Z)\in L(\mathcal{A})$, contradicting $Z\notin L(\mathcal{A})$. So we obtain $Y\cap H_{\bm\alpha',a'}\in L_{\bm\alpha',a'}$, i.e., $\iota$ is well-defined. Clearly $\iota$ is an order-preserving map from its definition.

By symmetry of $H_{\bm\alpha,a}$ and $H_{\bm\alpha',a'}$, to prove the bijectivity of $\iota$, it is sufficient to show that $\iota$ is injective. Equivalently, from the definition of $\iota$, it is enough to show that the map $\iota$ from $L_{\bm\alpha,a}$ to $L_{\bm\alpha',a'}$ is an injection. Using the same argument as $Z$ and $\iota(Z)$, given $Z_1=Y_1\cap H_{\bm\alpha,a}\in L_{\bm\alpha,a}$ with $Y_1\in L(\mathcal{A})$, we have $\dim(\iota(Z_1))=\dim(Y_1)-1$. Therefore,  if $\dim(Y_1)\ne\dim(Y)$, then $\iota(Z_1)\ne\iota(Z)$. The injectivity of $\iota$ can be reduced to verifying that if $Z_1\ne Z$  and $\dim(Y_1)=\dim(Y)$, then $\iota(Z_1)\ne\iota(Z)$. Suppose $\iota(Z_1)=\iota(Z)$, i.e., $Y_1\cap H_{\bm\alpha',a'}=Y\cap H_{\bm\alpha',a'}$. This means that
\[
Y_1\cap H_{\bm\alpha',a'}=Y_1\cap H_{\bm\alpha',a'}\cap H\quad \For\quad H\in\mathcal{A}_{  Y}\setminus\mathcal{A}_{  Y_1}.
\]
Note from $Z\ne Z_1$ that $Y_1\ne Y$. So there is $H_{\bm\alpha_k,a_k}\in\mathcal{A}_{  Y}\setminus\mathcal{A}_{  Y_1}$ such that $Y_1\cap H_{\bm\alpha',a'}=Y_1\cap H_{\bm\alpha_k,a_k}\in L(\mathcal{A})$, which contradicts $Y_1\cap H_{\bm\alpha',a'}\notin L(\mathcal{A})$. Hence, $\iota$ is bijective. We complete the proof.
\end{proof}
Now we are turning to the theme of how to classify one-element extensions of a hyperplane arrangement which may be not essential. Recall that $O$ is the space spanned by the normal vectors ${\bm\alpha}_1,{\bm\alpha}_2,\ldots,{\bm\alpha}_m$ of the hyperplanes in $\mathcal{A}$. In fact, to study the one-element extensions $\mathcal{A}+H_{\bm\alpha,a}$, we only need to consider the case that  $\mathcal{A}$ is essential. Indeed, if $\mathcal{A}$ is not essential, then $O$ is a proper subspace of $\mathbb{F}^d$. Given ${\bm \alpha}\in \mathbb{F}^d\setminus O$. From the elementary linear algebra, clearly $X\cap H_{\bm\alpha,a}\ne \emptyset,X$ for any $X\in L(\mathcal{A})$ and $a\in\mathbb{F}$. It implies that
\[
L\big(\mathcal{A}+ H_{\bm\alpha,a}\big)\cong L(\mathcal{A})\times C_2,
\]
where $C_2$ is a 2-chain and $(X, x)\le (X',x')$ in $L(\mathcal{A})\times C_2$ if and only if $X\le X'$ in $L(\mathcal{A})$ and $x\le x'$ in $C_2$. Namely, for all ${\bm \alpha}\in\mathbb{F}^d\setminus O$, the intersection semi-lattices $L(\mathcal{A}+ H_{\bm\alpha,a})$ are mutually isomorphic. So it remains to consider $\mathcal{A}+ H_{\bm\alpha,a}$ in the case ${\bm \alpha}\in O$, which is equivalent to studying one-element extensions of the hyperplane arrangement $\mathcal{A}/O:=\{H\cap O\ne\emptyset\mid H\in \mathcal{A}\}$ that is an essential arrangement in $O$.

\paragraph{3.2. More classifications.}
As an application of \autoref{classification-1}, we will investigate more classifications of  several special types of one-element extensions. When $\mathcal{A}=\big\{H_{\bm\alpha_i}\mid i\in[m]\big\}$ is an essential linear arrangement in $\mathbb{F}^d$, obviously $\mathcal{A}^o=\mathcal{A}$ and $L_{0}(\mathcal{A})=\{{\bf 0}\}$. Recall from the definitions of $\sigma\mathcal{A}$ in \eqref{adjoint-arrangement} and  $\tilde{\mathcal{A}}$ in \eqref{extension-adjoint-arrangement} that
\[
\sigma\mathcal{A}=\big\{H_{\bm u_i}:\bm u_i\cdot{\bm x}=0\mid i\in[n_{1}]\big\}
\]
and
\[
\tilde{\mathcal{A}}=\{H_{\tilde{\bf 0}}:x_{d+1}=0\}\cup\tilde{\mathcal{A}_1},\;{\rm where\;}\tilde{\mathcal{A}}_1=\big\{H_{\tilde{\bm u}_i}:\tilde{\bm u}_i\cdot{\bm x}=0\mid i\in[n_{1}]\big\}.
\]
Clearly $L(\sigma\mathcal{A})\cong L(\tilde{\mathcal{A}}/H_{\tilde{\bm 0}})\cong L(\tilde{\mathcal{A}}_1)$ in this case. Most recently, Fu-Wang \cite[Theorem 3.2]{Fu-Wang2021} have obtained the classification of the linear one-element extensions $\mathcal{A}+H_{\bm\alpha}$ of $\mathcal{A}$ for all $\bm\alpha\in\mathbb{F}^d$ by the adjoint arrangement $\sigma\mathcal{A}$. In fact, it can be regarded as a special case of \autoref{classification-1}. Next we will present a short proof of \autoref{classification-2}.
\begin{corollary}[\cite{Fu-Wang2021}]\label{classification-2}
Let $\mathcal{A}=\big\{H_{\bm\alpha_i}\mid i\in[m]\big\}$ be an essential linear arrangement in $\mathbb{F}^{d}$. Given $X\in L(\sigma\mathcal{A})$. For any vectors ${\bm \alpha},{\bm \alpha'}\in  M(\sigma\mathcal{A}/X)$, we have
\[
L(\mathcal{A}+ H_{\bm \alpha})\cong L(\mathcal{A}+H_{\bm\alpha'}).
\]
\end{corollary}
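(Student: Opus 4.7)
The plan is to derive \autoref{classification-2} as a direct specialization of \autoref{classification-1}. First I would observe that for a linear arrangement the extension $\mathcal{A}+H_{\bm\alpha}$ equals $\mathcal{A}+H_{\bm\alpha,0}$, so the map $\bm\alpha\mapsto(\bm\alpha,0)$ lets me view every linear one-element extension as a general one-element extension in which the new hyperplane passes through the origin. It then suffices to show that if $\bm\alpha,\bm\alpha'\in M(\sigma\mathcal{A}/X)$ for a common $X\in L(\sigma\mathcal{A})$, then the lifts $(\bm\alpha,0)$ and $(\bm\alpha',0)$ lie in the same stratum of the partition of $\mathbb{F}^{d+1}$ induced by $\tilde{\mathcal{A}}$.

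For this I would compute the inclusion-minimal element $X_{(\bm\alpha,0)}\in L(\tilde{\mathcal{A}})$ explicitly. Because the last coordinate vanishes, $(\bm\alpha,0)$ lies in $H_{\tilde{\bm 0}}$; and for each $\tilde{\bm u}_i=(\bm u_i,0)$ we have $\tilde{\bm u}_i\cdot(\bm\alpha,0)=\bm u_i\cdot\bm\alpha$, so $(\bm\alpha,0)\in H_{\tilde{\bm u}_i}$ if and only if $\bm\alpha\in H_{\bm u_i}\in\sigma\mathcal{A}$. Consequently
\[
X_{(\bm\alpha,0)}=H_{\tilde{\bm 0}}\cap\bigcap\{H_{\tilde{\bm u}_i}:\bm u_i\cdot\bm\alpha=0\},
\]
which under the isomorphism $L(\sigma\mathcal{A})\cong L(\tilde{\mathcal{A}}/H_{\tilde{\bm 0}})\cong L(\tilde{\mathcal{A}}_1)$ recalled just before the corollary corresponds exactly to the minimal element $X_{\bm\alpha}\in L(\sigma\mathcal{A})$ containing $\bm\alpha$. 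Hence $\bm\alpha,\bm\alpha'\in M(\sigma\mathcal{A}/X)$ forces $X_{(\bm\alpha,0)}=X_{(\bm\alpha',0)}$, so both lifted vectors belong to the common stratum $M(\tilde{\mathcal{A}}/X_{(\bm\alpha,0)})$. Applying \autoref{classification-1} then delivers $L(\mathcal{A}+H_{\bm\alpha,0})\cong L(\mathcal{A}+H_{\bm\alpha',0})$, which is precisely the asserted isomorphism.

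There is no serious obstacle here: since \autoref{classification-1} was set up to subsume the linear case, the only point requiring care is the compatibility of the poset isomorphism $L(\sigma\mathcal{A})\cong L(\tilde{\mathcal{A}}_1)$ with the lifting $\bm\alpha\mapsto(\bm\alpha,0)$, and the coordinate computation $\tilde{\bm u}_i\cdot(\bm\alpha,0)=\bm u_i\cdot\bm\alpha$ makes this compatibility transparent. Thus the argument reduces to a short bookkeeping verification rather than a new idea.
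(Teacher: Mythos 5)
Your proposal is correct and follows essentially the same route as the paper: both lift $\bm\alpha\mapsto(\bm\alpha,0)$, identify $L(\sigma\mathcal{A})$ with the elements of $L(\tilde{\mathcal{A}})$ lying in $H_{\tilde{\bm 0}}$ via $X\mapsto X\times\{0\}$, check that $\bm\alpha\in M(\sigma\mathcal{A}/X)$ exactly when $(\bm\alpha,0)\in M(\tilde{\mathcal{A}}/X\times\{0\})$, and then invoke \autoref{classification-1}. Your explicit computation of $X_{(\bm\alpha,0)}$ as the intersection of the hyperplanes of $\tilde{\mathcal{A}}$ containing $(\bm\alpha,0)$ is just a slightly more hands-on way of verifying the same compatibility that the paper derives from the partition \eqref{complement-partition}.
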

\begin{proof}
Note that the isomorphism $L(\sigma\mathcal{A})\cong L\big(\tilde{\mathcal{A}}/H_{\tilde{\bf0}}\big)$ sends $X\in L(\sigma\mathcal{A})$ to $X\times\{0\}\in L\big(\tilde{\mathcal{A}}/H_{\tilde{\bf0}}\big)$. It follows from \eqref{complement-partition} that
\[
\mathbb{F}^d=\bigsqcup_{X\in L(\sigma\mathcal{A})}M(\sigma\mathcal{A}/X)\;\And\;H_{\tilde{\bm0}}=\bigsqcup_{Y\in L(\tilde{\mathcal{A}}/H_{\tilde{\bm0}})} M\big((\tilde{\mathcal{A}}/H_{\tilde{\bm0}})/Y\big)=\bigsqcup_{X\in L(\sigma\mathcal{A})} M\big(\tilde{\mathcal{A}}/X\times\{0\}\big).
\]
It means that $\bm\alpha\in M(\sigma\mathcal{A}/X)$ if and only if $(\bm\alpha,0)\in M\big(\tilde{\mathcal{A}}/X\times\{0\}\big)$. Since $L(\tilde{\mathcal{A}}/H_{\tilde{\bm0}})\subseteq L(\tilde{\mathcal{A}})$ and $H_{\bm\alpha}=H_{\bm\alpha,0}$,  \autoref{classification-1} implies \autoref{classification-2}.
\end{proof}

Suppose $\mathcal{A}$ is an essential linear arrangement in $\mathbb{F}^d$. As another byproduct of \autoref{classification-1}, the partition $\mathbb{F}^{d+1}=\bigsqcup_{X\in L(\sigma\mathcal{A})}\big(M(\sigma\mathcal{A}/X)\times\{0\}\bigsqcup M(\sigma\mathcal{A}/X)\times\{\mathbb{F}\setminus\{0\}\}\big)$ classifies the one-element extensions of the linear arrangement $\mathcal{A}$ as follows.
\begin{corollary}\label{classification-3}
Let $\mathcal{A}=\big\{H_{\bm\alpha_i}\mid i\in[m]\big\}$ be an essential linear arrangement in $\mathbb{F}^{d}$. Given $X\in L(\sigma\mathcal{A})$. For any vectors ${\bm \alpha},{\bm \alpha'}\in  M(\sigma\mathcal{A}/X)$ and $a,a'\in\mathbb{F}\setminus\{0\}$, we have
\[
L(\mathcal{A}+ H_{\bm \alpha})\cong L(\mathcal{A}+H_{\bm\alpha'})\quad\And\quad
L(\mathcal{A}+ H_{\bm\alpha,a})\cong L(\mathcal{A}+H_{\bm\alpha',a'}).
\]
\end{corollary}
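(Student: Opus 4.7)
The plan is to reduce this corollary to \autoref{classification-1} by exploiting the product structure of $\tilde{\mathcal{A}}_1$ that arises when $\mathcal{A}$ is a linear arrangement. The first isomorphism $L(\mathcal{A}+H_{\bm\alpha})\cong L(\mathcal{A}+H_{\bm\alpha'})$ is precisely \autoref{classification-2} restated, so the real content lies in the second isomorphism.

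First I would record the structure of $L(\tilde{\mathcal{A}})$. Since $\mathcal{A}$ is an essential linear arrangement, $L_0(\mathcal{A})=\{\bm 0\}$ forces $\tilde{\mathcal{A}}_0=\{H_{\tilde{\bm 0}}\}$ with $H_{\tilde{\bm 0}}:x_{d+1}=0$, while each $H_{\tilde{\bm u}_i}\in\tilde{\mathcal{A}}_1$ has the product form $H_{\bm u_i}\times\mathbb{F}$ in $\mathbb{F}^{d+1}$. Consequently, any intersection drawn from $\tilde{\mathcal{A}}_1$ is of the form $W\times\mathbb{F}$ with $W\in L(\sigma\mathcal{A})$, and every element of $L(\tilde{\mathcal{A}})$ is either $W\times\mathbb{F}$ or $W\times\{0\}$ for some $W\in L(\sigma\mathcal{A})$.

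Next I would identify the inclusion-minimal element $X_{(\bm\alpha,a)}\in L(\tilde{\mathcal{A}})$ when $\bm\alpha\in M(\sigma\mathcal{A}/X)$ and $a\in\mathbb{F}\setminus\{0\}$. Since $(\bm\alpha,a)\notin H_{\tilde{\bm 0}}$, the minimum $X_{(\bm\alpha,a)}$ cannot be contained in $H_{\tilde{\bm 0}}$, so it must have the form $W\times\mathbb{F}$. The condition $(\bm\alpha,a)\in W\times\mathbb{F}$ is equivalent to $\bm\alpha\in W$, so by minimality $W=X$, giving $X_{(\bm\alpha,a)}=X\times\mathbb{F}$. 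The same identification applied to $(\bm\alpha',a')$ yields $X_{(\bm\alpha',a')}=X\times\mathbb{F}$. Thus both points lie in the common stratum $M(\tilde{\mathcal{A}}/X\times\mathbb{F})$, and \autoref{classification-1} immediately delivers $L(\mathcal{A}+H_{\bm\alpha,a})\cong L(\mathcal{A}+H_{\bm\alpha',a'})$.

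The plan poses no substantial obstacle; the only subtlety is keeping track of the two embeddings $L(\sigma\mathcal{A})\hookrightarrow L(\tilde{\mathcal{A}})$ given by $W\mapsto W\times\mathbb{F}$ and $W\mapsto W\times\{0\}$, which together exhaust $L(\tilde{\mathcal{A}})$ and explain exactly why the partition $\mathbb{F}^{d+1}=\bigsqcup_{X\in L(\sigma\mathcal{A})}\bigl(M(\sigma\mathcal{A}/X)\times\{0\}\sqcup M(\sigma\mathcal{A}/X)\times(\mathbb{F}\setminus\{0\})\bigr)$ flagged before the statement coincides with the $L(\tilde{\mathcal{A}})$-stratification. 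Once this bookkeeping is in place the argument collapses to a one-line invocation of \autoref{classification-1}.
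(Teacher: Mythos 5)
Your proposal is correct and follows essentially the same route as the paper: both reduce the second isomorphism to \autoref{classification-1} by identifying the stratum of any $(\bm\alpha,a)$ with $\bm\alpha\in M(\sigma\mathcal{A}/X)$ and $a\ne 0$ as $M(\tilde{\mathcal{A}}/X\times\mathbb{F})$, using that off $H_{\tilde{\bm 0}}$ only the hyperplanes of $\tilde{\mathcal{A}}_1\cong\sigma\mathcal{A}$ are relevant, and both quote \autoref{classification-2} for the first isomorphism. The bookkeeping of the two embeddings $W\mapsto W\times\mathbb{F}$ and $W\mapsto W\times\{0\}$ matches the paper's decomposition $L(\tilde{\mathcal{A}})=L(\tilde{\mathcal{A}}_1)\sqcup L(\tilde{\mathcal{A}}/H_{\tilde{\bm 0}})$, so no gap remains.
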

\begin{proof}
Clearly the first isomorphism in \autoref{classification-3} follows from \autoref{classification-2} directly. Next we will prove the second isomorphism in \autoref{classification-3}. Note from $\bigcap_{H\in L(\tilde{\mathcal{A}}_1)}H=0^d\times\mathbb{F}$ that the isomorphism $L(\sigma\mathcal{A})\cong L(\tilde{\mathcal{A}}_1)$ sends $X\in L(\sigma\mathcal{A})$ to $X\times\mathbb{F}\in L(\tilde{\mathcal{A}}_1)$. In this case, we have $L(\tilde{\mathcal{A}})=L(\tilde{\mathcal{A}}_1)\sqcup L(\tilde{\mathcal{A}}/H_{\tilde{\bm0}})$ obviously. Together with $H_{\tilde{\bm0}}=\bigsqcup_{X\in L(\tilde{\mathcal{A}}/H_{\tilde{\bm0}})}M(\tilde{\mathcal{A}}/X)$, it follows from decomposition \eqref{complement-partition} that
\[
\mathbb{F}^{d+1}\setminus H_{\tilde{\bm0}}=\bigsqcup_{Y\in L(\tilde{\mathcal{A}}_1)}M(\tilde{\mathcal{A}}/Y)=\bigsqcup_{X\in L(\sigma\mathcal{A})}M(\tilde{\mathcal{A}}/X\times\mathbb{F}) \;\And\; \mathbb{F}^d=\bigsqcup_{X\in L(\sigma\mathcal{A})}M(\sigma\mathcal{A}/X).
\]
Immediately, the above equalities imply that for any $(\bm\alpha,a)\in\mathbb{F}^d\times\{\mathbb{F}\setminus\{0\}\}=\mathbb{F}^{d+1}\setminus H_{\tilde{\bm0}}$, $\bm\alpha\in M(\sigma\mathcal{A}/X)$ if and only if $(\bm\alpha,a)\in M(\tilde{\mathcal{A}}/X\times\mathbb{F})$. So \autoref{classification-1} means that the second isomorphism in \autoref{classification-3} holds as well.
\end{proof}

In end of this section, we will further give the classification of the linear one-element extensions of the hyperplane arrangement $\mathcal{A}$. Recall
$L_{0}(\mathcal{A})=\{{\bm v}_1,{\bm v}_2,\ldots,{\bm v}_{n_0}\}$. Let
\[
\bar{\mathcal{A}}:=\mathcal{A}_0\cup\sigma\mathcal{A}^o
\]
be a linear arrangement in $\mathbb{F}^d$, where $\mathcal{A}_0=\big\{H_{\bm v_i}:\bm v_i\cdot\bm x=0\mid i\in[n_0]\big\}$. Likewise, the whole space $\mathbb{F}^d$ has a partition $\mathbb{F}^d=\bigsqcup_{X\in L(\bar{\mathcal{A}})}M(\bar{\mathcal{A}}/X)$, which determines the following classification.
\begin{corollary}\label{classification-4}
Suppose $\mathcal{A}$ is essential. Given $X\in L(\bar{\mathcal{A}})$. For any ${\bm\alpha},{\bm\alpha'}\in  M(\bar{\mathcal{A}}/X)$, we have
\[
L(\mathcal{A}+ H_{\bm \alpha})\cong L(\mathcal{A}+H_{\bm\alpha'}).
\]
\end{corollary}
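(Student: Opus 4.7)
The plan is to deduce \autoref{classification-4} directly from \autoref{classification-1}, by matching the $\bar{\mathcal{A}}$-stratification of $\mathbb{F}^d$ with the slice of the $\tilde{\mathcal{A}}$-stratification of $\mathbb{F}^{d+1}$ along the hyperplane $H_{\tilde{\bm 0}}:x_{d+1}=0$. This parallels the derivations of \autoref{classification-2} and \autoref{classification-3}, with the added twist that both pieces $\mathcal{A}_0$ and $\sigma\mathcal{A}^o$ of $\bar{\mathcal{A}}$ come into play, since for a general affine $\mathcal{A}$ the set $L_0(\mathcal{A})$ no longer collapses to $\{\bm 0\}$.

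The key computation is a direct containment check. Using $\tilde{\bm v}_i=(\bm v_i,-1)$ and $\tilde{\bm u}_j=(\bm u_j,0)$, one immediately verifies that $(\bm\alpha,0)\in H_{\tilde{\bm v}_i}$ if and only if $\bm v_i\cdot\bm\alpha=0$, i.e.\ $\bm\alpha\in H_{\bm v_i}$, and $(\bm\alpha,0)\in H_{\tilde{\bm u}_j}$ if and only if $\bm\alpha\in H_{\bm u_j}$. Consequently the subcollection of $\tilde{\mathcal{A}}$ passing through $(\bm\alpha,0)$ is in natural bijection with the subcollection of $\bar{\mathcal{A}}=\mathcal{A}_0\cup\sigma\mathcal{A}^o$ passing through $\bm\alpha$.

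Next, the hypothesis $\bm\alpha,\bm\alpha'\in M(\bar{\mathcal{A}}/X)$ is equivalent to saying that $\bm\alpha$ and $\bm\alpha'$ meet exactly the same subcollection of $\bar{\mathcal{A}}$; by the bijection just established, $(\bm\alpha,0)$ and $(\bm\alpha',0)$ then meet exactly the same subcollection of $\tilde{\mathcal{A}}$, so they share a common minimal flat $Y\in L(\tilde{\mathcal{A}})$ and both belong to $M(\tilde{\mathcal{A}}/Y)$. Since $H_{\bm\alpha}=H_{\bm\alpha,0}$, applying \autoref{classification-1} to the two vectors $(\bm\alpha,0)$ and $(\bm\alpha',0)$ yields $L(\mathcal{A}+H_{\bm\alpha})\cong L(\mathcal{A}+H_{\bm\alpha'})$.

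No serious obstacle arises; the whole statement is a one-line consequence of \autoref{classification-1} once the containment dictionary above is written down. The only point that deserves attention is the necessity of retaining $\mathcal{A}_0$ alongside $\sigma\mathcal{A}^o$ in $\bar{\mathcal{A}}$, reflecting the presence of the subarrangement $\tilde{\mathcal{A}}_0$ inside $\tilde{\mathcal{A}}$: omitting $\mathcal{A}_0$ would allow two vectors in a common $\sigma\mathcal{A}^o$-stratum but distinct $\mathcal{A}_0$-strata, whose images $(\bm\alpha,0),(\bm\alpha',0)$ would lie in different $\tilde{\mathcal{A}}$-strata, so that \autoref{classification-1} would not apply.
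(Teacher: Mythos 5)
Your proposal is correct and takes essentially the same route as the paper: both reduce to \autoref{classification-1} by passing from $\bm\alpha$ to $(\bm\alpha,0)$ and checking that two vectors in a common stratum of $\bar{\mathcal{A}}$ yield points $(\bm\alpha,0),(\bm\alpha',0)$ lying in a common stratum $M(\tilde{\mathcal{A}}/Y)$. The only cosmetic difference is how that matching is justified: the paper goes through the restriction $\tilde{\mathcal{A}}/H_{\tilde{\bm 0}}$, the isomorphism $L(\bar{\mathcal{A}})\cong L(\tilde{\mathcal{A}}/H_{\tilde{\bm 0}})$ and the containment $M\big((\tilde{\mathcal{A}}/H_{\tilde{\bm 0}})/Z\big)\subseteq M(\tilde{\mathcal{A}}/Y)$, whereas you read it off directly from the incidence-pattern description of the strata of \eqref{complement-partition}, which is an equivalent argument.
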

\begin{proof}
Consider the hyperplane $H_{\tilde{\bm0}}:x_{d+1}=0$. Let $\tilde{\mathcal{A}}/H_{\tilde{\bm0}}:=\{H\cap H_{\tilde{\bm0}}\ne\emptyset\mid H\in\tilde{\mathcal{A}}\setminus H_{\tilde{\bm0}}\}$ ($\tilde{\mathcal{A}}$ may not contain $H_{\tilde{\bm0}}$.). Recall from the definition $\tilde{\mathcal{A}}$ in \eqref{extension-adjoint-arrangement} that clearly $L(\bar{\mathcal{A}})$ is isomorphic to $L(\tilde{\mathcal{A}}/H_{\tilde{\bm0}})$ with sending $X\in L(\bar{\mathcal{A}})$ to $X\times\{0\}\in L(\tilde{\mathcal{A}}/H_{\tilde{\bm0}})$. It implies that $\bm\alpha\in M\big(\bar{\mathcal{A}}/X\big)$ if and only if $(\bm\alpha,0)\in M\big((\tilde{\mathcal{A}}/H_{\tilde{\bm0}})/X\times\{0\}\big)$. So, to prove the corollary, we only need to show that
for any $(\bm\alpha,0),(\bm\alpha',0)\in M\big((\tilde{\mathcal{A}}/H_{\tilde{\bm0}})/X\times\{0\}\big)$ the result holds. Recall from \eqref{complement-partition} that given $\bm x\in H_{\tilde{\bm0}}$, there are unique elements $Y\in L(\tilde{\mathcal{A}})$ and $Z\in L(\tilde{\mathcal{A}}/H_{\tilde{\bm0}})$ such that $\bm x\in M(\tilde{\mathcal{A}}/Y)$ and $\bm x\in M\big((\tilde{\mathcal{A}}/H_{\tilde{\bm 0}})/Z\big)$ respectively. In this case, we claim that
\[
M\big((\tilde{\mathcal{A}}/H_{\tilde{\bm 0}})/Z\big)\subseteq M(\tilde{\mathcal{A}}/Y).
\]
This says that for any $(\bm\alpha,0),(\bm\alpha',0)\in M\big((\tilde{\mathcal{A}}/H_{\tilde{\bm 0}})/Z\big)$, we have $(\bm\alpha,0),(\bm\alpha',0)\in M(\tilde{\mathcal{A}}/Y)$. Immediately \autoref{classification-1} indicates $
L(\mathcal{A}+H_{\bm\alpha})\cong L(\mathcal{A}+H_{\bm\alpha'})$. Hence, to complete the proof, it remains to verify the claim. Note from $\bm x\in H_{\tilde{\bm 0}}$ and $\bm x\in M(\tilde{\mathcal{A}}/Y)$ that $Y\cap H_{\tilde{\bm 0}}\in L(\tilde{\mathcal{A}}/H_{\tilde{\bm 0}})$ contains $\bm x$ and $\bm x\notin Y\cap H$ for all $H\in\tilde{\mathcal{A}}\setminus\tilde{\mathcal{A}}_{  Y}$. It implies that $\bm x\notin \bigcup_{H\in\tilde{\mathcal{A}}\setminus\tilde{\mathcal{A}}_{  Y}} Y\cap H_{\tilde{\bm0}}\cap H$ and
\begin{equation}\label{partition-1}
M\big((\tilde{\mathcal{A}}/H_{\tilde{\bm0}})/Y\cap H_{\tilde{\bm0}}\big)=\big(Y\cap H_{\tilde{\bm 0}}\big)\setminus\bigcup_{H\in\tilde{\mathcal{A}}\setminus\tilde{\mathcal{A}}_Y}(Y\cap H_{\tilde{\bm0}}\cap H)=\big(Y\cap H_{\tilde{\bm 0}}\big)\setminus\bigcup_{H\in\tilde{\mathcal{A}}\setminus\tilde{\mathcal{A}}_Y}H.
\end{equation}
Hence, we have $\bm x\in M\big((\tilde{\mathcal{A}}/H_{\tilde{\bm0}})/Y\cap H_{\tilde{\bm0}}\big)$. Immediately the uniqueness of $Z$ implies $Z=Y\cap H_{\tilde{\bm 0}}$. Suppose $\bm y\in M\big((\tilde{\mathcal{A}}/H_{\tilde{\bm 0}})/Z\big)$, then $\bm y\in \big(Y\cap H_{\tilde{\bm 0}}\big)\setminus\bigcup_{H\in\tilde{\mathcal{A}}\setminus\tilde{\mathcal{A}}_{  Y}}H$ from \eqref{partition-1}. It follows from $Y\cap H_{\tilde{\bm 0}}\subseteq Y$ that $\bm y\in Y\setminus\bigcup_{H\in\tilde{\mathcal{A}}\setminus\tilde{\mathcal{A}}_{  Y}}H$, i.e., $\bm y\in M(\tilde{\mathcal{A}}/Y)$. Hence, we obtain $M\big((\tilde{\mathcal{A}}/H_{\tilde{\bm 0}})/Z\big)\subseteq M(\tilde{\mathcal{A}}/Y)$,
which finishes the proof.
\end{proof}

\section{Uniform comparisons}\label{SEC3}
\paragraph{4.1. Proof of \autoref{comparison-1}.}
To prove \autoref{comparison-1}, it requires Whitney's celebrated NBC (no broken circuit) theorem \cite{Whitney1932}, which is an important tool to computing the unsigned Whitney number of the first kind. For any subset $J\subseteq[m]$ with $\bigcap_{j\in J}H_{\bm\alpha_j,a_j}\ne\emptyset$, if $d-{\rm dim}(\bigcap_{j\in J}H_{\bm\alpha_j,a_j})=\#J$, then it is called {\em affinely independent}, otherwise called {\em affinely dependent} with respect to $\mathcal{A}$. It is easily seen that subsets $J$ with $\bigcap_{j\in J}H_{\bm\alpha_j,a_j}=\emptyset$ are irrelevant to affine independence and dependence. Moreover, given a subset $J\subseteq[m]$, define $\rank_{ \mathcal{A}}(J)$ (with respect to $\mathcal{A}$) to be
\[
\rank_{ \mathcal{A}}(J):=\max\big\{\#I\mid I\subseteq J\And I\mbox{ is affinely independent with respect to }\mathcal{A}\big\}.
\]
In particular, if $\bigcap_{j\in J}H_{\bm\alpha_j,a_j}\ne\emptyset$, the maximal affinely independent subset $I$ of $J$ is called a {\em basis} of $J$, which means  $\rank_{ \mathcal{A}}(J)=\#I$ and $\bigcap_{j\in I}H_{\bm\alpha_j,a_j}=\bigcap_{j\in J}H_{\bm\alpha_j,a_j}$. Clearly, $\rank_{ \mathcal{A}}(J)\le \#J$. If $\bigcap_{j\in J}H_{\bm\alpha_j,a_j}\ne\emptyset$, then $J$ is affinely independent (affinely dependent resp.) if and only if $\rank_{\mathcal{A}}(J)=\#J$ ($\rank_{ \mathcal{A}}(J)<\#J$ resp.) with respect to $\mathcal{A}$. Additionally, a minimal affinely dependent subset $I$ of $[m]$ is said to be an {\em affine circuit} with respect to $\mathcal{A}$, i.e., $\bigcap_{i\in I}H_{\bm\alpha_i,a_i}\ne\emptyset$ and $\rank_{ \mathcal{A}}(I)={\rm rank}_{ \mathcal{A}}\big(I\setminus \{i\}\big)=\#I-1$ for all $i\in I$. Given a total order $\prec$ on $[m]$, an {\em affine broken circuit} is a subset of $[m]$ obtained from an affine circuit by deleting  the minimal element. An {\em affine NBC set} is a subset of $[m]$ containing no affine broken circuits. It follows that every affine NBC set is affinely independent. Denote by ${\rm NBC}_{k}(\mathcal{A})$ the collection of affine NBC $k$-subsets of $[m]$. Here the affine NBC sets are exactly the $\chi$-independent sets defined in \cite[p. 72]{Orlik-Terao1992}, which gives a counting interpretation of the unsigned Whitney number $w_k^+(\mathcal{A})$ of the first kind. Generally, a hyperplane arrangement can be regarded as a semimatroid. For more information about affine broken circuits of hyperplane arrangements can refer to \cite{Forge-Zaslavsky2016}.
\begin{theorem}[Affine NBC Theorem \cite{Orlik-Terao1992}]\label{affine-NBC} Let $\mathcal{A}$ be a hyperplane arrangement in a $d$-dimensional vector space $V$. Then
\[
 w_k^+(\mathcal{A})=\#{\rm NBC}_k(\mathcal{A}),\quad k=0,1,\ldots,d.
\]
\end{theorem}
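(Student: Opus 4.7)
The plan is to prove the identity $w_k^+(\mathcal{A}) = \#\NBC_k(\mathcal{A})$ by induction on $m = \#\mathcal{A}$, showing that both sides satisfy the same deletion--restriction recursion with respect to the $\prec$-maximal hyperplane $H := H_{\bm\alpha_m,a_m}$. Set $\mathcal{A}' := \mathcal{A}\setminus\{H\}$ and $\mathcal{A}/H := \{H_{\bm\alpha_j,a_j}\cap H \ne \emptyset \mid j\in[m-1]\}$, equipped with the ordering inherited from $\prec$. The base case $m=0$ is trivial: $\chi(\mathcal{A},t) = t^d$ gives $w_k^+(\mathcal{A}) = \delta_{k,0}$, matching $\#\NBC_k(\mathcal{A})$ since $\emptyset$ is the only affine NBC set.

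For the Whitney-number recursion, I invoke the classical deletion--restriction identity
\[
\chi(\mathcal{A},t) = \chi(\mathcal{A}',t) - \chi(\mathcal{A}/H,t),
\]
which follows from Möbius inclusion--exclusion on $L(\mathcal{A})$, with the convention that $\chi(\mathcal{A}/H,t) = 0$ when $H$ is parallel to every hyperplane of $\mathcal{A}'$ (a case to be reconciled separately). Comparing coefficients of $t^{d-k}$ yields
\[
w_k^+(\mathcal{A}) = w_k^+(\mathcal{A}') + w_{k-1}^+(\mathcal{A}/H).
\]

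On the NBC side, I partition $\NBC_k(\mathcal{A})$ according to whether $m$ belongs to a given NBC set $S$. If $m \notin S$, then since $m$ is $\prec$-maximal, every affine circuit of $\mathcal{A}$ contained in $[m-1]$ is an affine circuit of $\mathcal{A}'$ and conversely; hence $S \in \NBC_k(\mathcal{A})$ iff $S \in \NBC_k(\mathcal{A}')$. If $m \in S$, affine independence of $S$ forces $H_{\bm\alpha_j,a_j}\cap H \ne \emptyset$ for all $j \in S\setminus\{m\}$, so $S \mapsto S\setminus\{m\}$ gives a map into the index set of $\mathcal{A}/H$. The crux is to verify this map is a bijection onto $\NBC_{k-1}(\mathcal{A}/H)$: using the rank identity $\rank_{\mathcal{A}}(T\cup\{m\}) = \rank_{\mathcal{A}/H}(T) + 1$ for admissible $T \subseteq [m-1]$, affine circuits of $\mathcal{A}$ containing $m$ correspond to affine circuits of $\mathcal{A}/H$ via deletion of $m$; because $m$ is maximal, the minimum of such a circuit is preserved under deletion, so broken circuits match up. Summing both halves of the partition and invoking the inductive hypothesis closes the argument.

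The main obstacle is the circuit correspondence underpinning the NBC recursion: one must carefully verify the rank identity under restriction using the semimatroid axioms, and reconcile degenerate cases such as when $H$ is parallel to all of $\mathcal{A}'$ (in which case $\mathcal{A}/H$ is empty, and the partition $\mathbb{F}^d = (\mathbb{F}^d \setminus H) \sqcup H$ directly yields $\chi(\mathcal{A},t) = (t-1)\chi(\mathcal{A}',t)$; on the NBC side, $\{m\}$ is then the only new NBC set involving $m$). Handling these boundary cases with the correct conventions on $\chi$ of the empty arrangement is routine but must be checked explicitly.
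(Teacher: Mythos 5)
The paper itself gives no proof of this statement: it is quoted from Orlik--Terao (the affine NBC sets are exactly their $\chi$-independent sets), so the only comparison available is with the standard literature argument, which your deletion--restriction induction on the $\prec$-maximal hyperplane essentially reproduces. The skeleton is right: compare coefficients in $\chi(\mathcal{A},t)=\chi(\mathcal{A}',t)-\chi(\mathcal{A}/H,t)$ against the partition of $\NBC_k(\mathcal{A})$ according to whether the set contains $m$; the base case and the ``$m\notin S$'' half are fine.

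However, two steps are wrong or missing as written. First, the degenerate case: the convention $\chi(\mathcal{A}/H,t)=0$ makes the recursion false, and your separate fix $\chi(\mathcal{A},t)=(t-1)\chi(\mathcal{A}',t)$ is also false --- if every hyperplane of $\mathcal{A}'$ is parallel to $H$, then $\mathcal{A}$ is a pencil of $m$ parallel hyperplanes, so $\chi(\mathcal{A},t)=t^{d-1}(t-m)$, while $(t-1)\chi(\mathcal{A}',t)=(t-1)t^{d-1}(t-m+1)$ has the wrong degree and values. No convention is needed: the empty restriction is the empty arrangement in $H$ with characteristic polynomial $t^{d-1}$, deletion--restriction holds verbatim, and the unique new NBC set $\{m\}$ matches the increment $w_1^+(\mathcal{A})=w_1^+(\mathcal{A}')+1$; with your formulas the induction breaks in exactly this case. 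Second, the crux lemma: the asserted correspondence ``affine circuits of $\mathcal{A}$ containing $m$ $\leftrightarrow$ affine circuits of $\mathcal{A}/H$ via deletion of $m$'' is only a one-way map, since a circuit of $\mathcal{A}/H$ may instead be an affine circuit of $\mathcal{A}$ avoiding $m$ whose intersection meets $H$; hence ``the minimum is preserved, so broken circuits match up'' does not yet prove that $S\cup\{m\}$ is NBC in $\mathcal{A}$ if and only if $S$ is NBC in $\mathcal{A}/H$. One needs both containment directions on broken circuits (from a broken circuit of $\mathcal{A}$ inside $S\cup\{m\}$ produce a circuit $C'$ of $\mathcal{A}/H$ inside $C\setminus\{m\}$ and check $\min C'=\min C$ or $C'\subseteq S$; conversely lift a circuit of $\mathcal{A}/H$ to a circuit of $\mathcal{A}$ inside $C'\cup\{m\}$ and track its minimum), together with the nonempty-intersection caveats. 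Relatedly, distinct $H_i,H_j$ can restrict to the same hyperplane of $\mathcal{A}/H$, so the restriction must be treated as a labelled multiarrangement (as the paper does before equation (6) for its own NBC computations); otherwise ``the index set of $\mathcal{A}/H$'', the inherited order, and the application of the inductive hypothesis are not well defined --- the parallel labels create $2$-element circuits in $\mathcal{A}/H$ whose broken circuits are precisely what reconciles the labelled NBC count with $\chi(\mathcal{A}/H,t)$. All of this is fixable and then yields the standard proof, but as it stands these are genuine gaps.
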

Here we may consider that given $X\in L(\mathcal{A})$, $\mathcal{A}/X$ is a multi-arrangement in $X$ consisting of hyperplane $H\cap X\ne\emptyset$ for all $H\in\mathcal{A}\setminus\mathcal{A}_{  X}$. In addition, the label of each hyperplane in $\mathcal{A}/X$ is inherited from $\mathcal{A}$, and these labels of hyperplanes with respect to $\mathcal{A}/X$ inherit the total order on $[m]$ with respect to $\mathcal{A}$. Applying \autoref{affine-NBC} to equation \eqref{Whitney-Polynomial-I}, together with \eqref{Whitney-Polynomial}, immediately we obtain
\begin{equation}\label{NBC-Theorem}
c_{ij}(\mathcal{A})=\sum_{X\in L(\mathcal{A}),\,\dim(X)=i}\#{\rm NBC}_j(\mathcal{A}/X).
\end{equation}
To verify \autoref{comparison-1}, we need the following lemmas.
\begin{lemma}\label{NBC-lemma1}
Suppose $\mathcal{A}$ is essential. Given $(\bm\alpha_{m+1},a_{m+1}),(\bm\alpha'_{m+1},a'_{m+1})\in\mathbb{F}^{d+1}$. If $X_{(\bm\alpha_{m+1},a_{m+1})}\subseteq X_{(\bm\alpha'_{m+1},a'_{m+1})}$ for $X_{(\bm\alpha_{m+1},a_{m+1})}, X_{(\bm\alpha'_{m+1},a'_{m+1})}\in L(\tilde{\mathcal{A}})$, then
\[
\rank_{ \mathcal{A}+H_{\bm\alpha_{m+1},a_{m+1}}}\big(J\sqcup\{m+1\}\big)\le \rank_{ \mathcal{A}+H_{\bm\alpha'_{m+1},a'_{m+1}}}\big(J\sqcup\{m+1\}\big), \; J\subseteq[m].
\]
\end{lemma}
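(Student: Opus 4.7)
The plan is to reduce this statement, which concerns an arbitrary index subset $J\subseteq[m]$, to the flat-localization case already proved in \autoref{lemma1}. The bridge is the observation that the affine rank $\rank_\mathcal{A}(J)$, defined through affinely independent subsets, in fact coincides with the linear rank of the normal vectors $\{\bm\alpha_j\mid j\in J\}$; granting this, any index subset $J$ is essentially controlled by a single flat in $L(\mathcal{A})$.

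First I would verify the identity $\rank_\mathcal{A}(J)=\rank\{\bm\alpha_j\mid j\in J\}$. The inequality $\le$ is immediate, since every affinely independent subset has linearly independent normals by definition. For the reverse inequality, choose $I_0\subseteq J$ so that $\{\bm\alpha_i\mid i\in I_0\}$ is a maximal linearly independent subfamily of $\{\bm\alpha_j\mid j\in J\}$; the affine system $\bm\alpha_i\cdot\bm x=a_i$ $(i\in I_0)$ is then automatically consistent (independent normals extend to a basis and a solution is constructed coordinate-wise), so $I_0$ is affinely independent and realizes the claimed rank. Applied to the enlarged arrangement and to the index subset $J\sqcup\{m+1\}$, this gives
\begin{equation*}
\rank_{\mathcal{A}+H_{\bm\alpha_{m+1},a_{m+1}}}(J\sqcup\{m+1\})=\rank\big(\{\bm\alpha_j\mid j\in J\}\cup\{\bm\alpha_{m+1}\}\big),
\end{equation*}
and the analogous equality with $(\bm\alpha_{m+1},a_{m+1})$ replaced by $(\bm\alpha'_{m+1},a'_{m+1})$.

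Next I would route the comparison through a well-chosen flat. With $I_0$ as above, put $Y:=\bigcap_{i\in I_0}H_{\bm\alpha_i,a_i}\in L(\mathcal{A})$, of codimension $|I_0|$. Every normal of a hyperplane in $\mathcal{A}_Y$ is orthogonal to the direction of $Y$, while $\{H_{\bm\alpha_i,a_i}\mid i\in I_0\}\subseteq\mathcal{A}_Y$ already supplies a spanning family for that orthogonal complement. Hence $\span\{\bm\alpha_j\mid j\in J\}=\span\{\bm\alpha_i\mid H_{\bm\alpha_i,a_i}\in\mathcal{A}_Y\}$, which together with the previous display yields
\begin{equation*}
\rank_{\mathcal{A}+H_{\bm\alpha_{m+1},a_{m+1}}}(J\sqcup\{m+1\})=\rank_{\mathcal{A}+H_{\bm\alpha_{m+1},a_{m+1}}}(\mathcal{A}_Y+H_{\bm\alpha_{m+1},a_{m+1}}),
\end{equation*}
and the analogous identity for $(\bm\alpha'_{m+1},a'_{m+1})$. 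Invoking \autoref{lemma1} at this $Y$ then delivers the inequality in the statement.

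The main point of care is the coincidence of the two notions of rank; once that bookkeeping identity is in hand, the lemma is essentially a rewriting of \autoref{lemma1}. A small auxiliary check is the equality of spans in the localization step, which however follows immediately from $\{H_{\bm\alpha_i,a_i}\mid i\in I_0\}\subseteq\mathcal{A}_Y$ together with a dimension count.
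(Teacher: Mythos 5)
Your proposal is correct and takes essentially the same route as the paper: both arguments reduce the statement to \autoref{lemma1} by passing to the flat $Y=\bigcap_{i\in I_0}H_{\bm\alpha_i,a_i}$ cut out by a maximal independent subset $I_0$ of $J$, the paper doing this by contradiction while you argue directly via the identity between the affine rank and the linear rank of the normals (which the paper uses implicitly through \eqref{rank} and \eqref{rank-identity}). No gaps.
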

\begin{proof}
Given $J\subseteq[m]$. Clearly
\[
\rank_{ \mathcal{A}+H_{\bm\alpha_{m+1},a_{m+1}}}(J\sqcup\{m+1\}),\rank_{ \mathcal{A}+H_{\bm\alpha'_{m+1},a'_{m+1}}}(J\sqcup\{m+1\})
\in\{\rank_{ \mathcal{A}}(J),\rank_{ \mathcal{A}}(J)+1\}.
\]
Suppose $\rank_{ \mathcal{A}+H_{\bm\alpha_{m+1},a_{m+1}}}(J\sqcup\{m+1\})>\rank_{ \mathcal{A}+H_{\bm\alpha'_{m+1},a'_{m+1}}}(J\sqcup\{m+1\})$. In other words, $\rank_{ \mathcal{A}+H_{\bm\alpha_{m+1},a_{m+1}}}(J\sqcup\{m+1\})=\rank_{ \mathcal{A}}(J)+1$ and $\rank_{ \mathcal{A}+H_{\bm\alpha'_{m+1},a'_{m+1}}}(J\sqcup\{m+1\})=\rank_{ \mathcal{A}}(J)$. This implies that there is a maximal affinely independent subset $I$ of $J$ with respect to $\mathcal{A}$ such that $\bigcap_{i\in I}H_{\bm\alpha_i,a_i}\bigcap H_{\bm\alpha_{m+1},a_{m+1}}\ne\emptyset$,  $\bigcap_{i\in I}H_{\bm\alpha_i,a_i}$ and $\bigcap_{i\in I}H_{\bm\alpha_i,a_i}\bigcap H_{\bm\alpha'_{m+1},a'_{m+1}}=\emptyset$ or $\bigcap_{i\in I}H_{\bm\alpha_i,a_i}\subseteq H_{\bm\alpha'_{m+1},a'_{m+1}}$. Let $Y=\bigcap_{i\in I}H_{\bm\alpha_i,a_i}$. Recall from \eqref{rank} that we can easily obtain
\[
\rank_{ \mathcal{A}+H_{\bm\alpha_{m+1},a_{m+1}}}(\mathcal{A}_{  Y}+H_{\bm\alpha_{m+1},a_{m+1}})=\#I+1>\#I=\rank_{ \mathcal{A}+H_{\bm\alpha_{m+1},a_{m+1}}}(\mathcal{A}_{  Y}+H_{\bm\alpha'_{m+1},a'_{m+1}}).
\]
This is impossible from \autoref{lemma1}, which completes the proof.
\end{proof}
Given $(\bm\alpha_{m+1},a_{m+1}),(\bm\alpha_{m+1}',a_{m+1}')\in \mathbb{F}^{d+1}$. For convenience, for $i\in[m]$, we denote by $H_{\bm\alpha'_i,a'_i}=H_{\bm\alpha_i,a_i}$ in $\mathcal{A}+H_{\bm\alpha'_{m+1},a'_{m+1}}$ in this subsection. Given an element $Y\in L(\mathcal{A}+H_{\bm\alpha_{m+1},a_{m+1}})$. Let
\[
J_{  Y}=\big\{i\in[m+1]\mid H_{\bm\alpha_i,a_i}\in\mathcal{A}+H_{\bm\alpha_{m+1},a_{m+1}},\, Y\subseteq H_{\bm\alpha_i,a_i}\big\}.
\]
Obviously $\bigcap_{i\in J_Y}H_{\bm\alpha_i,a_i}=Y$. Given a basis $I_Y$ of $J_Y$ with respect to $\mathcal{A}+H_{\bm\alpha_{m+1},a_{m+1}}$. Let $Y'=\bigcap_{i\in I_Y}H_{\bm\alpha'_i,a'_i}$. Notice from \autoref{NBC-lemma1} that we have $\rank_{ \mathcal{A}+H_{\bm\alpha'_{m+1},a'_{m+1}}}(I_Y)=\# I_Y$, i.e., $I_Y$ is affinely independent with respect to $\mathcal{A}+H_{\bm\alpha'_{m+1},a'_{m+1}}$. So $Y'\in L(\mathcal{A}+H_{\bm\alpha'_{m+1},a'_{m+1}})$, i.e., the construction is well-defined. Likewise, let $J_{Y'}=\big\{i\in[m+1]\mid H_{\bm\alpha'_i,a'_i}\in\mathcal{A}+H_{\bm\alpha'_{m+1},a'_{m+1}},\,Y'\subseteq H_{\bm\alpha'_i,a'_i}\big\}$. Then $\rank_{ \mathcal{A}+H_{\bm\alpha'_{m+1},a'_{m+1}}}(J_{Y'})=\rank_{ \mathcal{A}+H_{\bm\alpha_{m+1},a_{m+1}}}(J_Y)=\#I_Y$. With this construction, we will further present the following lemmas.
\begin{lemma}\label{NBC-lemma2}
Suppose $\mathcal{A}$ is essential. Given $(\bm\alpha_{m+1},a_{m+1}),(\bm\alpha'_{m+1},a'_{m+1})\in\mathbb{F}^{d+1}$.
\begin{itemize}
  \item [\i] Given $Y\in L(\mathcal{A}+H_{\bm\alpha_{m+1},a_{m+1}})$ and $Y'=\bigcap_{i\in I_Y }H_{\bm\alpha'_i,a'_i}$ for some basis $I_Y$ of $J_{  Y}$. If $X_{(\bm\alpha_{m+1},a_{m+1})}\subseteq X_{(\bm\alpha'_{m+1},a'_{m+1})}$ for $X_{(\bm\alpha_{m+1},a_{m+1})}, X_{(\bm\alpha'_{m+1},a'_{m+1})}\in L(\tilde{\mathcal{A}})$, then $J_{Y'}\subseteq J_Y$.
  \item [\ii] Let $Y_1,Y_2\in L(\mathcal{A}+H_{\bm\alpha_{m+1},a_{m+1}})$ with the same rank, and $Y_i'=\bigcap_{j\in I_{  Y_i}}H_{\bm\alpha'_j,a'_j}$ for some basis $I_{Y_i}$ of $J_{Y_i}$ with $i=1,2$, then $Y_1'\ne Y_2'$ whenever $Y_1\ne Y_2$.
\end{itemize}

 \end{lemma}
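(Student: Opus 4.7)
The plan is built around a \emph{dual representation} of every $Z\in L(\mathcal{A})$ as a specific element $Z^{\ast}\in L(\tilde{\mathcal{A}})$, which will let me transfer containment in $H_{\bm\alpha'_{m+1},a'_{m+1}}$ to containment in $H_{\bm\alpha_{m+1},a_{m+1}}$. Since $\mathcal{A}$ is essential, the restriction $\mathcal{A}/Z$ is also essential and hence contains some $\bm v_{i_0}\in L_0(\mathcal{A})$ inside $Z$; similarly $\mathcal{A}^o|_{V_Z}$ is essential in $V_Z$, so its $1$-dimensional flats---which are precisely the $\mathbb{F}\bm u_j\subseteq V_Z$---span $V_Z$. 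Setting
\[
Z^{\ast}:=H_{\tilde{\bm v}_{i_0}}\cap\bigcap_{j:\,\mathbb{F}\bm u_j\subseteq V_Z}H_{\tilde{\bm u}_j}\in L(\tilde{\mathcal{A}}),
\]
a direct unraveling of the defining equations of $\tilde{\mathcal{A}}$ shows $(\bm\beta,b)\in Z^{\ast}\iff Z\subseteq H_{\bm\beta,b}$. By minimality of $X_{(\bm\alpha'_{m+1},a'_{m+1})}$ and the hypothesis $X_{(\bm\alpha_{m+1},a_{m+1})}\subseteq X_{(\bm\alpha'_{m+1},a'_{m+1})}$, this will yield the \emph{key reduction}: for any $Z\in L(\mathcal{A})$, if $Z\subseteq H_{\bm\alpha'_{m+1},a'_{m+1}}$ then $Z\subseteq H_{\bm\alpha_{m+1},a_{m+1}}$.

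For part \i, I will split on whether $m+1\in I_Y$. If $m+1\notin I_Y$, then $Y'=Y\in L(\mathcal{A})$; the inclusion $j\in J_{Y'}\Rightarrow j\in J_Y$ is immediate for $j\in[m]$, and the case $j=m+1$ is exactly the key reduction applied to $Z=Y$. If $m+1\in I_Y$, write $Y=Y_0\cap H_{\bm\alpha_{m+1},a_{m+1}}$ and $Y'=Y_0\cap H_{\bm\alpha'_{m+1},a'_{m+1}}$ with $Y_0\in L(\mathcal{A})$; for $j\in J_{Y'}\cap[m]$, either $Y_0\subseteq H_{\bm\alpha_j,a_j}$ (so $Y\subseteq Y_0\subseteq H_{\bm\alpha_j,a_j}$), or \autoref{NBC-lemma1} forces $\dim Y'=\dim Y_0-1$, so that $Y'=Y_0\cap H_{\bm\alpha_j,a_j}\in L(\mathcal{A})$. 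Applying the key reduction to this $Y'$ then gives $Y'\subseteq H_{\bm\alpha_{m+1},a_{m+1}}$, hence $Y'\subseteq Y_0\cap H_{\bm\alpha_{m+1},a_{m+1}}=Y$, and equality of dimensions forces $Y=Y'\subseteq H_{\bm\alpha_j,a_j}$, i.e.\ $j\in J_Y$.

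For part \ii, I will argue by contradiction, splitting into four cases according to which of $m+1\in I_{Y_1},\,m+1\in I_{Y_2}$ hold. For each $i$, set $\tilde Y_i:=\bigcap_{k\in I_{Y_i}\setminus\{m+1\}}H_{\bm\alpha_k,a_k}\in L(\mathcal{A})$ (or $\tilde Y_i=Y_i$ if $m+1\notin I_{Y_i}$), so that $Y_i,\,Y_i'$ are obtained from $\tilde Y_i$ by intersecting with $H_{\bm\alpha_{m+1},a_{m+1}}$ or $H_{\bm\alpha'_{m+1},a'_{m+1}}$, respectively, whenever $m+1\in I_{Y_i}$. The same-rank assumption together with \autoref{NBC-lemma1} fixes all relevant dimensions. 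Assuming $Y_1'=Y_2'$, a brief dimension chase produces a flat $Z\in L(\mathcal{A})$---namely $Y_1$, $Y_2$, or $\tilde Y_1\cap\tilde Y_2$ depending on the case---satisfying $Z=Y_1'=Y_2'$, and in particular $Z\subseteq H_{\bm\alpha'_{m+1},a'_{m+1}}$. The key reduction then gives $Z\subseteq H_{\bm\alpha_{m+1},a_{m+1}}$, so $Z\subseteq\tilde Y_i\cap H_{\bm\alpha_{m+1},a_{m+1}}=Y_i$ for both $i$; a final dimension match forces $Z=Y_1=Y_2$, contradicting $Y_1\ne Y_2$.

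The hardest step will be setting up the dual representation $Z^{\ast}$, which rests on two essentiality consequences: every $Z\in L(\mathcal{A})$ must contain a $0$-dimensional flat of $\mathcal{A}$, and the $1$-dimensional flats of $\mathcal{A}^o$ contained in $V_Z$ must span $V_Z$. Both follow from the general fact that restrictions of essential arrangements remain essential, combined with the standard property that essential linear arrangements are generated at the rank-one level by their $1$-dimensional flats---the same property that makes the Bixby--Coullard adjoint construction well-behaved. Once this dictionary is in place, the remaining verifications in both parts reduce to careful dimension counting plus a single application of the key reduction in each case.
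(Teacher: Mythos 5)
Your plan is correct, but it takes a genuinely different route from the paper's. The paper proves part (i) by contradiction: for $k\in J_{Y'}\setminus J_Y$ it produces a set ($I_Y\sqcup\{m+1\}$ or $I_Y\sqcup\{k\}$, according to whether $m+1\in I_Y$) whose rank is $\#I_Y+1$ with respect to $\mathcal{A}+H_{\bm\alpha_{m+1},a_{m+1}}$ but $\#I_Y$ with respect to $\mathcal{A}+H_{\bm\alpha'_{m+1},a'_{m+1}}$, contradicting \autoref{NBC-lemma1}; the nonemptiness needed for the first rank comes from \autoref{lemma0} together with the fact that every flat of an essential arrangement meets $L_0(\mathcal{A})$, and part (ii) is then a three-line corollary of (i) via $I_{Y_2}\subseteq J_{Y_2'}=J_{Y_1'}\subseteq J_{Y_1}$. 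You instead upgrade \autoref{lemma0} from points of $L_0(\mathcal{A})$ to arbitrary flats: your dual flat $Z^{\ast}=H_{\tilde{\bm v}_{i_0}}\cap\bigcap_{j:\,\mathbb{F}\bm u_j\subseteq V_Z}H_{\tilde{\bm u}_j}\in L(\tilde{\mathcal{A}})$ does satisfy $(\bm\beta,b)\in Z^{\ast}$ iff $Z\subseteq H_{\bm\beta,b}$, and the same minimality argument used in \autoref{lemma0} then yields your key reduction, after which both parts become direct dimension counts (you still invoke \autoref{NBC-lemma1} once, for $\dim Y'=d-\#I_Y$, which is the well-definedness the paper records just before the lemma). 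The two facts you defer are true: every flat of an essential affine arrangement contains a point of $L_0(\mathcal{A})$ (the paper relies on this implicitly as well), and the $1$-dimensional flats of $\mathcal{A}^o$ contained in $V_Z$ span $V_Z$ (restrictions of essential central arrangements are essential, and the lines of an essential central arrangement span); the latter is the extra overhead your route incurs. What you gain is a transparent, reusable statement of exactly what the hypothesis $X_{(\bm\alpha_{m+1},a_{m+1})}\subseteq X_{(\bm\alpha'_{m+1},a'_{m+1})}$ encodes; what you lose is brevity in (ii), where your four-case analysis (which must also first dispose of the subcase $\tilde Y_1=\tilde Y_2$ when $m+1\in I_{Y_1}\cap I_{Y_2}$, giving $Y_1=Y_2$ at once) replaces the paper's immediate deduction from (i) --- citing your own part (i) there would shorten it.
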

\begin{proof}
Suppose $k\in J_{  Y'}\setminus J_{  Y}$, i.e., $Y'\subseteq H_{\bm\alpha'_k,a'_k}$ and $Y\nsubseteq H_{\bm\alpha_k,a_k}$. If $m+1\notin I_{  Y}$, then $Y=Y'$ and $k=m+1$ by $Y\nsubseteq H_{\bm\alpha_k,a_k}$. It follows from \autoref{lemma0} that $Y\cap H_{\bm\alpha_{m+1},a_{m+1}}\cap L_0(\mathcal{A})=Y'\cap L_0(\mathcal{A})\ne\emptyset$ since $\mathcal{A}$ is essential. This implies that $I_{  Y}\sqcup\{m+1\}$ is affinely independent with respect to $\mathcal{A}+H_{\bm\alpha_{m+1},a_{m+1}}$. So we have
\begin{equation}\label{rank-inequality}
\rank_{ \mathcal{A}+H_{\bm\alpha_{m+1},a_{m+1}}}(I_{  Y}\sqcup\{m+1\})=\#I_{  Y}+1>\#I_{  Y}=\rank_{ \mathcal{A}+H_{\bm\alpha'_{m+1},a'_{m+1}}}(I_{  Y}\sqcup\{m+1\}).
\end{equation}
From \autoref{NBC-lemma1}, this is impossible. If $m+1\in I_{  Y}$, let $Z=\bigcap_{i\in I_Y\setminus\{m+1\}}H_{\bm\alpha_i,a_i}$. Then $k\ne m+1$. We deduce that $Y=Z\cap H_{\bm\alpha_{m+1},a_{m+1}}$ and $Y'=Z\cap H_{\bm\alpha'_{m+1},a'_{m+1}}$ and $Y,Y'\subsetneqq Z$ in this case. Note from $Y'\subseteq H_{\bm\alpha_k,a_k},\;Y\nsubseteq H_{\bm\alpha_k,a_k}$ that $Z\nsubseteq H_{\bm\alpha_k,a_k}$ and $Z\cap H_{\bm\alpha'_{m+1},a'_{m+1}}=Z\cap H_{\bm\alpha_k,a_k}=Y'\in L(\mathcal{A})$. Immediately,
we obtain from \autoref{lemma0} that $Y\cap L_0(\mathcal{A})=Y'\cap L_0(\mathcal{A})\ne\emptyset$ since $\mathcal{A}$ is essential.
Analogous to the inequality \eqref{rank-inequality}, this yields
\[
\rank_{ \mathcal{A}+H_{\bm\alpha_{m+1},a_{m+1}}}(I_{  Y}\sqcup\{k\})=\#I_{  Y}+1>\#I_{  Y}=\rank_{ \mathcal{A}+H_{\bm\alpha'_{m+1},a'_{m+1}}}(I_{  Y}\sqcup\{k\}).
\]
Likewise, it is impossible by \autoref{NBC-lemma1}. Hence, we complete the proof of the first part.

For the second part, suppose $Y_1'=Y_2'$, then we have $J_{  Y_1'}=J_{  Y_2'}$. From the first part, we arrive at $I_{  Y_2}\subseteq J_{  Y_2'}\subseteq J_{  Y_1}$. This can give rise to $Y_2=\bigcap_{i\in I_{Y_2}}H_{\bm\alpha_i,a_i}=Y_1$ since $Y_1,Y_2$ have the same rank, contradicting $Y_1\ne Y_2$.
\end{proof}
\begin{lemma}\label{NBC-lemma3}
Suppose $\mathcal{A}$ is essential. Given $(\bm\alpha_{m+1},a_{m+1}),(\bm\alpha'_{m+1},a'_{m+1})\in\mathbb{F}^{d+1}$, $Y\in L(\mathcal{A}+H_{\bm\alpha_{m+1},a_{m+1}})$ and $Y'=\bigcap_{i\in I_Y}H_{\bm\alpha'_i,a'_i}$ for some basis $I_Y$ of $J_Y$.  If $X_{(\bm\alpha_{m+1},a_{m+1})}\subseteq X_{(\bm\alpha'_{m+1},a'_{m+1})}$ for $X_{(\bm\alpha_{m+1},a_{m+1})}, X_{(\bm\alpha'_{m+1},a'_{m+1})}\in L(\tilde{\mathcal{A}})$ and $J\subseteq[m+1]\setminus J_Y$, then
\[
\rank_{ \mathcal{A}+H_{\bm\alpha_{m+1},a_{m+1}}/Y}(J)\le \rank_{ \mathcal{A}+H_{\bm\alpha'_{m+1},a'_{m+1}}/Y'}(J).
\]
\end{lemma}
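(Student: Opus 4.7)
The plan is to reduce this claim to \autoref{NBC-lemma1} by expressing affine rank in the restriction in terms of affine rank in the ambient arrangement. Write $\mathcal{B}:=\mathcal{A}+H_{\bm\alpha_{m+1},a_{m+1}}$ and $\mathcal{B}':=\mathcal{A}+H_{\bm\alpha'_{m+1},a'_{m+1}}$. Since $I_Y$ is a basis of $J_Y$ with respect to $\mathcal{B}$, we have $Y=\bigcap_{i\in I_Y}H_{\bm\alpha_i,a_i}$ and $\dim(Y)=d-\#I_Y$. Moreover, applying \autoref{NBC-lemma1} to subsets of $I_Y$ already guarantees that $I_Y$ is affinely independent in $\mathcal{B}'$ as well, so $Y'$ is a genuine flat of $L(\mathcal{B}')$ with $\dim(Y')=\dim(Y)$ and the restriction $\mathcal{B}'/Y'$ is well-defined.

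The key translation is the following. For any $K\subseteq[m+1]\setminus J_Y$, the identity $\bigcap_{i\in K}(H_{\bm\alpha_i,a_i}\cap Y)=\bigcap_{i\in K\sqcup I_Y}H_{\bm\alpha_i,a_i}$ shows that $K$ is affinely independent in $\mathcal{B}/Y$ if and only if $K\sqcup I_Y$ is affinely independent in $\mathcal{B}$; the analogous equivalence holds between $\mathcal{B}'/Y'$ and $\mathcal{B}'$.

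Now choose an affinely independent subset $K\subseteq J$ in $\mathcal{B}/Y$ of maximum cardinality, so that $\#K=\rank_{\mathcal{B}/Y}(J)$. It suffices to show that $K$ is also affinely independent in $\mathcal{B}'/Y'$. By the translation, $K\sqcup I_Y$ is affinely independent in $\mathcal{B}$, i.e., $\rank_{\mathcal{B}}(K\sqcup I_Y)=\#(K\sqcup I_Y)$. If $m+1\notin K\sqcup I_Y$, then $K\sqcup I_Y\subseteq[m]$ and affine independence only depends on $\mathcal{A}$, so $K\sqcup I_Y$ remains affinely independent in $\mathcal{B}'$. If instead $m+1\in K\sqcup I_Y$, then applying \autoref{NBC-lemma1} with $J'=(K\sqcup I_Y)\setminus\{m+1\}\subseteq[m]$ yields $\rank_{\mathcal{B}'}(K\sqcup I_Y)\ge\rank_{\mathcal{B}}(K\sqcup I_Y)=\#(K\sqcup I_Y)$, forcing equality.

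The subtle point I expect to be the main obstacle is the last step of this case analysis: the equality $\rank_{\mathcal{B}'}(K\sqcup I_Y)=\#(K\sqcup I_Y)$ only asserts that \emph{some} subset of $K\sqcup I_Y$ of maximum cardinality is affinely independent with respect to $\mathcal{B}'$, but any such subset must be all of $K\sqcup I_Y$ itself, and hence $K\sqcup I_Y$ is affinely independent in $\mathcal{B}'$. Combining with the translation equivalence for $\mathcal{B}'$, we conclude that $K$ is affinely independent in $\mathcal{B}'/Y'$, proving $\rank_{\mathcal{B}'/Y'}(J)\ge\#K=\rank_{\mathcal{B}/Y}(J)$.
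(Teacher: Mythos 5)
Your proposal is correct and takes essentially the same route as the paper: both arguments reduce the rank in the restriction to ambient ranks through the basis $I_Y$ (your ``translation'' is exactly the paper's identity $\rank_{\mathcal{A}+H_{\bm\alpha_{m+1},a_{m+1}}/Y}(K)=\rank_{\mathcal{A}+H_{\bm\alpha_{m+1},a_{m+1}}}(K\cup I_Y)-\#I_Y$ and its primed analogue) and then apply \autoref{NBC-lemma1} to $K\sqcup I_Y$ to transfer affine independence to $\mathcal{A}+H_{\bm\alpha'_{m+1},a'_{m+1}}$, including the same observation that full rank forces the whole set to be affinely independent. The only cosmetic difference is that the paper also cites \autoref{NBC-lemma2} to note $J\subseteq[m+1]\setminus J_{Y'}$, a point your argument recovers implicitly because affine independence of $K\sqcup I_Y$ with respect to $\mathcal{A}+H_{\bm\alpha'_{m+1},a'_{m+1}}$ forces $K\cap J_{Y'}=\emptyset$.
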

\begin{proof}
Note that for any $J\subseteq[m+1]\setminus J_{  Y}$, $\rank_{ \mathcal{A}+H_{\bm\alpha_{m+1},a_{m+1}}/Y}(J)$ equals the maximal cardinality of the affinely independent subsets $I$ of $J$ with respect to $\mathcal{A}+H_{\bm\alpha_{m+1},a_{m+1}}/Y$,
where $I$ is affinely independent with respect to $\mathcal{A}+H_{\bm\alpha_{m+1},a_{m+1}}/Y$ if and only if $\bigcap_{i\in I}H_{\bm\alpha_i,a_i}\bigcap Y\ne\emptyset$ and $\dim(Y)-\dim\big(\bigcap_{i\in I}H_{\bm\alpha_i,a_i}\bigcap Y\big)=\#I$.

Given $J\subseteq [m+1]\setminus J_{  Y}$, we have $J\subseteq[m+1]\setminus J_{  Y}\subseteq[m+1]\setminus J_{  Y'}$ from \autoref{NBC-lemma2}. Let $I$ be a maximal affinely independent subset of $J$ with respect to $\mathcal{A}+H_{\bm\alpha_{m+1},a_{m+1}}/Y$ satisfying $\rank_{ \mathcal{A}+H_{\bm\alpha_{m+1},a_{m+1}}/Y}(J)=\#I$. To obtain the result, it is enough to show that $\# I= \rank_{ \mathcal{A}+H_{\bm\alpha'_{m+1},a'_{m+1}}/Y'}(I)$ since $\rank_{ \mathcal{A}+H_{\bm\alpha'_{m+1},a'_{m+1}}/Y'}(I)\le\rank_{ \mathcal{A}+H_{\bm\alpha'_{m+1},a'_{m+1}}/Y'}(J)$. Notice that
\begin{eqnarray}\label{NBC-rank}
\#I&=&\rank_{ \mathcal{A}+H_{\bm\alpha_{m+1},a_{m+1}}/Y}(J)={\rm dim}(Y)-{\rm dim}\big(\bigcap_{i\in I\cup I_{  Y}}H_{\bm\alpha_i,a_i}\big)\nonumber\\
&=&d-\rank_{ \mathcal{A}+H_{\bm\alpha_{m+1},a_{m+1}}}(J_{  Y})-\big(d-\rank_{ \mathcal{A}+H_{\bm\alpha_{m+1},a_{m+1}}}(I\cup I_{  Y})\big)\nonumber\\
&=&\rank_{ \mathcal{A}+H_{\bm\alpha_{m+1},a_{m+1}}}(I\cup I_{  Y})-\rank_{ \mathcal{A}+H_{\bm\alpha_{m+1},a_{m+1}}}(J_{  Y}).
\end{eqnarray}
It follows from $\rank_{ \mathcal{A}+H_{\bm\alpha_{m+1},a_{m+1}}}(J_{  Y})=\#I_{  Y}$ that $\rank_{ \mathcal{A}+H_{\bm\alpha_{m+1},a_{m+1}}}(I\cup I_{  Y})=\#I+\#I_{  Y}$. Immediately, we have $\rank_{ \mathcal{A}+H_{\bm\alpha_{m+1},a_{m+1}}}(I\cup I_{  Y})\le\rank_{ \mathcal{A}+H_{\bm\alpha'_{m+1},a'_{m+1}}}(I\cup I_{  Y})\le\#I+\#I_{  Y}$ from $\autoref{NBC-lemma1}$. This yields $\rank_{ \mathcal{A}+H_{\bm\alpha'_{m+1},a'_{m+1}}}(I\cup I_{  Y})=\#I+\#I_{  Y}$, i.e., $I\cup I_{  Y}$ is affinely independent with respect to $\mathcal{A}+H_{\bm\alpha'_{m+1},a'_{m+1}}$. In analogy with the argument of equation \eqref{NBC-rank}, we arrive at
\[
\rank_{ \mathcal{A}+H_{\bm\alpha'_{m+1},a'_{m+1}}/Y'}(I)=\rank_{ \mathcal{A}+H_{\bm\alpha'_{m+1},a'_{m+1}}}(I\cup I_{  Y})-\rank_{ \mathcal{A}+H_{\bm\alpha'_{m+1},a'_{m+1}}}(J_{  Y'}).
\]
Since $\rank_{ \mathcal{A}+H_{\bm\alpha'_{m+1},a'_{m+1}}}(J_{  Y'})=\# I_{  Y}$, we get $\rank_{ \mathcal{A}+H_{\bm\alpha'_{m+1},a'_{m+1}}/Y'}(I)=\#I$.
\end{proof}
The following result is a key property to verifying \autoref{comparison-1}.
\begin{lemma}\label{NBC-lemma4}
Suppose $\mathcal{A}$ is essential. Given $(\bm\alpha_{m+1},a_{m+1}),(\bm\alpha'_{m+1},a'_{m+1})\in\mathbb{F}^{d+1}$, $Y\in L(\mathcal{A}+H_{\bm\alpha_{m+1},a_{m+1}})$ and $Y'=\bigcap_{i\in I_Y}H_{\bm\alpha'_i,a'_i}$ for some basis $I_Y$ of $J_Y$.  If $X_{(\bm\alpha_{m+1},a_{m+1})}\subseteq X_{(\bm\alpha'_{m+1},a'_{m+1})}$ for $X_{(\bm\alpha_{m+1},a_{m+1})}, X_{(\bm\alpha'_{m+1},a'_{m+1})}\in L(\tilde{\mathcal{A}})$, then
\[
\NBC_j\big(\mathcal{A}+H_{\bm\alpha_{m+1},a_{m+1}}/Y\big)\subseteq\NBC_j\big(\mathcal{A}+H_{\bm\alpha'_{m+1},a'_{m+1}}/Y'\big).
\]
\end{lemma}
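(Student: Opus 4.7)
I will take an arbitrary $I\in\NBC_j(\mathcal{A}+H_{\bm\alpha_{m+1},a_{m+1}}/Y)$ and establish, in turn, that $(i)$ the index set $I$ lies inside $[m+1]\setminus J_{Y'}$ (so $I$ indexes hyperplanes of the target restriction), $(ii)$ $I$ remains affinely independent with respect to $\mathcal{A}+H_{\bm\alpha'_{m+1},a'_{m+1}}/Y'$, and $(iii)$ $I$ contains no affine broken circuit with respect to $\mathcal{A}+H_{\bm\alpha'_{m+1},a'_{m+1}}/Y'$.

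The first two verifications are immediate from the preparatory lemmas. By \autoref{NBC-lemma2}$(i)$ we have $J_{Y'}\subseteq J_Y$, so $I\subseteq[m+1]\setminus J_Y\subseteq[m+1]\setminus J_{Y'}$, giving $(i)$. For $(ii)$, the NBC property of $I$ on $Y$ yields $\rank_{\mathcal{A}+H_{\bm\alpha_{m+1},a_{m+1}}/Y}(I)=|I|=j$, and \autoref{NBC-lemma3} applied to $J=I\subseteq[m+1]\setminus J_Y$ gives $j=\rank_{\mathcal{A}+H_{\bm\alpha_{m+1},a_{m+1}}/Y}(I)\le\rank_{\mathcal{A}+H_{\bm\alpha'_{m+1},a'_{m+1}}/Y'}(I)\le|I|=j$, forcing equality throughout.

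Step $(iii)$ is the technical heart of the argument. Arguing by contrapositive, suppose $C'$ is an affine circuit in $\mathcal{A}+H_{\bm\alpha'_{m+1},a'_{m+1}}/Y'$ with $C'\setminus\{k_0\}\subseteq I$, where $k_0=\min C'$. Writing $D=C'\setminus\{k_0\}\subseteq I$, the goal is to exhibit an affine circuit $C$ of $\mathcal{A}+H_{\bm\alpha_{m+1},a_{m+1}}/Y$ with $\min C\le k_0$ and $C\setminus\{\min C\}\subseteq I$, contradicting the NBC property of $I$ on $Y$. In the favourable case $k_0\notin J_Y$, \autoref{NBC-lemma3} applied to $D\cup\{k_0\}\subseteq[m+1]\setminus J_Y$ combined with independence of $D$ gives $\rank_Y(D\cup\{k_0\})=|D|$, and, provided $\bigcap_{i\in D\cup\{k_0\}}H_{\bm\alpha_i,a_i}\cap Y$ is non-empty, extracting the unique circuit inside $D\cup\{k_0\}$ containing $k_0$ produces the desired $C$ with $\min C=k_0$.

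The main obstacle lies in two bad cases. First, the intersection $\bigcap_{i\in D\cup\{k_0\}}H_{\bm\alpha_i,a_i}\cap Y$ could a priori be empty, in which case $D\cup\{k_0\}$ is only rank-deficient rather than affinely dependent in the author's sense. Second, $k_0$ might lie in $J_Y\setminus J_{Y'}$, so $k_0$ does not even label a hyperplane of the restriction on $Y$. Both are settled by invoking the hypothesis $X_{(\bm\alpha_{m+1},a_{m+1})}\subseteq X_{(\bm\alpha'_{m+1},a'_{m+1})}$ through \autoref{lemma0}: essentiality of $\mathcal{A}$ provides a vertex of $\mathcal{A}$ inside the $Y'$-side intersection $\bigcap_{i\in D}H_{\bm\alpha'_i,a'_i}\cap Y'\cap H_{\bm\alpha'_{k_0},a'_{k_0}}$, and \autoref{lemma0} pulls this vertex back to the analogous intersection on the $Y$ side, resolving non-emptiness. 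In the residual case $k_0\in J_Y\setminus J_{Y'}$ (which forces $m+1\in I_Y$ and $Y\ne Y'$), a direct computation with $Z=\bigcap_{i\in I_Y\setminus\{m+1\}}H_{\bm\alpha_i,a_i}$ shows $Z\cap H_{\bm\alpha_{k_0},a_{k_0}}=Y$, so replacing the basis $I_Y$ of $J_Y$ by $(I_Y\setminus\{m+1\})\cup\{k_0\}$ reduces the configuration to one in which the analogue of $Y'$ coincides with $Y$, and the broken-circuit contradiction descends immediately.
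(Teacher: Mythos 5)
Your skeleton (transfer affine independence via \autoref{NBC-lemma3}, then pull a primed-side broken circuit back to an unprimed one using \autoref{lemma0} and essentiality) is the same strategy as the paper's, but both of your ``bad cases'' are exactly where the real work lies, and your treatment of each has a genuine gap. First, non-emptiness: you assert that essentiality of $\mathcal{A}$ provides a vertex of $\mathcal{A}$ inside $\bigcap_{i\in D}H'_{\bm\alpha'_i,a'_i}\cap Y'\cap H_{\bm\alpha'_{k_0},a'_{k_0}}$. Essentiality only guarantees a point of $L_0(\mathcal{A})$ inside a nonempty flat of $\mathcal{A}$ itself; the set you exhibit is a flat of $\mathcal{A}+H_{\bm\alpha'_{m+1},a'_{m+1}}$, and in the only nontrivial situation (when $m+1\in C'\cup I_Y$, otherwise primed and unprimed intersections coincide and there is nothing to prove) it is not visibly a flat of $\mathcal{A}$: it could, for instance, be a single point obtained by cutting a line of $L(\mathcal{A})$ with $H_{\bm\alpha'_{m+1},a'_{m+1}}$, and such a point need not belong to $L_0(\mathcal{A})$, so \autoref{lemma0} has nothing to act on. The paper's proof first uses the circuit-exchange property of $C'$ to rewrite the relevant intersection as one indexed inside $[m]$ (hence an honest member of $L(\mathcal{A})$), and only then invokes essentiality and \autoref{lemma0}; the subcase $m+1\in I_Y$ needs a further separate argument. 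That rewriting is the crux of the proof and is missing from your sketch.

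Second, the case $k_0\in J_Y\setminus J_{Y'}$. Your claim that it forces $m+1\in I_Y$ and $Y\ne Y'$ is not correct: if $m+1\notin I_Y$ then $Y'=Y$, and the case can still occur with $k_0=m+1$ (that is, $Y\subseteq H_{\bm\alpha_{m+1},a_{m+1}}$ while $Y\nsubseteq H_{\bm\alpha'_{m+1},a'_{m+1}}$), which your reduction does not cover; moreover, replacing the basis $I_Y$ by $(I_Y\setminus\{m+1\})\cup\{k_0\}$ changes $Y'$, so a contradiction for the new configuration does not ``descend immediately'' to the originally given one. The paper disposes of this case without any computation by \emph{choosing} the total order on $[m+1]$ so that the elements of $J_Y\setminus J_{Y'}$ are the largest: the minimum of any circuit of the primed restriction (which has at least two elements, the others lying in your $I\subseteq[m+1]\setminus J_Y$) then automatically avoids $J_Y\setminus J_{Y'}$. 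Since the NBC counts used downstream are order-independent, nothing is lost; your version, which tries to prove the containment for an arbitrary order, is a strictly stronger claim and is not established by your argument.
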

\begin{proof}
Given a total order $\prec$ on $[m+1]$ such that for any $k\in[m+1]\setminus(J_{  Y}\setminus J_{  Y'})$ and $k'\in J_{  Y}\setminus J_{  Y'}$, $k\prec k'$. Arguing by contradiction, suppose $J\in \NBC_j\big(\mathcal{A}+H_{\bm\alpha_{m+1},a_{m+1}}/Y\big)\setminus \NBC_j\big(\mathcal{A}+H_{\bm\alpha'_{m+1},a'_{m+1}}/Y'\big)$. Then $J$ is affinely independent with respect to $\mathcal{A}+H_{\bm\alpha_{m+1},a_{m+1}}/Y$ and $\rank_{ \mathcal{A}+H_{\bm\alpha_{m+1},a_{m+1}}/Y}(J)=\#J$. It follows from \autoref{NBC-lemma3} that $\rank_{ \mathcal{A}+H_{\bm\alpha_{m+1},a_{m+1}}/Y}(J)\le\rank_{ \mathcal{A}+H_{\bm\alpha'_{m+1},a'_{m+1}}/Y'}(J)\le\# J$. Immediately, we have $\rank_{ \mathcal{A}+H_{\bm\alpha'_{m+1},a'_{m+1}}/Y'}(J)=\# J$, i.e., $J$ is also affinely independent with respect to $\mathcal{A}+H_{\bm\alpha'_{m+1},a'_{m+1}}/Y'$. Notice from $J\notin\NBC_j\big(\mathcal{A}+H_{\bm\alpha'_{m+1},a'_{m+1}}/Y'\big)$ that there exists $k\in[m+1]\setminus J_{Y}$ such that $J\cup\{k\}$ contains an affine circuit $I$ (with respect to $\mathcal{A}+H_{\bm\alpha'_{m+1},a'_{m+1}}/Y'$) satisfying $k\in I$ and $k\prec k'$ for all $k'\in I\setminus\{k\}$. This means that $k\in[m]$, $\rank_{ \mathcal{A}+H_{\bm\alpha'_{m+1},a'_{m+1}}/Y'}(I)=\#I-1$ and $\bigcap_{i\in I\cup I_{  Y}}H_{\bm\alpha'_i,a'_i}\ne\emptyset$. We assert $\bigcap_{i\in I\cup I_{  Y}}H_{\bm\alpha_i,a_i}\ne\emptyset$. Suppose $\bigcap_{i\in I\cup I_{  Y}}H_{\bm\alpha_i,a_i}=\emptyset$, then we have $m+1\in I\cup I_{  Y}$. Moreover, $m+1\in I_{  Y}$. Otherwise, $m+1\in I\setminus\{k\}$. This implies that $\bigcap_{i\in I\cup I_{  Y}\setminus\{k\}}H_{\bm\alpha'_i,a'_i}=\bigcap_{i\in I\cup I_{  Y}\setminus\{m+1\}}H_{\bm\alpha'_i,a'_i}\in L(\mathcal{A})$. It follows from \autoref{lemma0} that $\bigcap_{i\in I\cup I_Y}H_{\bm\alpha_i,a_i}\bigcap L_0(\mathcal{A})=\bigcap_{i\in I\cup I_{  Y}\setminus\{m+1\}}H_{\bm\alpha'_i,a'_i}\bigcap L_0(\mathcal{A})\ne\emptyset$ since $\mathcal{A}$ is essential, contradicting $\bigcap_{i\in I\cup I_{  Y}}H_{\bm\alpha_i,a_i}=\emptyset$.
So $m+1\in I_{  Y}$. Note from the fact $H_{\bm\alpha_i',a_i'}=H_{\bm\alpha_i,a_i}$ for $i\in[m]$ that we have $\bigcap_{i\in I\cup I_Y\setminus\{m+1\}}H_{\bm\alpha_i,a_i}=\bigcap_{i\in I\cup I_Y\setminus\{k,m+1\}}H_{\bm\alpha_i,a_i}$ since $I$ is an affine circuit. It gives rise to $\bigcap_{i\in I\cup I_{  Y}}H_{\bm\alpha_i,a_i}\ne\emptyset$, which is in contradiction with $\bigcap_{i\in I\cup I_{  Y}}H_{\bm\alpha_i,a_i}=\emptyset$. So we have shown the assertion. This assertion says $\bigcap_{i\in I}H_{\bm\alpha_i,a_i}\bigcap Y\ne\emptyset$. Together with \autoref{NBC-lemma3}, we have $\rank_{ \mathcal{A}+H_{\bm\alpha_{m+1},a_{m+1}}/Y}(I)\le\rank_{ \mathcal{A}+H_{\bm\alpha'_{m+1},a'_{m+1}}/Y}(I)=\#I-1$, which means that $I$ is affinely dependent with respect to $\mathcal{A}+H_{\bm\alpha_{m+1},a_{m+1}}/Y$. Hence, there is an affine broken circuit $I'\setminus\{k\}\subseteq I\setminus\{k\}\subseteq J$ with respect to $\mathcal{A}+H_{\bm\alpha_{m+1},a_{m+1}}/Y$ satisfying $k\in I'$, contradicting $J\in\NBC_j\big(\mathcal{A}+H_{\bm\alpha_{m+1},a_{m+1}}/Y\big)$. It completes the proof.
\end{proof}
With these above results we will give a proof of \autoref{comparison-1}.
\begin{proof}[Proof of \autoref{comparison-1}]
Let $(\bm\alpha_{m+1},a_{m+1})=(\bm\alpha,a)$ and $(\bm\alpha'_{m+1},a'_{m+1})=(\bm\alpha',a')$. \autoref{NBC-lemma2} implies that there is a rank-preserving injection
\[
\phi:L(\mathcal{A}+H_{\bm\alpha,a})\to L(\mathcal{A}+H_{\bm\alpha',a'}),\quad Y\mapsto Y',
\]
where $Y'=\bigcap_{j\in I_Y}H_{\bm\alpha_i',a_i'}\in L(\mathcal{A}+H_{\bm\alpha',a'})$ for some basis $I_{  Y}$ of $Y$. Recall from the definition of the Whitney number of the second kind that the injectivity of $\phi$ implies directly $ W_i(\mathcal{A}+H_{\bm\alpha,a})\le W_i(\mathcal{A}+H_{\bm\alpha',a'})$. Moreover, together with \autoref{NBC-lemma4} and equation \eqref{NBC-Theorem}, we arrive at $c_{  ij}(\mathcal{A}+H_{\bm\alpha,a})\le c_{  ij}(\mathcal{A}+H_{\bm\alpha',a'})$. From $\chi(\mathcal{A}+H_{\bm\alpha,a},t)= w(\mathcal{A}+H_{\bm\alpha,a};0,t)$, we obtain $ w_i^+(\mathcal{A}+H_{\bm\alpha,a})=c_{  di}(\mathcal{A}+H_{\bm\alpha,a})$. Likewise, we have $ w_i^+(\mathcal{A}+H_{\bm\alpha',a'})=c_{  di}(\mathcal{A}+H_{\bm\alpha',a'})$. So we get $ w_i^+(\mathcal{A}+H_{\bm\alpha,a})\le w_i^+(\mathcal{A}+H_{\bm\alpha',a'})$ since $c_{  di}(\mathcal{A}+H_{\bm\alpha,a})\le c_{  di}(\mathcal{A}+H_{\bm\alpha',a'})$. Additionally, combining with the identity on  $f_i(\mathcal{A}+H_{\bm\alpha,a})$ in the end of Subsection 2.1 and equation \eqref{Whitney-Polynomial-I}, we obtain
\[
\sum_{i=0}^{d}f_i(\mathcal{A}+H_{\bm\alpha,a})s^{d-i}=(-1)^d w(\mathcal{A}+H_{\bm\alpha,a};-s,-1).
\]
Notice from \eqref{Whitney-Polynomial} that $ w(\mathcal{A}+H_{\bm\alpha,a};-s,-1)=\Sum_{0\le j\le i\le d}(-1)^dc_{  ij}(\mathcal{A}+H_{\bm\alpha,a})s^{d-i}$. This yields $f_i(\mathcal{A}+H_{\bm\alpha,a})=\sum_{j=0}^{i}c_{  ij}(\mathcal{A}+H_{\bm\alpha,a})$. Likewise, we have $f_i(\mathcal{A}+H_{\bm\alpha',a'})=\sum_{j=0}^{i}c_{  ij}(\mathcal{A}+H_{\bm\alpha',a'})$. Hence, we get $f_i(\mathcal{A}+H_{\bm\alpha,a})\le f_i(\mathcal{A}+H_{\bm\alpha',a'})$ via $c_{  ij}(\mathcal{A}+H_{\bm\alpha,a})\le c_{  ij}(\mathcal{A}+H_{\bm\alpha',a'})$. Immediately, we have $r(\mathcal{A}+H_{\bm\alpha,a})=f_d(\mathcal{A}+H_{\bm\alpha,a})\le f_d(\mathcal{A}+H_{\bm\alpha',a'})=r(\mathcal{A}+H_{\bm\alpha',a'})$ as well.
\end{proof}
\paragraph{4.2. More uniform comparisons.} In this subsection, we will explore more order-preserving relations of combinatorial invariants associated with several special types of one-element extensions. Fu-Wang \cite[Theorem 3.3]{Fu-Wang2021} have shown the following result, which may be viewed as a special case of \autoref{comparison-1}.
\begin{corollary}[\cite{Fu-Wang2021}]\label{comparison-2}
Let $\mathcal{A}=\big\{H_{\bm\alpha_i}\mid i\in[m]\big\}$ be an essential linear arrangement in $\mathbb{F}^{d}$. Given vectors $\bm\alpha,\bm\alpha'\in\mathbb{F}^d$. If $X_{\bm\alpha}\subseteq X_{\bm\alpha'}$ for $X_{\bm\alpha}, X_{\bm\alpha'}\in L(\sigma\mathcal{A})$, then
\[
{\rm INV}(\mathcal{A}+ H_{\bm \alpha})\le{\rm INV}(\mathcal{A}+H_{\bm\alpha'}).
\]
\end{corollary}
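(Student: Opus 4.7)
The plan is to derive \autoref{comparison-2} as a direct specialization of \autoref{comparison-1} by identifying each vector $\bm\alpha\in\mathbb{F}^d$ with the point $(\bm\alpha,0)\in\mathbb{F}^{d+1}$ and transferring the containment hypothesis from $L(\sigma\mathcal{A})$ to $L(\tilde{\mathcal{A}})$. The work already done in proving \autoref{comparison-1} contains all the real content; this corollary is just bookkeeping about how $\sigma\mathcal{A}$ sits inside $\tilde{\mathcal{A}}$ in the linear case.

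First I would observe that because $\mathcal{A}$ is an essential linear arrangement, $\mathcal{A}^o=\mathcal{A}$ and $L_0(\mathcal{A})=\{\bm 0\}$, so by \eqref{extension-adjoint-arrangement} the induced adjoint arrangement decomposes as $\tilde{\mathcal{A}}=\{H_{\tilde{\bm 0}}\}\cup\tilde{\mathcal{A}}_1$ with $H_{\tilde{\bm 0}}:x_{d+1}=0$ and $\tilde{\mathcal{A}}_1=\{H_{\tilde{\bm u}_i}\mid i\in[n_1]\}$. Since $H_{\bm\alpha}=H_{\bm\alpha,0}$, studying $\mathcal{A}+H_{\bm\alpha}$ is the same as studying $\mathcal{A}+H_{\bm\alpha,0}$ through the vector $(\bm\alpha,0)\in\mathbb{F}^{d+1}$, which is the bridge back to \autoref{comparison-1}.

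Next I would pin down the inclusion-minimal member $X_{(\bm\alpha,0)}\in L(\tilde{\mathcal{A}})$ containing $(\bm\alpha,0)$. Since $(\bm\alpha,0)\in H_{\tilde{\bm 0}}$ automatically, and $(\bm\alpha,0)\in H_{\tilde{\bm u}_i}$ if and only if $\bm u_i\cdot\bm\alpha=0$, i.e.\ $\bm\alpha\in H_{\bm u_i}$, intersecting $H_{\tilde{\bm 0}}$ with all $H_{\tilde{\bm u}_i}$ containing $(\bm\alpha,0)$ yields
\[
X_{(\bm\alpha,0)}=\Bigl(\bigcap_{i:\,\bm\alpha\in H_{\bm u_i}}H_{\bm u_i}\Bigr)\times\{0\}=X_{\bm\alpha}\times\{0\},
\]
where $X_{\bm\alpha}\in L(\sigma\mathcal{A})$ is the inclusion-minimal member containing $\bm\alpha$; the same argument gives $X_{(\bm\alpha',0)}=X_{\bm\alpha'}\times\{0\}$. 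This is exactly the isomorphism $L(\sigma\mathcal{A})\cong L(\tilde{\mathcal{A}}/H_{\tilde{\bm 0}})$ already exploited in the proof of \autoref{classification-2}, now read at the level of minimal containing elements.

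Finally, the hypothesis $X_{\bm\alpha}\subseteq X_{\bm\alpha'}$ in $L(\sigma\mathcal{A})$ translates instantly into $X_{(\bm\alpha,0)}\subseteq X_{(\bm\alpha',0)}$ in $L(\tilde{\mathcal{A}})$, and \autoref{comparison-1} applied to $(\bm\alpha,0)$ and $(\bm\alpha',0)$ delivers ${\rm INV}(\mathcal{A}+H_{\bm\alpha})\le{\rm INV}(\mathcal{A}+H_{\bm\alpha'})$. There is no real obstacle: the only thing to be careful about is the verification that the minimal member in $L(\tilde{\mathcal{A}})$ through $(\bm\alpha,0)$ is genuinely of the form $X_{\bm\alpha}\times\{0\}$ rather than some larger flat missing the $H_{\tilde{\bm 0}}$ factor, and this is forced by $(\bm\alpha,0)\in H_{\tilde{\bm 0}}$.
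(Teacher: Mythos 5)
Your proposal is correct and follows essentially the same route as the paper: both reduce to \autoref{comparison-1} via the identification $\bm\alpha\mapsto(\bm\alpha,0)$ and the isomorphism $L(\sigma\mathcal{A})\cong L(\tilde{\mathcal{A}}/H_{\tilde{\bm 0}})$, which shows the minimal flat of $L(\tilde{\mathcal{A}})$ through $(\bm\alpha,0)$ is $X_{\bm\alpha}\times\{0\}$ so that the containment hypothesis transfers directly. Your explicit computation of $X_{(\bm\alpha,0)}$ is just a slightly more hands-on phrasing of the equivalence the paper cites from the proof of \autoref{classification-2}.
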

\begin{proof}
Note that the isomorphism $L(\sigma\mathcal{A})\cong L\big(\tilde{\mathcal{A}}/H_{\tilde{\bf0}}\big)$ sends $X\in L(\sigma\mathcal{A})$ to $X\times\{0\}\in L\big(\tilde{\mathcal{A}}/H_{\tilde{\bf0}}\big)\subseteq L(\tilde{\mathcal{A}})$. In analogy with the argument of \autoref{classification-2}, we can obtain that $X_{\bm\alpha}\subseteq X_{\bm\alpha'}$ if and only if $X_{\bm\alpha}\times\{0\}\subseteq X_{\bm\alpha'}\times\{0\}$ with $X_{\bm\alpha}\times\{0\}, X_{\bm\alpha'}\times\{0\}\in L(\tilde{\mathcal{A}})$. Moreover, from the proof of \autoref{classification-2}, we have shown that for any $Y\in L(\sigma\mathcal{A})$, $\bm\beta\in M(\sigma\mathcal{A}/Y)$ if and only if $(\bm\beta,0)\in M\big(\tilde{\mathcal{A}}/Y\times\{0\}\big)$. Hence, \autoref{comparison-1} directly leads to the result.
\end{proof}
Analogous to the idea of the proof of \autoref{comparison-2}, together with the argument of \autoref{classification-3} (\autoref{classification-4}, resp.), \autoref{comparison-1} will also imply the following two results (resp.) whose proofs we leave as simple exercises for readers.
\begin{corollary}\label{comparison-3}
Let $\mathcal{A}=\big\{H_{\bm\alpha_i}\mid i\in[m]\big\}$ be an essential linear arrangement in $\mathbb{F}^{d}$. Given vectors $\bm\alpha,\bm\alpha'\in\mathbb{F}^d$ and $a,a'\in\mathbb{F}\setminus\{0\}$. If $X_{\bm\alpha}\subseteq X_{\bm\alpha'}$ for $X_{\bm\alpha}, X_{\bm\alpha'}\in L(\sigma\mathcal{A})$, then
\[
{\rm INV}(\mathcal{A}+ H_{\bm \alpha})\le{\rm INV}(\mathcal{A}+H_{\bm\alpha'}),{\rm INV}(\mathcal{A}+H_{\bm\alpha,a})\,\And\,{\rm INV}(\mathcal{A}+ H_{\bm\alpha,a})\le{\rm INV}(\mathcal{A}+H_{\bm\alpha',a'}).
\]
\end{corollary}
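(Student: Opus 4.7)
The plan is to derive all three inequalities as direct consequences of Theorem~\ref{comparison-1} applied to the induced adjoint arrangement $\tilde{\mathcal{A}}$, in exactly the way Corollary~\ref{classification-3} is deduced from Theorem~\ref{classification-1}. Since $\mathcal{A}$ is an essential linear arrangement, $L_0(\mathcal{A})=\{\bm 0\}$, so $\tilde{\mathcal{A}}=\{H_{\tilde{\bm 0}}\}\cup\tilde{\mathcal{A}}_1$, and two natural embeddings of $L(\sigma\mathcal{A})$ into $L(\tilde{\mathcal{A}})$ are available: $X\mapsto X\times\{0\}\in L(\tilde{\mathcal{A}}/H_{\tilde{\bm 0}})$ and $X\mapsto X\times\mathbb{F}\in L(\tilde{\mathcal{A}}_1)$.

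First I would identify the relevant inclusion-minimal elements of $L(\tilde{\mathcal{A}})$. Using the partitions
\[
H_{\tilde{\bm 0}}=\bigsqcup_{X\in L(\sigma\mathcal{A})}M(\tilde{\mathcal{A}}/X\times\{0\})\quad\text{and}\quad\mathbb{F}^{d+1}\setminus H_{\tilde{\bm 0}}=\bigsqcup_{X\in L(\sigma\mathcal{A})}M(\tilde{\mathcal{A}}/X\times\mathbb{F}),
\]
established in the proofs of Corollaries~\ref{classification-2} and \ref{classification-3}, we obtain $X_{(\bm\alpha,0)}=X_{\bm\alpha}\times\{0\}$ and $X_{(\bm\alpha,a)}=X_{\bm\alpha}\times\mathbb{F}$ for every $a\in\mathbb{F}\setminus\{0\}$, and similarly for the primed pair.

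Next I would verify the three relevant chains in $L(\tilde{\mathcal{A}})$. The hypothesis $X_{\bm\alpha}\subseteq X_{\bm\alpha'}$ in $L(\sigma\mathcal{A})$ immediately yields $X_{\bm\alpha}\times\{0\}\subseteq X_{\bm\alpha'}\times\{0\}$ and $X_{\bm\alpha}\times\mathbb{F}\subseteq X_{\bm\alpha'}\times\mathbb{F}$, while $X_{\bm\alpha}\times\{0\}=(X_{\bm\alpha}\times\mathbb{F})\cap H_{\tilde{\bm 0}}\subseteq X_{\bm\alpha}\times\mathbb{F}$. Applying Theorem~\ref{comparison-1} to each chain then yields, respectively, ${\rm INV}(\mathcal{A}+H_{\bm\alpha})\le{\rm INV}(\mathcal{A}+H_{\bm\alpha'})$, ${\rm INV}(\mathcal{A}+H_{\bm\alpha})\le{\rm INV}(\mathcal{A}+H_{\bm\alpha,a})$, and ${\rm INV}(\mathcal{A}+H_{\bm\alpha,a})\le{\rm INV}(\mathcal{A}+H_{\bm\alpha',a'})$, which together are precisely the assertion.

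The heavy combinatorial work has already been absorbed into Theorem~\ref{comparison-1}, so no new estimate of Whitney numbers or NBC sets is required. The only delicate point is book-keeping on the adjoint side: one must confirm that $X_{\bm\alpha}\times\{0\}$ actually lies in $L(\tilde{\mathcal{A}})$, which is automatic here since $H_{\tilde{\bm 0}}\in\tilde{\mathcal{A}}$ by $L_0(\mathcal{A})=\{\bm 0\}$, and that the two embeddings of $L(\sigma\mathcal{A})$ into $L(\tilde{\mathcal{A}})$ yield the stated descriptions of $X_{(\bm\alpha,0)}$ and $X_{(\bm\alpha,a)}$. Once these identifications are in place, the corollary follows at once.
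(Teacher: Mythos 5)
Your proposal is correct and follows exactly the route the paper indicates for this corollary (which it leaves as an exercise): reduce to Theorem~\ref{comparison-1} via the identifications $X_{(\bm\alpha,0)}=X_{\bm\alpha}\times\{0\}$ and $X_{(\bm\alpha,a)}=X_{\bm\alpha}\times\mathbb{F}$ coming from the partitions used in the proofs of Corollaries~\ref{classification-2} and~\ref{classification-3}, together with the chain $X_{\bm\alpha}\times\{0\}\subseteq X_{\bm\alpha}\times\mathbb{F}$ for the middle inequality. The three inclusions in $L(\tilde{\mathcal{A}})$ are verified correctly, so the argument is complete.
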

\begin{corollary}\label{comparison-4}
Suppose $\mathcal{A}$ is essential. Given vectors $\bm\alpha,\bm\alpha'\in\mathbb{F}^d$. If $X_{\bm\alpha}\subseteq X_{\bm\alpha'}$ for $X_{\bm\alpha}, X_{\bm\alpha'}\in L(\bar{\mathcal{A}})$, then
\[
{\rm INV}(\mathcal{A}+ H_{\bm \alpha})\le{\rm INV}(\mathcal{A}+H_{\bm\alpha'}).
\]
\end{corollary}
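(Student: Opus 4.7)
The plan is to reduce \autoref{comparison-4} to \autoref{comparison-1} by lifting the inclusion $X_{\bm\alpha}\subseteq X_{\bm\alpha'}$ in $L(\bar{\mathcal{A}})$ to an inclusion $X_{(\bm\alpha,0)}\subseteq X_{(\bm\alpha',0)}$ in $L(\tilde{\mathcal{A}})$, mirroring how \autoref{comparison-2} is derived from \autoref{classification-2} and \autoref{comparison-1}. Since $H_{\bm\alpha}=H_{\bm\alpha,0}$, once this lift is established, \autoref{comparison-1} applied to the pair $(\bm\alpha,0),(\bm\alpha',0)\in\mathbb{F}^{d+1}$ delivers the conclusion.

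First, I would pin down an explicit description of $X_{(\bm\alpha,0)}\in L(\tilde{\mathcal{A}})$. Because $\tilde{\bm u}_i\cdot(\bm\alpha,0)=\bm u_i\cdot\bm\alpha$ and $\tilde{\bm v}_j\cdot(\bm\alpha,0)=\bm v_j\cdot\bm\alpha$, the hyperplanes of $\tilde{\mathcal{A}}$ through $(\bm\alpha,0)$ are exactly those indexed by
\[
I_1(\bm\alpha)=\{i\in[n_1]\mid \bm u_i\cdot\bm\alpha=0\},\qquad I_2(\bm\alpha)=\{j\in[n_0]\mid \bm v_j\cdot\bm\alpha=0\},
\]
so that $X_{(\bm\alpha,0)}=\bigcap_{i\in I_1(\bm\alpha)}H_{\tilde{\bm u}_i}\cap\bigcap_{j\in I_2(\bm\alpha)}H_{\tilde{\bm v}_j}$. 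The very same index sets also describe $X_{\bm\alpha}$ inside $\bar{\mathcal{A}}=\mathcal{A}_0\cup\sigma\mathcal{A}^o$, namely $X_{\bm\alpha}=\bigcap_{i\in I_1(\bm\alpha)}H_{\bm u_i}\cap\bigcap_{j\in I_2(\bm\alpha)}H_{\bm v_j}$, because these are precisely the members of $\bar{\mathcal{A}}$ containing $\bm\alpha$.

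Next I would check that $X_{\bm\alpha}\subseteq X_{\bm\alpha'}$ in $L(\bar{\mathcal{A}})$ forces the containments $I_1(\bm\alpha')\subseteq I_1(\bm\alpha)$ and $I_2(\bm\alpha')\subseteq I_2(\bm\alpha)$: any $H\in\bar{\mathcal{A}}$ containing $X_{\bm\alpha'}$ (equivalently, containing $\bm\alpha'$) also contains $X_{\bm\alpha}$ (hence $\bm\alpha$), so the defining index data shrinks when passing from $\bm\alpha$ to $\bm\alpha'$. Substituting this into the explicit description above immediately gives
\[
X_{(\bm\alpha,0)}=\bigcap_{i\in I_1(\bm\alpha)}H_{\tilde{\bm u}_i}\cap\bigcap_{j\in I_2(\bm\alpha)}H_{\tilde{\bm v}_j}\subseteq \bigcap_{i\in I_1(\bm\alpha')}H_{\tilde{\bm u}_i}\cap\bigcap_{j\in I_2(\bm\alpha')}H_{\tilde{\bm v}_j}=X_{(\bm\alpha',0)}
\]
in $L(\tilde{\mathcal{A}})$. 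Applying \autoref{comparison-1} to $(\bm\alpha,0),(\bm\alpha',0)$ then yields ${\rm INV}(\mathcal{A}+H_{\bm\alpha})={\rm INV}(\mathcal{A}+H_{\bm\alpha,0})\le{\rm INV}(\mathcal{A}+H_{\bm\alpha',0})={\rm INV}(\mathcal{A}+H_{\bm\alpha'})$.

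No serious obstacle is expected; the only mildly delicate point is the lifting of inclusions from $L(\bar{\mathcal{A}})$ to $L(\tilde{\mathcal{A}})$, since for a general arrangement $\mathcal{B}$ with a distinguished hyperplane $H_0$ the relation $X\cap H_0\subseteq X'\cap H_0$ in $L(\mathcal{B}/H_0)$ need not imply $X\subseteq X'$ in $L(\mathcal{B})$. What makes the implication work here is the very particular shape of the normals in $\tilde{\mathcal{A}}$, namely $(\bm u_i,0)$ and $(\bm v_j,-1)$, which makes incidence of $(\bm\alpha,0)$ with any hyperplane of $\tilde{\mathcal{A}}$ equivalent to incidence of $\bm\alpha$ with the corresponding hyperplane of $\bar{\mathcal{A}}$.
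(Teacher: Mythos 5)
Your proposal is correct and follows the route the paper intends (the paper leaves this corollary as an exercise, citing \autoref{comparison-1} together with the argument of \autoref{classification-4}): the whole point is to lift the containment $X_{\bm\alpha}\subseteq X_{\bm\alpha'}$ in $L(\bar{\mathcal{A}})$ to $X_{(\bm\alpha,0)}\subseteq X_{(\bm\alpha',0)}$ in $L(\tilde{\mathcal{A}})$ and then apply \autoref{comparison-1} to $(\bm\alpha,0),(\bm\alpha',0)$, exactly as you do. The only cosmetic difference is that you verify the lift by a direct index-set computation (using that $\tilde{\bm u}_i\cdot(\bm\alpha,0)=\bm u_i\cdot\bm\alpha$ and $\tilde{\bm v}_j\cdot(\bm\alpha,0)=\bm v_j\cdot\bm\alpha$, so the minimal flats over $\bm\alpha$ and over $(\bm\alpha,0)$ are cut out by the same hyperplane indices), whereas the paper would reuse the isomorphism $L(\bar{\mathcal{A}})\cong L(\tilde{\mathcal{A}}/H_{\tilde{\bm 0}})$ and the complement inclusion established in the proof of \autoref{classification-4}; both verifications are equivalent, and yours correctly addresses the one delicate point (that containment of restricted flats lifts here because incidence with hyperplanes of $\tilde{\mathcal{A}}$ at points of $H_{\tilde{\bm 0}}$ matches incidence with hyperplanes of $\bar{\mathcal{A}}$).
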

\section{Proof of \autoref{Convolution-Formula}}\label{SEC4-0}
To obtain \autoref{Convolution-Formula}, we need to introduce the method based on finite field for evaluating the characteristic polynomial of hyperplane arrangement defined over some finite field $\mathbb{F}_q$. A {\em rational arrangement} $\mathcal{A}$ in $\mathbb{R}^d$ consists of hyperplanes $H$ having the following form
\begin{equation*}\label{rational-arrangement}
H:\bm\alpha\cdot\bm x=a\quad\For\quad (\bm\alpha,a)\in\mathbb{Z}^d\times\mathbb{Z}.
\end{equation*}
Then the subset $H_q$ in $\mathbb{F}_q^d$ consists of all $d$-tuples $(x_1,\ldots,x_d)$ satisfying the defining equation of $H$ in $\mathbb{F}_q$, where $q=p^r$ for some prime number $p$. For a large enough prime number $p$, it automatically yields a hyperplane arrangement $\mathcal{A}_q$ in $\mathbb{F}_q^d$ consisting of the hyperplanes $H_q$ associated with $\mathcal{A}$ such that $L(\mathcal{A})\cong L(\mathcal{A}_q)$, see \cite[Proposition 5.13]{Stanley2007}. The following result was first implicit in the work of Crapo and Rota \cite{Crapo-Rota1970} in 1970.  At the end of the 20th Century,  Athanasiadis \cite{Athanasiadis1996} and Bj\"{o}rner and Ekedahl \cite{Bjorner-Ekedahl1997} showed that the characteristic polynomial $\chi(\mathcal{A},q)$ exactly counts the number of all points in $\mathbb{F}_q^d$ that do not lie in any of hyperplanes in $\mathcal{A}$.
\begin{theorem}{\rm\cite{Athanasiadis1996,Bjorner-Ekedahl1997}}\label{characteristic-field}
Let $\mathcal{A}$ be a hyperplane arrangement in $\mathbb{F}^d$.
\begin{itemize}
\item[\i]  If $\mathbb{F}$ is a finite field $\mathbb{F}_q$, then $\chi(\mathcal{A},q)=\#\big(\mathbb{F}_q^d\setminus\bigcup_{H\in\mathcal{A}}H\big)$.
\item[\ii] If $\mathcal{A}$ is a rational arrangement in $\mathbb{R}^d$ and $L(\mathcal{A})\cong L(\mathcal{A}_q)$ for some prime power $q$, then
$\chi(\mathcal{A},q)=\chi(\mathcal{A}_q,q).$
\end{itemize}
\end{theorem}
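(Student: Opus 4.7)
The plan is a standard Möbius inversion argument on the intersection semi-lattice, followed by a transfer of the resulting identity from $\mathbb{F}_q$ to a rational arrangement via the lattice isomorphism.

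For part (i), I would first observe that, by the partition in \eqref{complement-partition} applied in $\mathbb{F}_q^d$, every $X\in L(\mathcal{A})$ decomposes as
\begin{equation*}
X=\bigsqcup_{\substack{Y\in L(\mathcal{A})\\Y\subseteq X}} M(\mathcal{A}/Y),
\end{equation*}
because a point $\bm x\in X$ lies in $M(\mathcal{A}/Y)$ for a unique minimal flat $Y$ containing $\bm x$, and such a $Y$ is automatically contained in $X$. Counting both sides gives $q^{\dim X}=\sum_{Y\leq X\text{ in }L(\mathcal{A})}\#M(\mathcal{A}/Y)$, where I use the reverse-inclusion ordering of $L(\mathcal{A})$ so that $Y\leq X$ in $L(\mathcal{A})$ means $X\subseteq Y$. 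Rewriting with the correct ordering: for every $X\in L(\mathcal{A})$,
\begin{equation*}
q^{\dim X}=\sum_{Y\geq X\text{ in }L(\mathcal{A})}\#M(\mathcal{A}/Y).
\end{equation*}

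Next I would apply Möbius inversion over $L(\mathcal{A})$ to this identity evaluated at the minimal element $V$. This yields
\begin{equation*}
\#M(\mathcal{A})=\#M(\mathcal{A}/V)=\sum_{X\in L(\mathcal{A})}\mu(V,X)\,q^{\dim X}=\chi(\mathcal{A},q),
\end{equation*}
which is exactly part (i). The only subtlety to check is the case where $V\in\mathcal{A}$, in which case both sides are zero by the conventions set earlier in the paper. I do not expect any real obstacle here; the whole argument is a textbook Möbius inversion, and the key input is the set partition \eqref{complement-partition}, which was already established in the preliminaries.

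For part (ii), the argument reduces to noting that the characteristic polynomial depends only on the intersection poset together with the dimension function on it. Since any isomorphism $\varphi\colon L(\mathcal{A})\to L(\mathcal{A}_q)$ preserves rank (the rank of a flat being determined by the length of maximal chains in the poset), and since $\dim(X)=d-\rank_{\mathcal{A}}(X)$ by the definition recalled at the start of Section~3.1, the isomorphism automatically preserves dimensions. It also preserves Möbius values $\mu(V,X)$ because these are computed purely from the poset structure. Therefore, term by term,
\begin{equation*}
\chi(\mathcal{A},t)=\sum_{X\in L(\mathcal{A})}\mu(V,X)\,t^{\dim X}=\sum_{Y\in L(\mathcal{A}_q)}\mu(\mathbb{F}_q^d,Y)\,t^{\dim Y}=\chi(\mathcal{A}_q,t),
\end{equation*}
and specializing at $t=q$ together with part (i) gives $\chi(\mathcal{A},q)=\chi(\mathcal{A}_q,q)=\#\bigl(\mathbb{F}_q^d\setminus\bigcup_{H\in\mathcal{A}_q}H\bigr)$. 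The only mildly delicate point — and the place where something could in principle fail — is the ambient-dimension comparison: both arrangements live in $d$-dimensional spaces ($\mathbb{R}^d$ and $\mathbb{F}_q^d$), so the top element of each lattice has dimension $d$, and the isomorphism is dimension-preserving throughout. This is guaranteed by the hypothesis $L(\mathcal{A})\cong L(\mathcal{A}_q)$ being a lattice isomorphism, not merely a poset isomorphism of flats. I anticipate no real obstacle; the content of the theorem is essentially packaged in the partition \eqref{complement-partition} and Möbius inversion.
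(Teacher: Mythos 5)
Your proof is correct. Note, however, that the paper does not prove this statement at all: it is imported verbatim from the cited references \cite{Athanasiadis1996,Bjorner-Ekedahl1997}, so there is no in-paper argument to compare against. Your part (i) is exactly the standard M\"obius-inversion argument of Athanasiadis (and, earlier, Crapo--Rota), resting on the partition \eqref{complement-partition} restricted to each flat $X$ and inverted at the minimal element $V$; your part (ii) correctly reduces to the fact that $\chi$ is determined by the poset together with the codimension grading, and that a poset isomorphism between intersection semi-lattices of arrangements in ambient spaces of equal dimension $d$ preserves rank (chain lengths) and M\"obius values, hence dimensions and the polynomial itself. The one remark worth making is that your final caveat is unnecessary: a mere poset isomorphism already suffices, since the intersection semi-lattice of an arrangement is graded by codimension and the rank of a flat is recoverable from maximal chain lengths, so nothing extra is bought by insisting on a ``lattice'' isomorphism.
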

Notice from the definition of the induced adjoint arrangement in \eqref{extension-adjoint-arrangement} that when $\mathcal{A}$ is rational, the induced adjoint arrangement $\tilde{\mathcal{A}}$ is a rational arrangement as well. Below is an important lemma to verifying \autoref{Convolution-Formula}, which follows from \cite[Proposition 5.13]{Stanley2007} directly.
\begin{lemma}\label{cong1}
Let $\mathcal{A}$ be an essential rational arrangement in $\mathbb{R}^d$. For a large enough prime number $p$ and a fixed integer vector $(\bm\alpha,a)\in\mathbb{Z}^{d+1}$,
\[
L(\mathcal{A})\cong L(\mathcal{A}_p),\; L(\tilde{\mathcal{A}})\cong L(\tilde{\mathcal{A}_p}) \And L(\mathcal{A}+H_{\bm\alpha,a})\cong L\big((\mathcal{A}+H_{\bm\alpha,a})_p\big)
\]
holds.
\end{lemma}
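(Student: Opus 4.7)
The plan is to apply \cite[Proposition 5.13]{Stanley2007} three times, once to each of the arrangements $\mathcal{A}$, $\tilde{\mathcal{A}}$, and $\mathcal{A}+H_{\bm\alpha,a}$, and then take the prime $p$ to exceed the maximum of the three "bad prime" bounds produced. Stanley's proposition says that a rational arrangement $\mathcal{B}$ satisfies $L(\mathcal{B})\cong L(\mathcal{B}_p)$ for all sufficiently large primes $p$, since the failure of an intersection pattern amounts to the vanishing modulo $p$ of one of finitely many nonzero integer determinants built from the defining matrix of $\mathcal{B}$.

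First I would verify that all three arrangements are rational. The arrangement $\mathcal{A}$ is rational by hypothesis, and $\mathcal{A}+H_{\bm\alpha,a}$ is rational because $(\bm\alpha,a)\in\mathbb{Z}^{d+1}$ and the enlarged defining data is still integral. For $\tilde{\mathcal{A}}$, observe that each $0$-dimensional flat ${\bm v}_i\in L_0(\mathcal{A})$ is the unique solution of a square integer linear system extracted from the defining equations of $\mathcal{A}$, hence ${\bm v}_i\in\mathbb{Q}^d$; similarly each $1$-dimensional flat $\mathbb{R}{\bm u}_i\in L_1(\mathcal{A}^o)$ has a direction vector ${\bm u}_i$ obtainable as the solution of an integer homogeneous system (up to scalar), hence ${\bm u}_i$ may be chosen in $\mathbb{Q}^d$. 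Clearing denominators, we may assume ${\bm v}_i,{\bm u}_i\in\mathbb{Z}^d$, whence $\tilde{\bm v}_i=({\bm v}_i,-1)\in\mathbb{Z}^{d+1}$ and $\tilde{\bm u}_i=({\bm u}_i,0)\in\mathbb{Z}^{d+1}$ show that $\tilde{\mathcal{A}}$ is a rational arrangement in $\mathbb{R}^{d+1}$.

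Next, applying \cite[Proposition 5.13]{Stanley2007} to each of the three rational arrangements produces three thresholds $p_1,p_2,p_3$ such that $L(\mathcal{A})\cong L(\mathcal{A}_p)$ for $p\ge p_1$, $L(\tilde{\mathcal{A}})\cong L(\tilde{\mathcal{A}}_p)$ for $p\ge p_2$, and $L(\mathcal{A}+H_{\bm\alpha,a})\cong L\bigl((\mathcal{A}+H_{\bm\alpha,a})_p\bigr)$ for $p\ge p_3$. Choosing $p\ge\max\{p_1,p_2,p_3\}$ yields the three isomorphisms simultaneously. The only slightly subtle point is the identification $\tilde{\mathcal{A}}_p=\widetilde{\mathcal{A}_p}$: this is immediate since, for $p$ large enough, $L(\mathcal{A})\cong L(\mathcal{A}_p)$ implies that $L_0(\mathcal{A}_p)$ and $L_1(\mathcal{A}_p^o)$ are represented by the same integer vectors $\{{\bm v}_i\}$ and $\{{\bm u}_i\}$ reduced modulo $p$, so that $\widetilde{\mathcal{A}_p}$ coincides with the mod-$p$ reduction of $\tilde{\mathcal{A}}$.

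The only real "obstacle" is this last compatibility step — ensuring that the $0$-flats and $1$-lines used to build $\widetilde{\mathcal{A}_p}$ are exactly the reductions mod $p$ of those used to build $\tilde{\mathcal{A}}$. This is handled by enlarging $p$ further so that: (i) the integer determinants certifying which subsystems of $\mathcal{A}$ cut out each $0$-flat remain nonzero mod $p$, and (ii) the analogous determinants for the linearization $\mathcal{A}^o$ do as well. Since there are only finitely many such determinants, absorbing them into the threshold completes the proof.
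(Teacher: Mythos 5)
Your proposal is correct and follows essentially the same route as the paper: the paper gives no argument beyond remarking that $\tilde{\mathcal{A}}$ is rational when $\mathcal{A}$ is and asserting that the lemma follows directly from \cite[Proposition 5.13]{Stanley2007}. In fact you supply more detail than the paper does, in particular the compatibility check that $\widetilde{\mathcal{A}_p}$ coincides with the mod-$p$ reduction of $\tilde{\mathcal{A}}$ for large $p$ (via the finitely many integer determinants attached to the $0$-flats of $\mathcal{A}$ and the $1$-flats of $\mathcal{A}^o$), a point the paper treats as immediate.
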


\begin{proof}[Proof of \autoref{Convolution-Formula}]
For the case (a), let $\Omega$ be the set consisting of pairs $\big((\bm\alpha,a),\bm x\big)\in\mathbb{F}_q^{d+1}\times\mathbb{F}_q^d$ such that for fixed $(\bm \alpha,a)\in\mathbb{F}_q^{d+1}$, the points $\bm x$ do not lie in any of the hyperplanes in $\mathcal{A}+H_{\bm\alpha,a}$. Namely,
\[
\Omega=\big\{\big((\bm\alpha,a),\bm x\big)\mid (\bm\alpha,a)\in\mathbb{F}_q^{d+1}\And\bm x\in M(\mathcal{A}+H_{\bm\alpha,a})\big\}.
\]
Next we will count the number of members of $\Omega$ in two different ways. When the origin does not belong to $M(\mathcal{A})$, we have
\begin{eqnarray}\label{EQ}
\#\Omega&=&\#\Big\{\big((\bm\alpha,a),\bm x\big)\mid (\bm\alpha,a)\in\mathbb{F}_q^{d+1}=\bigsqcup_{X\in L(\tilde{\mathcal{A}})}M(\tilde{\mathcal{A}}/X) \And \bm x\in M(\mathcal{A}+H_{\bm\alpha,a})\Big\}\notag\\
&=&\sum_{X\in L(\tilde{\mathcal{A}})}\#\big\{\big((\bm\alpha,a),\bm x\big)\mid (\bm\alpha,a)\in M(\tilde{\mathcal{A}}/X) \And \bm x\in M(\mathcal{A}+H_{\bm\alpha,a})\big\}\notag\\
&=&\sum_{X\in L(\tilde{\mathcal{A}})}\sum_{(\bm\alpha,a)\in M(\tilde{\mathcal{A}}/X)}\#M(\mathcal{A}+H_{\bm\alpha,a}).
\end{eqnarray}
Recall from \autoref{classification-1} that $\chi(X,q)=\chi(\mathcal{A}+H_{\bm\alpha,a},q)$ for any $(\bm\alpha,a)\in M(\tilde{\mathcal{A}}/X)$. Applying \i in \autoref{characteristic-field} to \eqref{EQ}, we obtain
\begin{eqnarray}\label{left1}
\#\Omega=\sum_{X\in L(\tilde{\mathcal{A}})}\sum_{(\bm\alpha,a)\in M(\tilde{\mathcal{A}}/X)}\chi(\mathcal{A}+H_{\bm\alpha,a},q)=\sum_{X\in L(\tilde{\mathcal{A}})}\chi(\tilde{\mathcal{A}}/X,q)\chi(X,q).
\end{eqnarray}
On the other hand, noting that for any $(\bm\alpha,a)\in\mathbb{F}_q^{d+1}$, $\bm x\in M\big(\mathcal{A}+H_{\bm\alpha,a}\big)$ if and only if $\bm x\in M(\mathcal{A})$ and $\bm\alpha\cdot\bm x\ne a$,  then $\Omega$ can be written as
\begin{eqnarray*}
\Omega=\big\{\big((\bm\alpha,a),\bm x\big)\mid (\bm\alpha,a)\in\mathbb{F}_q^{d+1},\;\bm x\in M(\mathcal{A})\And \bm\alpha\cdot\bm x\ne a\big\}.
\end{eqnarray*}
It is easily seen that
\begin{eqnarray*}
\#\Omega&=&\sum_{\bm x\in M(\mathcal{A})}\#\big\{\big((\bm\alpha,a),\bm x\big)\mid (\bm\alpha,a)\in\mathbb{F}_q^{d+1}\And \bm\alpha\cdot\bm x\ne a\big\}\\
&=&\sum_{\bm x\in M(\mathcal{A})}\sum_{a=0}^{q-1}\#\big\{\big((\bm\alpha,a),\bm x\big)\mid \bm\alpha\in\mathbb{F}_q^d\And \bm\alpha\cdot\bm x\ne a\big\}\\
&=&\sum_{\bm x\in M(\mathcal{A})}q(q^d-q^{d-1})=q^d(q-1)\# M(\mathcal{A}).
\end{eqnarray*}
It follows from \i in \autoref{characteristic-field} that $\#\Omega=q^d(q-1)\chi(\mathcal{A},q)$. Together with \eqref{left1}, we get
\begin{equation}\label{finite-convolutionformula}
\sum_{X\in L(\tilde{\mathcal{A})}} \chi(\tilde{\mathcal{A}}/X,q)\chi(X,q)=q^d(q-1)\chi(\mathcal{A},q).
\end{equation}
When the origin belongs to $M(\mathcal{A})$, we can obtain \eqref{finite-convolutionformula} by using the similar argument as the first case. More generally, for any finite field $\mathbb{F}_{q'}$ containing $\mathbb{F}_q$, we can treat  $\mathcal{A}$ as a hyperplane arrangement over $\mathbb{F}_{q'}$. Then the equality \eqref{finite-convolutionformula} holds for  infinitely many $q'$.  The fundamental theorem of algebra says that if two polynomials agree on infinitely many values, then they are the same polynomial. This means that it is a polynomial identity in $t$.

For the case (b), when $\mathcal{A}$ is a rational arrangement in $\mathbb{R}^d$, the induced adjoint arrangement $\tilde{\mathcal{A}}$ is a rational arrangement in $\mathbb{R}^{d+1}$ as well. For any $X\in L(\tilde{\mathcal{A}})$, we can choose a representative $(\bm\alpha_X,a_X)\in\mathbb{Q}^{d+1}$ of $M(\tilde{\mathcal{A}}/X)$ based on the denseness of rational numbers. Noting that $(\bm\alpha_X,a_X)\in M(\tilde{\mathcal{A}}/X)$ if and only if $(n\bm\alpha_X,na_X)\in M(\tilde{\mathcal{A}}/X)$ for any positive integers $n$, then we always assume $(\bm\alpha_X,a_X)\in\mathbb{Z}^{d+1}$ from now on. Applying \autoref{cong1}, for a large enough prime number $p$, we have
\begin{equation}\label{cong2}
L(\mathcal{A})\cong L(\mathcal{A}_p),\; L(\tilde{\mathcal{A}})\cong L(\tilde{\mathcal{A}_p}) \And L(\mathcal{A}+H_{\bm\alpha_X,a_X})\cong L\big((\mathcal{A}+H_{\bm\alpha_X,a_X})_p\big)
\end{equation}
for all $X\in L(\tilde{\mathcal{A}})$. Noticing that $\mathcal{A}_p$ is an arrangement in $\mathbb{F}_p^d$,  then from \eqref{finite-convolutionformula}, we arrive at
\begin{equation}\label{finite-convolutionformula1}
\sum_{X\in L(\tilde{\mathcal{A}_p})}\chi(\tilde{\mathcal{A}_p}/X,p)\chi(X,p)=p^d(p-1)\chi(\mathcal{A}_p,p).
\end{equation}
Together with \eqref{cong2} and (ii) of \autoref{characteristic-field}, we obtain
\[
\chi(\tilde{\mathcal{A}}/X,p)=\chi\big((\tilde{\mathcal{A}}/X)_p,p\big) ,\,\chi(\mathcal{A}+H_{\bm\alpha_X,a_X},p)=\chi\big((\mathcal{A}+H_{\bm\alpha_X,a_X})_p,p\big),\,\chi(\mathcal{A},p)=\chi(\mathcal{A}_p,p).
\]
By using the above relations to \eqref{finite-convolutionformula1}, we obtain that there exist infinitely many prime numbers $p$ satisfying
\begin{equation}\label{Prime}
\sum_{X\in L(\tilde{\mathcal{A}})}\chi(\tilde{\mathcal{A}}/X,p)\chi(X,p)=p^d(p-1)\chi(\mathcal{A},p).
\end{equation}
According to the definition \eqref{Characteristic-Polynomial} of characteristic polynomial, clearly $\sum_{ X\in L(\tilde{\mathcal{A}})}\chi(\tilde{\mathcal{A}}/X,t)\chi(X,t)$ and $t^d(t-1)\chi(\mathcal{A},t)$ are polynomials in $t$ of degree at most $2d+1$. Thus these two polynomials agree on infinitely many prime numbers via \eqref{Prime}. Then we have $\sum_{X\in L(\tilde{\mathcal{A}})}\chi(\tilde{\mathcal{A}}/X,p)\chi(X,p)=t^d(t-1)\chi(\mathcal{A},t)$ from the fundamental theorem of algebra. We finish the proof.
\end{proof}

\section{Restriction of hyperplane arrangements}\label{SEC4}
As an application of the one-element extension of hyperplane arrangement, we shall further investigate the restriction to hyperplane of a hyperplane arrangement. For any hyperplane $H$ of $\mathbb{F}^d$, the restriction $\mathcal{A}/H$ of $\mathcal{A}$ on $H$ is a hyperplane arrangement in $H$ defined as
\[
\mathcal{A}/H:=\{H'\cap H\ne\emptyset\mid H'\in \mathcal{A},H\nsubseteq H'\}.
\]
In particular, when $\mathbb{F}^d$ is a Euclidean space, Zaslavsky's formula \cite{Zaslavsky1975} shows
\[
r(\mathcal{A}/H)=(-1)^{d-1}\chi(\mathcal{A}/H, -1).
\]
Most recently, Fu-Wang \cite{Fu-Wang2021} studied the restrictions of a linear arrangement, which gives the classification of those restrictions and establishes the order-preserving relations of their Whitney numbers of both kinds and region numbers. In this section, we will extend Fu-Wang's result to general hyperplane arrangements and classify the restrictions $\mathcal{A}/H$ for all hyperplanes $H$ of $\mathbb{F}^d$.

Recall that $O$ is the space spanned by the normal vectors ${\bm\alpha}_1,{\bm\alpha}_2,\ldots,{\bm\alpha}_m$ of the hyperplanes in $\mathcal{A}$. Clearly the restriction $\mathcal{A}/O=\{H\cap O\ne\emptyset\mid H\in\mathcal{A}, O\nsubseteq H\}$ is an essential arrangement in $O$, and
\[
L(\mathcal{A})\cong L(\mathcal{A}/O)\quad\And\quad\chi(\mathcal{A},t)=t^{d-\dim(O)}\chi(\mathcal{A}/O,t).
\]
So we may assume that $\mathcal{A}$ is essential. Then \autoref{classification-1} directly gives rise to the following result, which classifies $L(\mathcal{A}/H)$ and the characteristic polynomials $\chi(\mathcal{A}/H,t)$ for all hyperplanes $H$ in $\mathbb{F}^d$  via the induced adjoint arrangement $\tilde{\mathcal{A}}$ defined in \eqref{extension-adjoint-arrangement}.
\begin{corollary}\label{contraction}
Suppose $\mathcal{A}$ is essential. Let $H:\bm\alpha\cdot\bm x=a$ and $H':\bm\alpha'\cdot\bm x=a'$.  If $X_{(\bm\alpha,a)}\subseteq X_{(\bm\alpha',a')}$ for $X_{(\bm\alpha,a)}, X_{(\bm\alpha',a')}\in L(\tilde{\mathcal{A}})$, then
\[
L(\mathcal{A}/H)\cong L(\mathcal{A}/H')\quad\And\quad \chi(\mathcal{A}/H,t)=\chi(\mathcal{A}/H',t).
\]
\end{corollary}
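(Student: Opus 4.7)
The plan is to realize $L(\mathcal{A}/H)$ as a principal filter inside $L(\mathcal{A}+H_{\bm\alpha,a})$ and then transport the lattice isomorphism supplied by \autoref{classification-1} onto that filter. Concretely, every element of $L(\mathcal{A}/H)$ has the form $Y\cap H_{\bm\alpha,a}$ for some $Y\in L(\mathcal{A})$ with $Y\cap H_{\bm\alpha,a}\ne\emptyset$, and conversely every $Z\in L(\mathcal{A}+H_{\bm\alpha,a})$ with $Z\subseteq H_{\bm\alpha,a}$ arises this way; hence
\[
L(\mathcal{A}/H)\;\cong\;\{Z\in L(\mathcal{A}+H_{\bm\alpha,a}):Z\subseteq H_{\bm\alpha,a}\},
\]
which, in the reverse-inclusion order, is precisely the principal filter above $H_{\bm\alpha,a}$. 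The analogous identification holds for $H'$.

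The hypothesis places $(\bm\alpha,a)$ and $(\bm\alpha',a')$ in a common open stratum $M(\tilde{\mathcal{A}}/X)$ of the induced adjoint arrangement, and then \autoref{classification-1} furnishes the explicit order- and dimension-preserving bijection
\[
\iota:L(\mathcal{A}+H_{\bm\alpha,a})\longrightarrow L(\mathcal{A}+H_{\bm\alpha',a'}),\qquad Y\mapsto Y\;\;(Y\in L(\mathcal{A})),\qquad Y\cap H_{\bm\alpha,a}\mapsto Y\cap H_{\bm\alpha',a'}.
\]
Taking $Y=\mathbb{F}^d$, the minimum of $L(\mathcal{A})$, gives $H_{\bm\alpha,a}=\mathbb{F}^d\cap H_{\bm\alpha,a}$, which is a new element of $L(\mathcal{A}+H_{\bm\alpha,a})$ (provided $H_{\bm\alpha,a}\notin\mathcal{A}$), so the prescription of $\iota$ yields $\iota(H_{\bm\alpha,a})=H_{\bm\alpha',a'}$. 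Because $\iota$ preserves both the reverse-inclusion order and the dimension function, it therefore restricts to a bijection between the principal filter above $H_{\bm\alpha,a}$ and the principal filter above $H_{\bm\alpha',a'}$, which under the identification above is exactly the desired isomorphism $L(\mathcal{A}/H)\cong L(\mathcal{A}/H')$.

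The equality of characteristic polynomials is then immediate, since $\chi(\mathcal{A}/H,t)$ is a M\"obius-theoretic invariant of the intersection semi-lattice together with its dimension grading, both of which are preserved by $\iota$. I anticipate no substantive obstacle: the corollary is a formal consequence of the structural statement of \autoref{classification-1}, the only point requiring verification being that the canonical iso $\iota$ carries the distinguished new element $H_{\bm\alpha,a}$ to the distinguished new element $H_{\bm\alpha',a'}$, and this is transparent from the definition of $\iota$. The degenerate edge case $H_{\bm\alpha,a}\in\mathcal{A}$ forces the same on $H_{\bm\alpha',a'}$ by the stratum condition and is handled symmetrically.
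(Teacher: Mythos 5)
Your proposal follows essentially the same route as the paper: the paper also invokes \autoref{classification-1} to get $L(\mathcal{A}+H)\cong L(\mathcal{A}+H')$ and notes that this isomorphism directly induces $L\big((\mathcal{A}+H)/H\big)\cong L\big((\mathcal{A}+H')/H'\big)$, which is exactly your restriction of $\iota$ to the principal filter above the added hyperplane, with the equality of characteristic polynomials read off from the dimension-preserving poset isomorphism. Your extra details (the filter identification, $\iota(H_{\bm\alpha,a})=H_{\bm\alpha',a'}$, the edge case $H_{\bm\alpha,a}\in\mathcal{A}$) only spell out what the paper leaves implicit, and you share with the paper the tacit reading of the hypothesis $X_{(\bm\alpha,a)}\subseteq X_{(\bm\alpha',a')}$ as putting both points in a common stratum, which is what \autoref{classification-1} actually requires.
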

\begin{proof}
Recall from \autoref{classification-1} that we have $L(\mathcal{A}+H)\cong L(\mathcal{A}+H')$. It directly induces the isomorphism $L(\mathcal{A}+H/H)\cong L(\mathcal{A}+H'/H')$. So we obtain $L(\mathcal{A}/H)=L(\mathcal{A}+H/H)\cong L(\mathcal{A}+H'/H')=L(\mathcal{A}/H')$. From the definition of the characteristic polynomial, we get $\chi(\mathcal{A}/H,t)=\chi(\mathcal{A}/H',t)$ immediately.
\end{proof}
Below we will establish the order-preserving relations of the Whitney numbers of both kinds and region numbers of restrictions $\mathcal{A}/H$ for all hyperplanes of $\mathbb{F}^d$ via the geometric lattice $L(\tilde{\mathcal{A}})$.
\begin{corollary}\label{restriction1}
Suppose $\mathcal{A}$ is essential. Let $H:\bm\alpha\cdot\bm x=a$ and $H':\bm\alpha'\cdot\bm x=a'$.  If $X_{(\bm\alpha,a)}\subseteq X_{(\bm\alpha',a')}$ for $X_{(\bm\alpha,a)}, X_{(\bm\alpha',a')}\in L(\tilde{\mathcal{A}})$, then for $i=0,1,\ldots,d-1$, we have
\[
 w_i^+(\mathcal{A}/H)\le w_i^+(\mathcal{A}/H'),\quad W_i(\mathcal{A}/H)\le W_i(\mathcal{A}/H')\quad\And\quad r(\mathcal{A}/H)\le r(\mathcal{A}/H').
\]
\end{corollary}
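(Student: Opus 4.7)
The plan is to reduce all three inequalities to \autoref{comparison-1} by relating the restriction $\mathcal{A}/H$ to the one-element extension $\mathcal{A}+H$. The comparisons for $w_i^+$ and $r$ follow from coefficient extraction in the deletion-restriction identity for $\chi$, while the comparison for $W_k$ requires restricting the rank-preserving injection built in the proof of \autoref{comparison-1} to the sub-semilattice of flats contained in the added hyperplane.

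Assuming $H\notin\mathcal{A}$ (otherwise $\mathcal{A}+H=\mathcal{A}$, and the statement reduces to a routine check of the degenerate case), the deletion-restriction formula for the characteristic polynomial gives
\[
\chi(\mathcal{A}/H,t)=\chi(\mathcal{A},t)-\chi(\mathcal{A}+H,t).
\]
Extracting the coefficient of $t^{d-1-i}$ on both sides yields
\[
w_i^+(\mathcal{A}/H)=w_{i+1}^+(\mathcal{A}+H)-w_{i+1}^+(\mathcal{A})\quad\text{for}\quad 0\le i\le d-1,
\]
and evaluating at $t=-1$ together with Zaslavsky's formula (applied in dimension $d$ to $\mathcal{A}$ and $\mathcal{A}+H$, and in dimension $d-1$ to $\mathcal{A}/H$) yields $r(\mathcal{A}/H)=r(\mathcal{A}+H)-r(\mathcal{A})$. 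In both identities the subtracted term does not depend on $H$, so the monotonicity of $w_{i+1}^+(\mathcal{A}+H)$ and $r(\mathcal{A}+H)$ supplied by \autoref{comparison-1} transfers directly to $w_i^+(\mathcal{A}/H)$ and $r(\mathcal{A}/H)$.

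For the Whitney numbers of the second kind I would identify $L(\mathcal{A}/H)$ with the order filter $\{Z\in L(\mathcal{A}+H):Z\subseteq H\}$; under this identification, a codim-$k$ member of $L(\mathcal{A}/H)$ corresponds to a codim-$(k+1)$ member of $L(\mathcal{A}+H)$ that lies in $H$. I would then restrict the rank-preserving injection $\phi:L(\mathcal{A}+H)\to L(\mathcal{A}+H')$ from the proof of \autoref{comparison-1}, but with the basis choice specified: for each flat $Z\subseteq H$ one has $m+1\in J_Z$, and since $\{m+1\}$ is trivially affinely independent it extends to a basis $I_Z$ of $J_Z$ containing $m+1$. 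With this choice, $\phi(Z)=\bigcap_{i\in I_Z}H_{\bm\alpha_i',a_i'}$ carries $H'$ as one of its factors, so $\phi(Z)\subseteq H'$; meanwhile \autoref{NBC-lemma1} guarantees rank preservation, so codim-$(k+1)$ flats go to codim-$(k+1)$ flats. Injectivity on the restricted domain follows from part (ii) of \autoref{NBC-lemma2}, giving $W_k(\mathcal{A}/H)\le W_k(\mathcal{A}/H')$.

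The main technical point is the basis selection when $Z\in L(\mathcal{A})$ already sits inside $H$: here the basis choice is genuinely free, and insisting that $m+1\in I_Z$ is what forces $\phi(Z)$ into $H'$. When $Z\in L(\mathcal{A}+H)\setminus L(\mathcal{A})$ is a genuinely new flat created by $H$, any basis of $J_Z$ must contain $m+1$, so no additional care is needed there. Once this observation is in place, the argument is essentially a transcription of the injectivity portion of the proof of \autoref{comparison-1}, restricted to the relevant sub-semilattice.
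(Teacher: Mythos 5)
Your treatment of the Whitney numbers of the second kind is essentially the paper's own argument: the paper constructs a rank-preserving injection $\theta:L(\mathcal{A}/H)\to L(\mathcal{A}/H')$ directly from \autoref{NBC-lemma1} and \autoref{NBC-lemma3}, and your restricted $\phi$ with the basis of $J_Z$ forced to contain $m+1$ produces the same map; the appeal to \autoref{NBC-lemma2}\ii for injectivity is legitimate, since that lemma allows arbitrary choices of bases and its (implicit) hypothesis $X_{(\bm\alpha,a)}\subseteq X_{(\bm\alpha',a')}$ holds here. For $w_i^+$ and $r$ you take a genuinely different and shorter route: deletion--restriction $\chi(\mathcal{A}/H,t)=\chi(\mathcal{A},t)-\chi(\mathcal{A}+H,t)$, coefficient extraction $w_i^+(\mathcal{A}/H)=w_{i+1}^+(\mathcal{A}+H)-w_{i+1}^+(\mathcal{A})$, $r(\mathcal{A}/H)=r(\mathcal{A}+H)-r(\mathcal{A})$, and then \autoref{comparison-1}; the paper instead proves $\NBC_i(\mathcal{A}/H)\subseteq\NBC_i(\mathcal{A}/H')$ by applying \autoref{NBC-lemma4} with $Y=H$, $Y'=H'$, $I_Y=\{m+1\}$, and then uses \autoref{affine-NBC} and Zaslavsky's formula. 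In the generic case your route is correct and pleasantly avoids the NBC machinery for these two invariants.

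The genuine gap is the case you brush aside. The corollary is stated for arbitrary hyperplanes $H,H'$, and restrictions to hyperplanes of $\mathcal{A}$ itself are precisely the classical objects \autoref{SEC4} is about, so $H\in\mathcal{A}$ (or $H'\in\mathcal{A}$) is not a peripheral degeneracy. There your key identity is simply false: if $H\in\mathcal{A}$ then $\mathcal{A}+H=\mathcal{A}$, so $\chi(\mathcal{A},t)-\chi(\mathcal{A}+H,t)=0\ne\chi(\mathcal{A}/H,t)$; the correct deletion--restriction in that case is $\chi(\mathcal{A}/H,t)=\chi(\mathcal{A}\setminus\{H\},t)-\chi(\mathcal{A},t)$, and \autoref{comparison-1} gives no control whatsoever over the deletion $\mathcal{A}\setminus\{H\}$, so the comparison of $w_i^+$ and $r$ in this case is not a ``routine check'' with the tools you have set up (note also that your formula for $\chi(\mathcal{A}/H',t)$ silently assumes $H'\notin\mathcal{A}$ as well). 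By contrast, your $W_i$ injection never uses $H\notin\mathcal{A}$, and the paper's proof is uniform because \autoref{NBC-lemma4} is applied to the labelled arrangement $\mathcal{A}+H_{\bm\alpha_{m+1},a_{m+1}}$ regardless of coincidences with hyperplanes of $\mathcal{A}$. To close the gap you should either handle $w_i^+$ and $r$ as the paper does (NBC containment, then $r(\mathcal{A}/H)=\sum_i w_i^+(\mathcal{A}/H)$), or supply an actual argument for the case where $H$ or $H'$ lies in $\mathcal{A}$.
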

\begin{proof}
Let $(\bm\alpha_{m+1},a_{m+1})=(\bm\alpha,a)$ and $(\bm\alpha'_{m+1},a'_{m+1})=(\bm\alpha',a')$.  Note that for any hyperplane $H$ in $\mathbb{F}^d$, we have $\mathcal{A}+H/H=\mathcal{A}/H$. It follows from \autoref{NBC-lemma4} that
\[
\NBC_i(\mathcal{A}/H_{\bm\alpha_{m+1},a_{m+1}})\subseteq\NBC_i(\mathcal{A}/H_{\bm\alpha'_{m+1},a'_{m+1}}), \quad i=0,1,\ldots,d-1.
\]
Applying \autoref{affine-NBC}, we obtain $w_i(\mathcal{A}/H_{\bm\alpha_{m+1},a_{m+1}})\le w_i(\mathcal{A}/H_{\bm\alpha'_{m+1},a'_{m+1}})$.
Together with Zaslavsky's formula, we can arrive at
\[r(\mathcal{A}/H_{\bm\alpha_{m+1},a_{m+1}})=\sum_{i=0}^{d-1} w_i^+(\mathcal{A}/H_{\bm\alpha_{m+1},a_{m+1}})\le \sum_{i=0}^{d-1} w_i^+(\mathcal{A}/H_{\bm\alpha'_{m+1},a'_{m+1}})=r(\mathcal{A}/H_{\bm\alpha'_{m+1},a'_{m+1}}).\]
Next we will show that $ W_i(\mathcal{A}/H_{\bm\alpha_{m+1},a_{m+1}})\le W_i(\mathcal{A}/H_{\bm\alpha'_{m+1},a'_{m+1}})$. Recall from \autoref{NBC-lemma1} that we have
\[
\rank_{ \mathcal{A}+H_{\bm\alpha_{m+1},a_{m+1}}}\big(J\cup\{m+1\}\big)\le \rank_{ \mathcal{A}+H_{\bm\alpha'_{m+1},a'_{m+1}}}\big(J\cup\{m+1\}\big),\quad J\subseteq[m].
\]
From \autoref{NBC-lemma3}, we have
\begin{equation}\label{Rank-4}
\rank_{ \mathcal{A}/H_{\bm\alpha_{m+1},a_{m+1}}}(J)\le \rank_{ \mathcal{A}/H_{\bm\alpha'_{m+1},a'_{m+1}}}(J),\quad J\subseteq[m].
\end{equation}
Next we will establish the rank-preserving injection $\theta: L(\mathcal{A}/H_{\bm\alpha_{m+1},a_{m+1}})\to L(\mathcal{A}/H_{\bm\alpha'_{m+1},a'_{m+1}})$.  For any $Y\in L(\mathcal{A}/H_{\bm\alpha_{m+1},a_{m+1}})$, let  $I_Y\subseteq [m]$ such that $\rank_{ \mathcal{A}/H_{\bm\alpha_{m+1},a_{m+1}}}(I_{  Y})=\#I_{  Y}$  and $Y=\bigcap_{i\in I_{  Y}}H_{\bm\alpha_i,a_i}\bigcap H_{\bm\alpha_{m+1},a_{m+1}}$. Define $\theta(Y)=\bigcap_{i\in I_{  Y}}H_{\bm\alpha_i,a_i}\bigcap H_{\bm\alpha'_{m+1},a'_{m+1}}$. It follows from \eqref{Rank-4} that $\#I_{  Y}\ge \rank_{ \mathcal{A}/H_{\bm\alpha'_{m+1},a'_{m+1}}}(I_{  Y})\ge \rank_{ \mathcal{A}/H_{\bm\alpha_{m+1},a_{m+1}}}(I_{  Y})\ge \#I_{  Y}$, which means
\[
\rank_{ \mathcal{A}/H_{\bm\alpha'_{m+1},a'_{m+1}}}(I_{  Y})=\rank_{ \mathcal{A}/H_{\bm\alpha_{m+1},a_{m+1}}}(I_{  Y})=\#I_{  Y}.
\]
This implies $\bigcap_{i\in I_{  Y}}H_{\bm\alpha_i,a_i}\bigcap H_{\bm\alpha'_{m+1},a'_{m+1}}\ne\emptyset$ and $\theta(Y)\in L(\mathcal{A}/H_{\bm\alpha'_{m+1},a'_{m+1}})$, i.e., $\theta$ is well-defined. Moreover, it also indicates that $\theta$ is a rank-preserving map. To verify the injectivity of $\theta$, suppose $\theta(Y_1)=\theta(Y_2)=Y'\in L(\mathcal{A}/H_{\bm\alpha'_{m+1},a'_{m+1}})$ and $I_{  Y_1},I_{  Y_2}\subseteq [m]$ satisfying that
\[
Y_j=\bigcap_{i\in I_{  Y_j}}H_{\bm\alpha_i,a_i}\bigcap H_{\bm\alpha_{m+1},a_{m+1}}\quad \And\quad
\rank_{ \mathcal{A}/H_{\bm\alpha_{m+1},a_{m+1}}}(I_{  Y_j})=\#I_{  Y_j},\quad j=1,2.
\]
Clearly $Y'=\bigcap_{i\in I_{  Y_1}}H_{\bm\alpha_i,a_i}\bigcap H_{\bm\alpha'_{m+1},a'_{m+1}}=\bigcap_{i\in I_{  Y_2}}H_{\bm\alpha_i,a_i}\bigcap H_{\bm\alpha'_{m+1},a'_{m+1}}$. This means that $\bigcap_{i\in I_{Y_1}\cup I_{Y_2}}H_{\bm\alpha_i,a_i}\in L(\mathcal{A})$ and for all $k\in I_{  Y_2}\setminus I_{  Y_1}$,
\[
Y'=\bigcap_{i\in I_{  Y_1}\cup I_{  Y_2}}H_{\bm\alpha_i,a_i}\bigcap H_{\bm\alpha'_{m+1},a'_{m+1}}=\bigcap_{i\in I_{  Y_1}\sqcup\{k\}}H_{\bm\alpha_i,a_i}\bigcap H_{\bm\alpha'_{m+1},a'_{m+1}}.
\]
It yields
\[
\rank_{ \mathcal{A}/H_{\bm\alpha'_{m+1},a'_{m+1}}}(I_{  Y_1})=\rank_{ \mathcal{A}/H_{\bm\alpha'_{m+1},a'_{m+1}}}\big(I_{  Y_1}\cup\{k\}\big),\quad k\in I_{  Y_2}\setminus I_{  Y_1}.
\]
We claim $Y_1=Y_2$. Otherwise, we have $\bigcap_{i\in I_{Y_1}}H_{\bm\alpha_i,a_i}\ne\bigcap_{i\in I_{Y_2}}H_{\bm\alpha_i,a_i}$. It implies that there is $k_1\in I_{  Y_2}\setminus I_{  Y_1}$ such that $\bigcap_{i\in I_{Y_1}\sqcup\{k_1\}}H_{\bm\alpha_i,a_i}\subsetneqq\bigcap_{i\in I_{Y_1}}H_{\bm\alpha_i,a_i}$. Note that $Y'\subseteq \bigcap_{i\in I_{Y_1}\sqcup\{k_1\}}H_{\bm\alpha_i,a_i}$ and $\dim(\bigcap_{i\in I_{Y_1}}H_{\bm\alpha_i,a_i})-\dim(Y')\in\{0,1\}$. So we have $Y'=\bigcap_{i\in I_{Y_1}\sqcup\{k_1\}}H_{\bm\alpha_i,a_i}\in L(\mathcal{A})$. It follows from \autoref{lemma0} that $\bigcap_{i\in I_{  Y_1}\cup I_{Y_2}}H_{\bm\alpha_i,a_i}\bigcap H_{\bm\alpha_{m+1},a_{m+1}}\bigcap L_0(\mathcal{A})=Y'\cap L_0(\mathcal{A})\ne\emptyset$ since $\mathcal{A}$ is essential. Namely, $\bigcap_{i\in I_{  Y_1}\cup I_{Y_2}}H_{\bm\alpha_i,a_i}\bigcap H_{\bm\alpha_{m+1},a_{m+1}}\ne\emptyset$. Together with $Y_1\ne Y_2$, we have that there is $k_2\in I_{  Y_2}\setminus I_{  Y_1}$ such that
\[
\rank_{ \mathcal{A}/H_{\bm\alpha_{m+1},a_{m+1}}}\big(I_{  Y_1}\cup \{k_2\}\big)>\rank_{ \mathcal{A}/H_{\bm\alpha_{m+1},a_{m+1}}}(I_{  Y_1}).
\]
It follows from $\rank_{ \mathcal{A}/H_{\bm\alpha_{m+1},a_{m+1}}}(I_{  Y_1})=\rank_{ \mathcal{A}/H_{\bm\alpha'_{m+1},a'_{m+1}}}(I_{  Y_1})$ that $\rank_{ \mathcal{A}/H_{\bm\alpha_{m+1},a_{m+1}}}\big(I_{  Y_1}\cup \{k_2\}\big)>\rank_{ \mathcal{A}/H_{\bm\alpha'_{m+1},a'_{m+1}}}\big(I_{  Y_1}\cup \{k_2\}\big)$, contradicting \eqref{Rank-4}. Hence, we obtain $Y_1=Y_2$, i.e., $\theta$ is an injection. The injectivity of $\theta$ directly gives rise to $ W_i(\mathcal{A}/H_{\bm\alpha_{m+1},a_{m+1}})\le W_i(\mathcal{A}/H_{\bm\alpha'_{m+1},a'_{m+1}})$, which completes the proof.
\end{proof}

\end{document}